\numberwithin{equation}{section}
\newtheorem{corollary}{Corollary}[section]
\newtheorem{lemma}[corollary]{Lemma}
\newtheorem{proposition}[corollary]{Proposition}
\newtheorem{theorem}[corollary]{Theorem}
\theoremstyle{definition}
\newtheorem{definition}[corollary]{Definition}
\newtheorem{remark}[corollary]{Remark}
\renewcommand{\bar}[1]{\overline{#1}}
\renewcommand{\epsilon}{\varepsilon}
\newcommand{\C}{\mathbb{C}}
\newcommand{\cF}{\mathcal{F}}
\DeclareMathOperator{\ch}{ch}
\newcommand{\fsl}{\mathfrak{sl}}
\newcommand{\g}{\mathfrak{g}}
\newcommand{\gl}{\mathfrak{gl}}
\newcommand{\h}{\mathfrak{h}}
\newcommand{\ind}[2]{\textup{ind}_{#1}^{#2}}
\newcommand{\lie}[1]{\mathfrak{#1}}
\newcommand{\n}{\mathfrak{n}}
\newcommand{\U}{\textup{\bf U}}
\newcommand{\vspan}{\textup{span}_\C}
\DeclareMathOperator{\wt}{wt}
\newcommand{\z}{\mathfrak{z}}
\newcommand{\Z}{\mathbb{Z}}
\begin{document}

\title{Local Weyl modules and fusion products for the current superalgebra $\lie{sl}(1|2)[t]$}

\author{Matheus Brito}
\address{Mathematics Department\\
         Federal University of Paran\'a\\
         Curitiba, Paran\'a, Brazil, 81.531-980}
\email{mbrito@ufpr.br}

\author{Lucas Calixto}
\address{Mathematics Department\\
         Federal University of Minas Gerais\\
		   Belo Horizonte, Minas Gerais, Brazil, 30.123-970}
\email{lhcalixto@ufmg.br}

\author{Tiago Macedo}
\address{Departamento de Ci\^encia e Tecnologia\\
         Universidade Federal de S\~ao Paulo\\
         S\~ao Jos\'e dos Campos, S\~ao Paulo, Brazil, 12.247-014}
\email{tmacedo@unifesp.br}

\begin{abstract}
We study a class of modules, called Chari-Venkatesh modules, for the current superalgebra $\lie{sl}(1|2)[t]$. This class contains  other important modules, such as graded local Weyl, truncated local Weyl and Demazure-type modules. We prove that Chari-Venkatesh modules can be realized as fusion products of generalized Kac modules. In particular, this proves Feigin and Loktev's conjecture, that fusion products are independent of their fusion parameters, in the case where the fusion factors are generalized Kac modules. As an application of our results, we obtain bases, dimension and character formulas for Chari-Venkatesh modules.
\end{abstract}

\maketitle

\tableofcontents

\section*{Introduction}

Representation theory of Lie (super)algebras has been extensively researched for about a century, in particular, due to its relationship with other scientific fields, such as quantum mechanics.  In this paper, we will study certain classes of finite-dimensional representations for current superalgebras.  Our main goal is to provide a better description of modules known as \emph{local Weyl}.  In order to do that, we make use of \emph{fusion products} and \emph{Chari-Venkatesh modules}.

Local Weyl modules were first defined in \cite{CP01} as modules for the loop algebra associated to a semisimple finite-dimensional Lie algebra, in analogy with Weyl modules for reductive algebraic groups over fields of positive characteristics.  In fact, Chari and Pressley defined local Weyl modules via generators and relations, proved that they are universal highest-weight modules, and conjectured that they are in fact the classical limit of certain irreducible modules for the corresponding quantum affine algebra.  Their conjecture was proven in \cite[Theorem~5]{CP01} for $\g = \lie{sl}(2)$, in \cite[Theorem~1.5.1]{CL06} for $\g$ of type $\mathsf A$, in \cite[Corollary~2(1)]{FL07} for $\g$ simply laced, and in \cite[Corollary~9.5(i)]{Naoi12} for the remaining cases.

Simultaneously, the study of local Weyl modules was developed in different directions, as it became clear that they are related to several other interesting objects.  For instance: in \cite{FL04}, the definition of local Weyl modules was generalized to map algebras, and they were related to Catalan numbers; in \cite{CFK10}, they were fit into a categorical viewpoint; in \cite{KN12}, they were related to quiver varieties; and in \cite{BHLW17, SVV17}, they were related to decategorification of quantum groups.

In \cite{CLS19}, local Weyl modules for map superalgebras were first defined.  In that paper, the authors defined local Weyl modules for map superalgebras $\g \otimes_\C A$ where $\g$ is a basic classical Lie superalgebra and $A$ is a finitely-generated unital $\C$-algebra, and proved some of the fundamental properties of these modules. In \cite{BCM19}, the authors extended the definition of local Weyl modules to map superalgebras $\g \otimes_\C A$ with $\g$ being any finite-dimensional simple Lie superalgebra non-isomorphic to $\lie{q}(n)$, and fit these local Weyl modules into a categorical framework similar to the one in \cite{CFK10}. In \cite{FM17} and \cite{Kus18}, the authors focused on the study of local Weyl modules for the current superalgebra $\lie{osp}(1|2)[t]$.  In \cite{FM17}, the authors computed the dimension and characters of such modules and related them to non-symmetric Macdonald polynomials; and in \cite{Kus18}, the author realized local Weyl modules as certain fusion products, provided generators and relations for fusion products, and studied Demazure-type and truncated Weyl modules.

Our goal in this paper is to deepen the understanding of local Weyl modules and fusion products, by studying the case of the current superalgebra $\lie{sl}(1|2)[t]$.  Notice that the Lie superalgebra $\lie{sl}(1|2)$ is the most fundamental simple Lie superalgebra for which the category of finite-dimensional modules is not semisimple.  The category of finite-dimensional $\lie{osp}(1|2)$-modules, on the other hand, is semisimple, and thus the representation theory of $\lie{osp}(1|2)[t]$ behaves in many ways like the representation theory of current Lie algebras.  Further, notice that, since the even part $\lie{sl}(1|2)_{\bar0}$ is isomorphic to $\lie{gl}(2)$, we are able to make very explicit calculations.  We expect that our calculations and techniques enable one to treat the general case of current superalgebras in the future.

An important feature of the representation theory of map superalgebras is their dependence on a choice of Borel subalgebra.  This is due to the fact that, for finite-dimensional simple Lie superalgebras, not all Borel subalgebras are conjugate under the action of the corresponding Weyl group.  In fact, local Weyl modules are universal objects in the category of finite-dimensional $\lie{sl}(1|2)[t]$-modules if and only if one considers certain triangular decompositions, whose highest root is even (see \cite[Corollary~8.16(b)]{BCM19}).  Moreover, for any other choice of triangular decomposition, there exists no objects in the category of finite-dimensional $\lie{sl}(1|2)[t]$-modules satisfying those universal properties (see \cite[Remark~8.15]{BCM19}).  Since, up to conjugation under the action of the Weyl group, there exists a unique triangular decomposition whose highest root is even, we lose no generality in fixing one.  The Borel subalgebra associated to this fixed triangular decomposition is denoted by $\lie{b}_{\scriptscriptstyle(2)}$ and explicitly defined in Section~\ref{ss:good.borel}.

In Section~\ref{s:kac}, we describe the structure of generalized Kac modules associated to the Borel subalgebra $\lie b_{\scriptscriptstyle (2)}$.  In fact, we describe when these generalized Kac modules are irreducible (Proposition~\ref{prop:typ.cond}), when they are non-zero (Proposition~\ref{prop:dim.k2.sl12}), and their $\g_{\bar0}$-module decompositions (Proposition~\ref{prop:kac.g0-mods}).  This description is important due to the fact that local Weyl modules associated to $\lie{b}_{\scriptscriptstyle(2)}$ are finite-dimensional \cite[Theorem~8.7]{BCM19} (in contrast to local Weyl modules associated to $\lie{b}_{\scriptscriptstyle(1)}$ or $\lie{b}_{\scriptscriptstyle(3)}$).  Notice that these results generalize Kac's description of Kac modules associated to distinguished Borel subalgebras \cite[\S2.2]{Kac78}.  In fact, the main tool used to describe the structure of generalized Kac modules associated to $\lie b_{\scriptscriptstyle(2)}$ is Lemma~\ref{lem:kac.iso}, which works for any finite-dimensional simple Lie superalgebra (not just $\lie{sl}(1|2))$.

After that, we fix $\g = \lie{sl}(1|2)$, the Borel subalgebra $\lie b_{\scriptscriptstyle(2)}$, and proceed to studying the associated local Weyl modules in Section~\ref{s:weyl}.  In Proposition~\ref{prop:Weyl_basis}, we construct a generating set for graded local Weyl modules.  This set is proved to be a basis in Theorem~\ref{thm:weyl.dim}, using fusion products.  An analogous result had been obtained for graded local Weyl $\lie{osp}(1|2)[t]$-modules in \cite[Corollary~1.8]{FM17}.  In their case, however, irreducible finite-dimensional $\lie{osp}(1|2)$-modules coincide with Kac modules (since the category of finite-dimensional $\lie{osp}(1|2)$-modules is semisimple).  In our case, that is, when $\g = \lie{sl}(1|2)$, we are forced to look at typical and atypical weights, and, in particular, we prove that local Weyl modules can be realized as fusion products of generalized Kac modules, which are not irreducible if the associated weight is atypical.

One should notice that fusion products are also used in \cite{FM17, Kus18}, as well as an important tool used in \cite{CL06, FL07, Naoi12} to prove Chari and Pressley's conjecture.  These modules were first defined in \cite[Definition~1.7]{FL99} to be graded versions of tensor products of evaluation modules evaluated at different points of $\C$ (known as \emph{fusion parameters}).  It was conjectured in \cite[Conjecture~1.8(i)]{FL99} that the module structure of a fusion product is independent of the choice of the fusion parameters.  This conjecture has been proven in a few cases.  In Corollary~\ref{cor:iso.vcsi}\ref{cor:CV.indep}, we prove that fusion products of generalized Kac modules are independent of the fusion parameters.  In particular, this result proves Feigin and Loktev's conjecture for  irreducible finite-dimensional typical $\lie{sl}(1|2)$-modules.

Moreover, we provide generators and relations for the fusion product of generalized Kac modules in Section~\ref{s:v.csi}.  In fact, we define Chari-Venkatesh modules for $\lie{sl}(1|2)[t]$ in Definition~\ref{defn:vcsi}, and prove that they are isomorphic to certain fusion products of generalized Kac modules in Proposition~\ref{prop:vcsi.onto.kacs} and Theorem~\ref{thm:iso.vcsi}.  As consequences of these results, we obtain bases, dimension and character formulas for Chari-Venkatesh $\lie{sl}(1|2)[t]$-modules in Corollary~\ref{cor:iso.vcsi}.  These results generalize \cite[Theorem~5]{CV15} (where $\g$ is $\lie{sl}(2)$) and \cite[Corollary~4.8]{Kus18} (where $\g$ is $\lie{osp}(1|2)$).

Particular cases of Chari-Venkatesh modules are graded local Weyl modules (see Lemma~\ref{lem:vcsi.basc}\ref{lem:vcsi.basc.b}).  In particular, Corollary~\ref{cor:iso.vcsi} yields bases, dimension and character formulas for graded local Weyl modules.  But in Corollary~\ref{cor:weyl.dim}, we also construct bases, prove dimension and character formulas for a larger class of local Weyl modules.  A key ingredient in the proof of Corollary~\ref{cor:weyl.dim} is Lemma~\ref{lem:dim.wpsi}.  Unfortunately, the argument used in the proof of Lemma~\ref{lem:dim.wpsi} is not valid for all local Weyl $\lie{sl}(1|2)[t]$-modules.  However, if one replaces $\lie{sl}(1|2)$ by $\lie{osp}(1|2)$, a similar argument would prove Lemma~\ref{lem:dim.wpsi}, Theorem~\ref{thm:gr.wpsi.weyl} and Corollary~\ref{cor:weyl.dim} for \emph{all} local Weyl $\lie{osp}(1|2)[t]$-modules (not just the graded ones), generalizing \cite[Corollary~1.8]{FM17}.  The authors believe that these results are yet unknown for $\lie{osp}(1|2)[t]$, and even for current Lie algebras.

We finish the paper with applications of our results to two classes of Chari-Venkatesh modules: Demazure-type modules and truncated local Weyl modules.  In fact, we first describe them in terms of fusion products (or, equivalently, Chari-Venkatesh modules) in Propositions~\ref{prop:dem.vcsi} and \ref{prop:tr.weyl.iso.fusion}, and then compute bases, dimension and character formulas in Corollaries~\ref{cor:app.dem} and \ref{cor:app.tr.weyl}.  Truncated local Weyl modules are universal objects in the category of finite-dimensional modules for truncated current superalgebras (see Proposition~\ref{prop:tr.weyl.univ}), and Demazure-type modules are expected to be isomorphic to modules for a certain Borel subgroup of the affine supergroup associated to ${\rm SL}(1|2)$ generated by extremal weight vectors (see Proposition~\ref{prop:dem.lw}).

\subsection*{Notation}  Throughout this paper, we will denote by $\C$ the set of complex numbers, by $\Z$ the set of integer numbers, by $\Z_{\ge0}$ the set of non-negative integers, and by $\Z_{>0}$ the set of positive integers.  Moreover, all vector spaces, algebras and tensor products will be assumed to be taken over $\C$, unless it is explicitly said otherwise. For a given Lie superalgebra ${\lie a}$ we let $\U({\lie a})$ denote its universal enveloping superalgebra.

\section{Generalities on \texorpdfstring{$\mathfrak{sl}(1|2)$}{sl(1|2)}} 
\label{s:sl(1|2)}

The Lie superalgebra $\g = \lie{sl}(1|2)$ is a finite-dimensional simple Lie superalgebra whose elements are $3 \times 3$ complex matrices of the form
\[
\g = \left\{ 
\left.
A = \left(
\begin{array}{c|cc}
a & x & y \\
\hline
z & b & c \\
w & d & e
\end{array}
\right)
\right| {\rm str}(A) := a - (b+e)=0 \right\}.
\]
Its even and odd parts are given by
\[
\g_{\bar0} = 
\left\{
\left.
\left(
\begin{array}{c|cc}
a+d & 0 & 0 \\
\hline
0 & a & b \\
0 & c & d
\end{array}
\right)
\right| a, b, c, d \in \C
\right\}
\quad \textup{and} \quad
\g_{\bar1} = \g_{1} \oplus \g_{-1},
\]
where
\[
\g_1 = 
\left\{
\left.
\left(
\begin{array}{c|cc}
0 & x & y \\
\hline
0 & 0 & 0 \\
0 & 0 & 0
\end{array}
\right)
\right| x, y \in \C
\right\}
\quad \textup{and} \quad
\g_{-1} = 
\left\{
\left.
\left(
\begin{array}{c|cc}
0 & 0 & 0 \\
\hline
z & 0 & 0 \\
w & 0 & 0
\end{array}
\right)
\right| z, w \in \C
\right\}.
\]
And its Lie bracket is defined by linearly extending:
\[
[A, B] = AB - (-1)^{\bar i \, \bar j} BA
\qquad \textup{ for all } A \in \g_{\bar i},\ B \in \g_{\bar j},\ \bar i, \bar j \in \Z_2.
\]
Moreover, $\g = \g_{-1} \oplus \g_0 \oplus \g_{1}$, with $\g_0 = \g_{\bar0}$, is a $\Z$-grading of $\g$.

Notice that $\g_0\cong \gl(2)$ and $\g_0':=[\g_0,\g_0]\cong \fsl(2)$. Thus, $\g_0$ is a reductive (non-semisimple) Lie algebra, and we can choose a triangular decomposition $\g_0 = \n_0^- \oplus \h \oplus \n_0^+$ corresponding to the usual triangular decomposition of $\gl(2)$, namely:
\[
\n_0^- = \vspan \{y_2:=E_{3,2}\},
\quad
\n_0^+ =  \vspan \{x_2:= E_{2,3}\}
\quad \textup{and} \quad
\h = \h' \oplus \z,
\]
where
\[
\h' =  \vspan \{h_2:=E_{2,2} - E_{3,3} \}
\quad \textup{and} \quad
\z =  \vspan \{z:=2E_{1,1} + E_{2,2} + E_{3,3} \}.
\]
In fact, $\h$ is a Cartan subalgebra of $\g_0$ and $\z$ is its center.  Moreover, $\lie b_0 := \h \oplus \n_0^+$ is a Borel subalgebra of $\g_0$.

A Cartan subalgebra of $\g$ is defined to be a Cartan subalgebra of $\g_0$.  Fix the Cartan subalgebra $\h \subseteq \g$ above, a basis for $\h$ consisting of $\{h_1 := E_{1,1} + E_{2,2},\ h_2\}$, and define $h_3 := E_{1,1} + E_{3,3}$.

The adjoint action of $\h$ on $\g$ induces the root space decomposition $\g = \bigoplus_{\alpha \in \h^*} \g^\alpha$, where $\g^\alpha$ is defined to be $\{ x \in \g \mid [h,x] = \alpha(h)x, \textup{ for all $h \in \h$} \}$.  The set of roots $R := \{ \alpha \in \h^*\setminus \{0\} \mid \g^\alpha \neq 0 \}$, consists of
\[
\pm \alpha_1 = \pm (\delta - \varepsilon_1), \quad
\pm \alpha_2 = \pm (\varepsilon_1 - \varepsilon_2)
\quad \textup{and} \quad
\pm \alpha_3 = \pm (\delta - \varepsilon_2),
\]
where $\delta, \epsilon_1, \epsilon_2$ are the unique linear maps that satisfy
	\[
\delta \left(
\begin{array}{c|cc}
a & 0 & 0 \\
\hline
0 & b_1 & 0 \\
0 & 0 & b_2
\end{array}
\right) = a,
\quad 
\epsilon_1 \left(
\begin{array}{c|cc}
a & 0 & 0 \\
\hline
0 & b_1 & 0 \\
0 & 0 & b_2
\end{array}
\right) = b_1,
\quad
\epsilon_2 \left(
\begin{array}{c|cc}
a & 0 & 0 \\
\hline
0 & b_1 & 0 \\
0 & 0 & b_2
\end{array}
\right) = b_2.
	\]
In particular, $\g^{\alpha_i}=\C x_i$, $\g^{-\alpha_i}=\C y_i$, where $x_1:=E_{1,2},\ x_2:=E_{2,3},\ x_3:=E_{1,3}$, and $y_i:=x_i^t$ (the transpose of $x_i$).

Recall that $\g$ admits a non-degenerate even invariant super-symmetric bilinear form $(\cdot \mid \cdot)$ given by the super trace, that is, $(A \mid B)={\rm str}(AB)$ for all $A,B\in \g$. Such form is non-degenerate when restricted to $\h$ and hence it naturally induces a non-degenerate form on $\h^*$.

For each $\lambda \in \h^*$, denote $\lambda(h_i)$ by $\lambda_i$, and identify $\lambda$ with $(\lambda_1, \lambda_2) \in \C^2$. (Since $\{h_1, h_2\}$ is a basis for $\h$, every $\lambda \in \h^*$ is uniquely determined by the pair $(\lambda_1, \lambda_2) \in \C^2$.)  Alternatively, we sometimes identify $\lambda$ with $(\kappa; a, b)\in \C^3$, where $\lambda = \kappa\delta + a\varepsilon_1 + b\varepsilon_2$.  For each $\lambda = (\lambda_1, \lambda_2) \in \h^*$, let $L_{\gl}(\lambda)$ denote the irreducible $\g_0$-module with ${\lie b}_0$-highest weight $\lambda$, and $L_{\fsl}(\lambda_2)$ denote the irreducible $\g_0'$-module with $({\lie b}_0 \cap \g_0')$-highest weight $\lambda \arrowvert_{\h'} \in (\h')^*$.

\subsection{Distinguished sets of roots} \label{ss:good.borel}

There exist a few choices of sets of simple and positive roots for $\g$ (given that $\h$ is fixed).   A set of simple roots is called \emph{distinguished} if it has only one odd root.

Fix the following set of simple roots: $\Pi_{\scriptscriptstyle (1)} = \{ \alpha_1, \alpha_2 \} \subseteq R$.  Since $\alpha_3 = \alpha_1 + \alpha_2$, the set of positive roots associated to $\Pi_{\scriptscriptstyle (1)}$ is $R_{\scriptscriptstyle (1)}^+ = \{ \alpha_1, \alpha_2, \alpha_3 \}$.  Also notice that $\alpha_1$ and $\alpha_3$ are isotropic odd roots, i.e., $\alpha_1 (h_1) = \alpha_3 (h_3) = 0$ and $\g^{\alpha_1}, \g^{\alpha_3} \subseteq \g_1$. Associated to this choice of simple roots, we have the triangular decomposition $\g = \n_{\scriptscriptstyle (1)}^- \oplus \h \oplus \n_{\scriptscriptstyle (1)}^+$, where $\n_{\scriptscriptstyle (1)}^\pm := \bigoplus_{\alpha \in R_{\scriptscriptstyle (1)}^+} \g^{\pm \alpha} = \g^{\pm \alpha_1} \oplus \g^{\pm \alpha_2} \oplus \g^{\pm \alpha_3}$.  Denote by $\lie b_{\scriptscriptstyle (1)}$ the associated (positive) Borel subalgebra $\h \oplus \n_{\scriptscriptstyle (1)}^+$.

There is another important choice of sets simple and positive roots for $\g$ that satisfies the following condition: the lowest root of $\g$ with respect to this set of positive roots is even. Here, a root $\alpha\in R$ is called lowest if $\alpha -\beta\notin R$ for any $\beta\in R^+$. Choose the set of simple roots to be $\Pi_{\scriptscriptstyle (2)} = \{ -\alpha_1, \alpha_3 \}$.  Since $\alpha_2 = \alpha_3 - \alpha_1$, the set of positive roots associated to $\Pi_{\scriptscriptstyle (2)}$ is $R_{\scriptscriptstyle (2)}^+ = \{ -\alpha_1, \alpha_2, \alpha_3 \}$. Notice that the lowest root of $\g$ associated to $R_{\scriptscriptstyle (2)}^+$ is $-\alpha_2$.  Associated to this choice of simple roots, we have the triangular decomposition $\g = \n_{\scriptscriptstyle (2)}^- \oplus \h\oplus \n_{\scriptscriptstyle (2)}^+$, where $\n_{\scriptscriptstyle (2)}^\pm := \bigoplus_{\alpha \in R_{\scriptscriptstyle (2)}^+} \g^{\pm \alpha} = \g^{\mp \alpha_1} \oplus \g^{\pm \alpha_2} \oplus \g^{\pm \alpha_3}$.  Denote by $\lie b_{\scriptscriptstyle (2)}$ the associated (positive) Borel subalgebra $\h \oplus \n_{\scriptscriptstyle (2)}^+$.

\subsection{Odd reflections} \label{ss:odd.refs}

The set of simple roots $\Pi_{\scriptscriptstyle (2)}$ can be obtained from $\Pi_{\scriptscriptstyle (1)}$ by applying an odd reflection.  In this subsection we describe this general construction.

Let $R$ be a set of roots.  Given a choice of simple roots $\Pi \subset R$, to each odd isotropic simple root $\alpha \in \Pi$, one can define an odd reflection $r_{\alpha} \colon \Pi \to R$ by
\[
r_{\alpha} (\beta) =
\begin{cases}
-\alpha, & \textup{if } \alpha=\beta \\
\beta, & \textup{if $\alpha\neq \beta$ and $\alpha (h_\beta)=\beta(h_\alpha) = 0$} \\
\beta + \alpha, & \textup{if either $\alpha (h_\beta)\neq 0$ or $\beta(h_\alpha) \ne 0$}.
\end{cases}
\]
For instance, if we choose $\Pi = \Pi_{\scriptscriptstyle (1)}$ and $\alpha = \alpha_1$, we obtain $r_{\alpha_1} (\Pi_{\scriptscriptstyle (1)}) = \Pi_{\scriptscriptstyle (2)}$.

Now, if we choose $\Pi = \Pi_{\scriptscriptstyle (2)}$ and $\alpha = \alpha_3$, then $\Pi_{\scriptscriptstyle (3)} = r_{\alpha_3} (\Pi_{\scriptscriptstyle (2)}) = \{ \alpha_2, -\alpha_3 \}$.  Since $\alpha_2 - \alpha_3 = - \alpha_1$, the set of positive roots associated to $\Pi_{\scriptscriptstyle (3)}$ is $R^+_3 = \{ - \alpha_1, \alpha_2, -\alpha_3 \}$.  Thus, the corresponding triangular decomposition $\g = \n^-_{\scriptscriptstyle (3)} \oplus \h \oplus \n^+_{\scriptscriptstyle (3)}$ is such that $\n^\pm_{\scriptscriptstyle (3)} = \n_0^\pm \oplus \g_{\mp 1}$.  Denote by $\lie b_{\scriptscriptstyle (3)}$ the Borel subalgebra associated to this choice of simple roots, $\lie b_{\scriptscriptstyle (3)} = \h \oplus \n_{\scriptscriptstyle (3)}^+$.  Notice that $\Pi_{\scriptscriptstyle (3)}$ is also a distinguished set of simple roots, since its only odd root is $-\alpha_3$.

It can be shown that all possible choices of simple roots for $\g$ (given that $\h$ is fixed) are $\pm \Pi_{\scriptscriptstyle (1)}$, $\pm\Pi_{\scriptscriptstyle (2)}$ and $\pm\Pi_{\scriptscriptstyle (3)}$.  Notice that, in the particular cases where $\Pi \in \{\Pi_{\scriptscriptstyle(1)}, \Pi_{\scriptscriptstyle(2)}, \Pi_{\scriptscriptstyle(3)}\}$, the corresponding Borel subalgebras have the same even part, namely, ${\lie b}_0$.

\section{Generalized Kac modules} \label{s:kac}

Let $\g = \lie{sl}(1|2)$ and $\h$ be the Cartan subalgebra of the reductive Lie algebra $\g_0$ chosen in Section~\ref{s:sl(1|2)}. A $\g$-module $V$ is a $\Z_2$-graded vector space $V=V_0\oplus V_1$ such that $\g_iV_j\subseteq V_{i+j}$ for all $i,j\in \Z_2$. A homomorphism between $\g$-modules $V,W$ is a linear map $f:V\to W$ such that $f(xv)=xf(v)$ for all $x\in \g$ and $v\in V$. Notice that we do not ask for homomorphisms between $\g$-modules to preserve parity. Recall that every irreducible finite-dimensional $\g$-module is a highest-weight module (with respect to any choice of Borel subalgebra), with a highest weight $\lambda$ for some $\lambda \in \h^*$.  Given a Borel subalgebra $\lie b \subseteq \g$ and $\lambda \in \h^*$, let $V_{\lie b} (\lambda)$ denote the irreducible $\g$-module of highest weight $\lambda$, and define
\[
P^+_\lie b = \{ \lambda \in \h^* \mid V_\lie b (\lambda) \textup{ is finite dimensional} \}.
\]

Notice that $P^+_\lie b$ depends on the choice of $\lie b$.  For instance, we have that $V_{\lie b_{\scriptscriptstyle (1)}}(\delta)$ is the natural 3-dimensional $\g$-module, and hence $\delta \in P^+_{\lie b_{\scriptscriptstyle (1)}}$.  However, $\delta \not\in P^+_{\lie b_{\scriptscriptstyle (2)}}$.  In fact, there exists an explicit relation between irreducible $\g$-modules associated to different Borel subalgebras, given by \cite[Lemma~10.2]{Ser11}.  In particular, for $\lie b_{\scriptscriptstyle (1)}$ and $\lie b_{\scriptscriptstyle (2)}$, we have that
\begin{equation} \label{eq:iso.irreps}
V_{\lie b_{\scriptscriptstyle (1)}}(\lambda) \cong
\begin{cases}
V_{\lie b_{\scriptscriptstyle (2)}}(\lambda), & \textup{if $\lambda_1= 0$}, \\
V_{\lie b_{\scriptscriptstyle (2)}}(\lambda-\alpha_1), & \textup{if $\lambda_1\neq 0$},
\end{cases}
\quad \textup{ and } \quad
V_{\lie b_{\scriptscriptstyle (2)}}(\lambda) \cong
\begin{cases}
V_{\lie b_{\scriptscriptstyle (1)}}(\lambda), & \textup{if $\lambda_1= 0$}, \\
V_{\lie b_{\scriptscriptstyle (1)}}(\lambda+\alpha_1), & \textup{if $\lambda_1\neq 0$}.
\end{cases}
\end{equation}
Since $\delta (h_1) \ne 0$, by \eqref{eq:iso.irreps}, there is an isomorphism of $\g$-modules $V_{\lie b_{\scriptscriptstyle (2)}} (\delta) \cong V_{\lie b_{\scriptscriptstyle (1)}} (\delta+\alpha_1)$.  Now, notice that a highest-weight generator of $V_{\lie b_{\scriptscriptstyle (1)}} (\delta+\alpha_1)$ also generates a highest-weight submodule for the Lie subalgebra $\g'_0 \cong \lie{sl}(2)$ of highest weight $(\delta + \alpha_1)\arrowvert_{\h'}$.  Since $(\delta + \alpha_1)(h_2) = -1$, we see that this submodule is not finite dimensional.  Thus $V_{\lie b_{\scriptscriptstyle (2)}} (\delta) \cong V_{\lie b_{\scriptscriptstyle (1)}} (\delta+\alpha_1)$ is not finite dimensional.

We now recall the definition of generalized Kac modules, given in \cite[Definition~4.1]{BCM19}, for the particular case where $\g = \lie{sl}(1|2)$.  In order to do that, choose a triangular decomposition $\g = \lie n^- \oplus \h \oplus \lie n^+$ and, thus, a Borel subalgebra $\lie b = \h \oplus \n^+$.  

\begin{definition}
For each $\lambda \in P^+_\lie b$, define the \emph{generalized Kac module} associated to $\lambda$ to be the cyclic $\g$-module
	\[
K_{\lie b}(\lambda):=\U(\g)/\langle \lie n^+,\ h-\lambda(h),\ y_2^{\lambda_2+1}\rangle,
	\]
where the action of $\g$ on $K_{\lie b}(\lambda)$ is induced from the left multiplication of $\U(\g)$.  Denote by $k_\lambda$ the image of $1 \in \U(\g)$ onto the quotient. Notice that $k_\lambda$ is a homogeneous even vector, and that $K_{\lie b}(\lambda)$ is generated by $k_\lambda$ with defining relations
	\[
\lie n^+ k_\lambda=0, \qquad
h k_\lambda=\lambda(h)k_\lambda\quad \textup{for all $h\in \h$},
\qquad \textup{and} \qquad
y_2^{\lambda_2+1}k_\lambda=0.
	\]
\end{definition}

It was shown in \cite[Proposition~4.8]{BCM19} that the generalized Kac module $K_{\lie b}(\lambda)$ is a universal object in category of finite-dimensional $\g$-modules with $\lie b$-highest weight $\lambda\in \h^*$.  In particular, for each $\lambda \in P^+_{\lie b}$, we have that $V_{\lie b} (\lambda)$ is a quotient of $K_{\lie b} (\lambda)$. Notice that the $\Z_2$-grading on $V_{\lie b} (\lambda)$ is that induced from the $\Z_2$-grading of $K_{\lie b} (\lambda)$.

In the particular case where $\lie b$ is distinguished, the following result regarding the structure of Kac modules is known.

\begin{proposition}[{\cite[\S2.2]{Kac78}}] \label{prop:Kac_dist_properties}
Let $\lambda \in P^+_{\lie b}$, $\lie b \in \{\lie b_{\scriptscriptstyle(1)}, \lie b_{\scriptscriptstyle(3)}\}$.

\begin{enumerate}[(a), leftmargin=*]
\item \label{prop:Kac_dist_properties_1}
$K_{\lie b}(\lambda)$ is irreducible if and only if $\lambda$ is typical, that is,  
\begin{equation} \label{eq:typical.conditions}
( \lambda + \rho \mid \alpha_1 ) \ne 0 \quad \textup{and} \quad ( \lambda + \rho \mid \alpha_3 ) \ne 0,
\end{equation}
where $\rho = \varepsilon_1 - \delta$.

\item \label{prop:Kac_dist_properties_2}
There exist an isomorphism of $\g$-modules $K_{\lie b_{\scriptscriptstyle (1)}} (\lambda) \cong \ind{\lie g_0 \oplus \lie g_1}{\g} L_{\gl}(\lambda)$ and an isomorphism of $\g_0$-modules $K_{\lie b_{\scriptscriptstyle (1)}} (\lambda) \cong \Lambda(\g_{-1}) \otimes_\C L_{\gl}(\lambda)$, where $\Lambda(\g_{-1})$ is the Grassmann algebra associated to the $\g_0$-module $\g_{-1}$.  Similarly, there exist an isomorphism of $\g$-modules $K_{\lie b_{\scriptscriptstyle(3)}} (\lambda) \cong \ind{\lie g_0 \oplus \lie g_{-1}}{\g} L_{\gl}(\lambda)$ and an isomorphism of $\g_0$-modules $K_{\lie b_{\scriptscriptstyle(3)}} (\lambda) \cong \Lambda(\g_1) \otimes_\C L_{\gl}(\lambda)$, where $\Lambda(\g_1)$ is the Grassmann algebra associated to the $\g_0$-module $\g_1$.

\item \label{prop:Kac_dist_properties_3} $P_{\lie b_{\scriptscriptstyle (1)}}^+ = P_{\lie b_{\scriptscriptstyle (3)}}^+ = \{\lambda \in \h^* \mid \lambda_2 \in \Z_{\geq 0}\}$.
\end{enumerate}
\end{proposition}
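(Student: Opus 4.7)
The plan is to prove parts \ref{prop:Kac_dist_properties_2}, \ref{prop:Kac_dist_properties_3}, \ref{prop:Kac_dist_properties_1} in this order, since the induced-module description of \ref{prop:Kac_dist_properties_2} drives both the dimension count and the search for singular vectors. I focus throughout on $\lie b = \lie b_{\scriptscriptstyle(1)}$; the argument for $\lie b_{\scriptscriptstyle(3)}$ is completely symmetric, exchanging $\g_1$ and $\g_{-1}$.

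For \ref{prop:Kac_dist_properties_2}, observe first that $\g_0 \oplus \g_1$ is a parabolic subalgebra of $\g$ containing $\lie b_{\scriptscriptstyle(1)}$, and that $L_{\gl}(\lambda)$ becomes a $(\g_0 \oplus \g_1)$-module by declaring $\g_1$ to act by zero. The defining relations of $K_{\lie b_{\scriptscriptstyle(1)}}(\lambda)$ are then precisely those of $\ind{\g_0 \oplus \g_1}{\g} L_{\gl}(\lambda)$, since $L_{\gl}(\lambda) \cong \U(\g_0)/\langle x_2,\, h - \lambda(h),\, y_2^{\lambda_2+1} \rangle$ by the standard $\gl(2)$-presentation, so the universal property of induction yields the first $\g$-isomorphism. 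Applying the PBW theorem for Lie superalgebras to the decomposition $\g = \g_{-1} \oplus (\g_0 \oplus \g_1)$ then gives the $\g_0$-module identification $K_{\lie b_{\scriptscriptstyle(1)}}(\lambda) \cong \U(\g_{-1}) \otimes L_{\gl}(\lambda)$, and since $[\g_{-1}, \g_{-1}] \subseteq \g_{-2} = 0$, the superalgebra $\U(\g_{-1})$ coincides with $\Lambda(\g_{-1})$ as $\g_0$-modules.

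Part \ref{prop:Kac_dist_properties_3} is now quick. When $\lambda_2 \in \Z_{\geq 0}$, part \ref{prop:Kac_dist_properties_2} gives $\dim K_\lie b(\lambda) = 4(\lambda_2 + 1)$, so the quotient $V_\lie b(\lambda)$ is finite-dimensional. Conversely, the $\g_0'$-submodule of $V_\lie b(\lambda)$ generated by its highest-weight vector is an $\fsl(2)$-highest-weight module of weight $\lambda_2$, so finite-dimensionality forces $\lambda_2 \in \Z_{\geq 0}$.

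Part \ref{prop:Kac_dist_properties_1} is the substantive step. I would reduce it to determining when $K_\lie b(\lambda)$ admits a nonzero singular vector outside $\C k_\lambda$, as any proper submodule must contain such a vector by highest-weight theory. Using the $\g_0$-decomposition of \ref{prop:Kac_dist_properties_2}, every candidate is a $\g_0$-highest-weight vector of a Clebsch-Gordan summand of $\Lambda(\g_{-1}) \otimes L_{\gl}(\lambda)$, so the condition $x_2 v = 0$ is automatic and only annihilation by the odd raising operators $x_1, x_3$ remains to test. A direct computation using the supercommutation relations in $\g$ shows that $y_1 k_\lambda$ is $\g$-singular exactly when $\lambda_1 = (\lambda+\rho, \alpha_1) = 0$, and that the Clebsch-Gordan highest-weight vector of the summand $L_{\gl}(\lambda - \alpha_3)$ appearing inside the $\Lambda^1(\g_{-1}) \otimes L_{\gl}(\lambda)$ layer of $K_\lie b(\lambda)$ --- namely $\lambda_2 \, y_3 k_\lambda - y_1 y_2 k_\lambda$ when $\lambda_2 > 0$, replaced by the top vector $y_1 y_3 k_\lambda$ when $\lambda_2 = 0$ --- is $\g$-singular exactly when $(\lambda+\rho, \alpha_3) = 0$; no further singular vectors arise outside these two cases. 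Conversely, when both typicality conditions in \eqref{eq:typical.conditions} hold, the absence of nonzero singular vectors outside $\C k_\lambda$ combined with a standard descending argument along the $\Z$-grading yields irreducibility. The main obstacle is isolating the Clebsch-Gordan highest-weight vector explicitly inside the two-dimensional $\lambda - \alpha_3$ weight space and checking simultaneously its annihilation by both $x_1$ and $x_3$; fortunately, since $\dim \g_{-1} = 2$ only a handful of weight vectors arise and the arithmetic remains tractable.
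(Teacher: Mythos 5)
The paper does not actually prove this proposition: it is imported wholesale from \cite[\S2.2]{Kac78}, so there is no internal argument to compare yours against. Your self-contained proof is essentially Kac's original strategy specialized to $\fsl(1|2)$, and it is sound. The induced-module identification in \ref{prop:Kac_dist_properties_2} via the presentation $L_{\gl}(\lambda)\cong\U(\g_0)/\langle x_2,\,h-\lambda(h),\,y_2^{\lambda_2+1}\rangle$ and PBW is correct (one should note that $\g_1\,\U(\g_0)k_\lambda=0$, which follows from $[x_1,y_2]=0$ and $[x_3,y_2]=x_1$, so that $\U(\g_0)k_\lambda$ really is a $(\g_0\oplus\g_1)$-module with trivial $\g_1$-action); the dimension count $4(\lambda_2+1)$ then gives \ref{prop:Kac_dist_properties_3}. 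For \ref{prop:Kac_dist_properties_1}, your two singular-vector computations check out: $x_1y_1k_\lambda=\lambda(h_1)k_\lambda=(\lambda+\rho,\alpha_1)k_\lambda$ with $x_2,x_3$ killing $y_1k_\lambda$ automatically, and for $v=\lambda_2\,y_3k_\lambda-y_1y_2k_\lambda$ one finds $x_1v=-(\lambda+\rho,\alpha_3)\,y_2k_\lambda$ and $x_3v=\lambda_2(\lambda+\rho,\alpha_3)\,k_\lambda$, exactly as you assert.

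Two small points you should tighten. First, there is a mild ordering issue: the presentation of $L_{\gl}(\lambda)$ used in \ref{prop:Kac_dist_properties_2} already requires $\lambda_2\in\Z_{\ge0}$, which you only extract from $\lambda\in P^+_{\lie b}$ inside part \ref{prop:Kac_dist_properties_3}; just record the implication $\lambda\in P^+_{\lie b}\Rightarrow\lambda_2\in\Z_{\ge0}$ (the restriction-to-$\g_0'$ argument) up front. Second, the claim that ``no further singular vectors arise'' needs one more explicit check that your sketch skips when $\lambda_2>0$: the top-layer candidate $y_1y_3k_\lambda$, the $\g_0$-highest-weight vector of the summand $L_{\gl}(\lambda_1-1,\lambda_2)$. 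There $x_1y_1y_3k_\lambda=(\lambda_1-1)y_3k_\lambda-y_1y_2k_\lambda$, which is never zero for $\lambda_2>0$ because $y_3k_\lambda$ and $y_1y_2k_\lambda$ are PBW-independent; only for $\lambda_2=0$ does it degenerate into the $(\lambda+\rho,\alpha_3)$-candidate you describe. Since the four summands of $\Lambda(\g_{-1})\otimes L_{\gl}(\lambda)$ have pairwise distinct highest weights, every $\g$-singular vector must lie in a single summand, so this exhausts the candidates and the argument closes. With those two repairs the proof is complete and, unlike the paper, self-contained.
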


For the rest of this section, we will prove analogs of the results of Proposition~\ref{prop:Kac_dist_properties} for the Borel subalgebra $\lie b_{\scriptscriptstyle(2)}$.  Our main goal is to describe the structure of generalized Kac modules associated to the Borel subalgebra $\lie b_{\scriptscriptstyle(2)}$, as this information will be used in Sections~\ref{s:weyl}--\ref{s:dem.tr.weyl}.

We begin with a lemma, which is an analog of \eqref{eq:iso.irreps} for generalized Kac modules, and will be used throughout the rest of this section.  Notice that its statement and proof are valid for any finite-dimensional simple Lie superalgebra; that is, for this lemma, $\g$ does not have to be $\lie{sl}(1|2)$.

\begin{lemma} \label{lem:kac.iso}
Let $\Pi$ be a set of simple roots for $\g$, let $\lie b \subseteq \g$ be the Borel subalgebra associated to $\Pi$, let $\lambda \in P^+_\lie b$, let $\alpha \in \Pi$ be an odd isotropic root, $\Pi'=\{ r_{\alpha} (\beta) \mid \beta \in \Pi \}$ and $\lie b' = \h \oplus {\n'}^+$ be the Borel subalgebra of $\g$ associated to $\Pi'$. If $\lambda (h_\alpha) \ne 0$ there is an isomorphism of $\g$-modules $K_{\lie b} (\lambda) \cong K_{\lie b'} (\lambda - \alpha)$.
\end{lemma}
\begin{proof}
This proof is similar to that of \cite[Lemma~10.2]{Ser11}. Let $k$ (resp. $k'$) denote a highest-weight generator of $K_{\lie b} (\lambda)$ (resp. $K_{\lie b'} (\lambda - \alpha)$).  Notice that, with respect to $\lie b'$, $y_\alpha k$ is a highest-weight generator of weight $\lambda - \alpha$ for $K_{\lie b} (\lambda)$.
Thus, by \cite[Propositions~4.4(a) and 4.8]{BCM19}, $K_{\lie b} (\lambda)$ and $K_{\lie b'} (\lambda - \alpha)$ are finite-dimensional $\g$-modules and there exists a uniquely defined homomorphism of $\g$-modules satisfying
\[
\phi \colon K_{\lie b'} (\lambda - \alpha) \to K_{\lie b} (\lambda), \quad
\phi (k') = y_\alpha k.
\]
Moreover, $\phi$ is surjective, since $y_\alpha k$ is also a generator of $K_{\lie b} (\lambda)$.

Similarly, we see that, with respect to $\lie b$, $x_\alpha k'$ is a highest-weight generator of weight $\lambda$ for $K_{\lie b'} (\lambda - \alpha)$.  Thus, there exists a uniquely defined (surjective) homomorphism of $\g$-modules satisfying
\[
\phi' \colon K_{\lie b} (\lambda) \to K_{\lie b'} (\lambda - \alpha), \quad
\phi' (k) = x_\alpha k'.
\]
The result  follows from the fact that $\frac{1}{\lambda (h_\alpha)} \phi' = \phi^{-1}$.
\end{proof}

In the next result, we use Lemma~\ref{lem:kac.iso} and Proposition~\ref{prop:Kac_dist_properties}\ref{prop:Kac_dist_properties_1} to prove an analog of Proposition~\ref{prop:Kac_dist_properties}\ref{prop:Kac_dist_properties_1} for the case where $\lie b = \lie b_{\scriptscriptstyle(2)}$.

\begin{proposition} \label{prop:typ.cond}
Let $\lambda \in P_{\lie b_{\scriptscriptstyle (2)}}^+$, and write it as $\lambda = (\kappa ; a, b) \in \C^3$, where $\lambda = \kappa \delta + a \varepsilon_1 + b \varepsilon_2$.  Then $K_{\lie b_{\scriptscriptstyle (2)}}(0)\cong \C$, and for $\lambda\neq 0$, $K_{\lie b_{\scriptscriptstyle (2)}} (\lambda)$ is irreducible if and only if $\kappa+a\neq 0$ and $\kappa+b\neq 0$.
\end{proposition}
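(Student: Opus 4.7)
The plan is to use Lemma~\ref{lem:kac.iso} to reduce the question to the distinguished Borel $\lie b_{\scriptscriptstyle(1)}$, where Proposition~\ref{prop:Kac_dist_properties}\ref{prop:Kac_dist_properties_1} provides the typicality criterion, splitting into cases according to whether $\lambda(h_1)=\kappa+a$ vanishes. Since $\Pi_{\scriptscriptstyle(2)}=r_{\alpha_1}(\Pi_{\scriptscriptstyle(1)})$, the odd reflection $r_{-\alpha_1}$ returns $\Pi_{\scriptscriptstyle(2)}$ to $\Pi_{\scriptscriptstyle(1)}$; noting $h_{-\alpha_1}=-h_1$, the hypothesis of Lemma~\ref{lem:kac.iso} reads $\kappa+a\ne 0$, in which case one obtains $K_{\lie b_{\scriptscriptstyle(2)}}(\lambda)\cong K_{\lie b_{\scriptscriptstyle(1)}}(\lambda+\alpha_1)$. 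Proposition~\ref{prop:Kac_dist_properties}\ref{prop:Kac_dist_properties_1} then says the latter is irreducible iff $(\lambda+\alpha_1+\rho,\alpha_i)\ne 0$ for $i\in\{1,3\}$; using $\alpha_1+\rho=0$, these quantities collapse to $(\lambda,\alpha_1)=\kappa+a$ and $(\lambda,\alpha_3)=\kappa+b$, completing the case $\kappa+a\ne 0$.

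For $\lambda=0$ I would prove $K_{\lie b_{\scriptscriptstyle(2)}}(0)\cong\C$ via an $\fsl(2)$-trick. By PBW, the module is spanned by $k_0$, $x_1k_0$, $y_3k_0$, and $x_1y_3k_0$. A short computation shows $x_2(x_1k_0)=y_2(x_1k_0)=0$ (from $[x_2,x_1]=-x_3$ and $[y_2,x_1]=0$), so $h_2(x_1k_0)=[x_2,y_2](x_1k_0)=0$; but $x_1k_0$ has weight $\alpha_1$ with $\alpha_1(h_2)=-1$, so also $h_2(x_1k_0)=-x_1k_0$, forcing $x_1k_0=0$. A symmetric argument gives $y_3k_0=0$, and then $x_1y_3k_0=[x_1,y_3]k_0-y_3x_1k_0=y_2k_0=0$ follows; hence $K_{\lie b_{\scriptscriptstyle(2)}}(0)=\C k_0$.

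The hardest case is $\lambda\ne 0$ with $\kappa+a=0$, where reducibility must be proved directly. The natural candidate for a proper submodule is $\U(\g)\cdot x_1k_\lambda$: using $\lambda(h_1)=0$, the elements $y_1$, $x_2$, $x_3$ all annihilate $x_1k_\lambda$, making it a $\lie b_{\scriptscriptstyle(2)}$-highest weight vector of weight $\lambda+\alpha_1\ne\lambda$, so the submodule it generates is proper whenever $x_1k_\lambda\ne 0$. The main obstacle is certifying this non-vanishing: Lemma~\ref{lem:kac.iso} does not apply, and $x_1k_\lambda$ satisfies $y_2^{\lambda_2+1}(x_1k_\lambda)=0$ but not the Kac-module relation $y_2^{\lambda_2}(x_1k_\lambda)=0$ at weight $\lambda+\alpha_1$, so the universal property of $K_{\lie b_{\scriptscriptstyle(1)}}(\lambda+\alpha_1)$ does not deliver the required non-zero map directly. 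I would resolve this by a PBW/dimension argument: the spanning set $\{x_1^{i_1}y_2^{i_2}y_3^{i_3}k_\lambda\mid i_1,i_3\in\{0,1\},\ 0\le i_2\le\lambda_2\}$ bounds $\dim K_{\lie b_{\scriptscriptstyle(2)}}(\lambda)\le 4(\lambda_2+1)$, and a comparison with $\dim V_{\lie b_{\scriptscriptstyle(2)}}(\lambda)=\dim V_{\lie b_{\scriptscriptstyle(1)}}(\lambda)$ (via \eqref{eq:iso.irreps}, since $\lambda_1=0$) in the atypical non-zero case yields strict inequality, forcing $x_1k_\lambda\ne 0$ and reducibility.
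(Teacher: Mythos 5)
Your treatment of the case $\kappa+a\ne 0$ coincides with the paper's (odd reflection at $-\alpha_1$, then Kac's typicality criterion with $\alpha_1+\rho=0$), and your direct verification that $K_{\lie b_{\scriptscriptstyle (2)}}(0)\cong\C$ is a correct, more detailed version of what the paper dismisses as obvious. The problem is your third case. The paper never needs a ``hard case'': since $\{h_1,h_3\}$ is a basis of $\h$, the assumption $\lambda\ne 0$ forces $\lambda(h_1)=\kappa+a\ne0$ or $\lambda(h_3)=\kappa+b\ne 0$; in the latter situation one applies Lemma~\ref{lem:kac.iso} to the \emph{other} odd simple root $\alpha_3\in\Pi_{\scriptscriptstyle (2)}$ to get $K_{\lie b_{\scriptscriptstyle (2)}}(\lambda)\cong K_{\lie b_{\scriptscriptstyle (3)}}(\lambda-\alpha_3)$ and again quotes Proposition~\ref{prop:Kac_dist_properties}\ref{prop:Kac_dist_properties_1}. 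So when $\kappa+a=0$ and $\lambda\ne0$ the reduction to a distinguished Borel is still available and delivers the full ``if and only if'' (hence reducibility) with no extra work. You overlooked this and substituted a dimension argument that, as sketched, does not close.

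Concretely: your PBW spanning set gives only the \emph{upper} bound $\dim K_{\lie b_{\scriptscriptstyle (2)}}(\lambda)\le 4(\lambda_2+1)$, and an upper bound cannot force $x_1k_\lambda\ne0$, nor the strict inequality $\dim K_{\lie b_{\scriptscriptstyle (2)}}(\lambda)>\dim V_{\lie b_{\scriptscriptstyle (2)}}(\lambda)$ that reducibility requires; for that you would need a lower bound on $\dim K_{\lie b_{\scriptscriptstyle (2)}}(\lambda)$, which you never produce (the exact value $4\lambda_2$ is Proposition~\ref{prop:dim.k2.sl12}, proved later and by the very $\lie b_{\scriptscriptstyle (3)}$ reduction you are missing). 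The alternative reading --- that $x_1k_\lambda=0$ would cut the spanning set down to the $2(\lambda_2+1)$ elements $y_2^{i_2}y_3^{i_3}k_\lambda$ and contradict the size of $V_{\lie b_{\scriptscriptstyle (2)}}(\lambda)$ --- also fails: for $\lambda_1=0$, $\lambda_2=1$ one has $V_{\lie b_{\scriptscriptstyle (2)}}(\lambda)\cong V_{\lie b_{\scriptscriptstyle (1)}}(\lambda)$ equal to the $3$-dimensional dual natural module, and $3<2(\lambda_2+1)=4$, so no contradiction arises. (Your observation that $x_1k_\lambda$ is a $\lie b_{\scriptscriptstyle (2)}$-singular vector of weight $\lambda+\alpha_1$ generating a proper submodule is correct; only the certification $x_1k_\lambda\ne0$ is unsupported, and it is exactly the nontrivial point.)
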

\begin{proof}
The fact that $K_{\lie b_{\scriptscriptstyle (2)}}(0)\cong \C$ is obvious from the defining relations of generalized Kac modules, since $x_2,x_3,y_1\in \lie n_{(2)}^+$ (see Section \ref{ss:good.borel}), $x_1 = [x_3,y_2]$ and $y_3=[y_2,y_1]$.

Assume now that $\lambda\neq 0$. Recall that a weight $\nu \in P^+_{\lie b_{\scriptscriptstyle (1)}}$ is said to be typical when $K_{\lie b_{\scriptscriptstyle (1)}} (\nu)$ is irreducible.  By Proposition~\ref{prop:Kac_dist_properties}\ref{prop:Kac_dist_properties_1}, $\nu \in P^+_{\lie b_{\scriptscriptstyle (1)}}$ is typical if and only if conditions \eqref{eq:typical.conditions} are valid, where $\rho = - \alpha_1 = \varepsilon_1 - \delta$.

Since $\lambda\neq 0$ and $\{h_1,h_3\}$ is a basis of $\h$, then: either $\lambda_1 = \kappa+a\neq 0$, or $\lambda_3 = \kappa+b\neq 0$.  If $\kappa + a \ne 0$, then $\lambda_1 \ne 0$.  In this case, Lemma~\ref{lem:kac.iso} implies that $K_{\lie b_{\scriptscriptstyle (2)}} (\lambda) \cong_\g K_{\lie b_{\scriptscriptstyle (1)}} (\lambda + \alpha_1)$.  Since $\lambda + \alpha_1 + \rho = \lambda$, then \eqref{eq:typical.conditions} implies that $K_{\lie b_{\scriptscriptstyle (2)}} (\lambda)$ is irreducible if and only if $( \lambda \mid \alpha_1) = \kappa + a \ne 0$ and $(\lambda \mid \alpha_3) = \kappa + b \ne 0$.  The proof for the case $\kappa+b\neq 0$ is similar (replace ${\lie b_{\scriptscriptstyle (1)}}$ by ${\lie b}_{\scriptscriptstyle (3)}$).
\end{proof}

In the next result, we prove an analog of Proposition~\ref{prop:Kac_dist_properties}\ref{prop:Kac_dist_properties_3} and compute the dimension of generalized Kac modules for the Borel subalgebra $\lie b_{\scriptscriptstyle(2)}$.

\begin{proposition} \label{prop:dim.k2.sl12}
$P^+_{\lie b_{\scriptscriptstyle (2)}} = \{ \lambda \in \h^* \mid  \lambda_2 \in \Z_{\ge1} \}\cup \{0\}$, and $\dim K_{\lie b_{\scriptscriptstyle (2)}} (\lambda) = 4 \lambda_2$ for all $\lambda \in P^+_{\lie b_{\scriptscriptstyle(2)}} \setminus \{0\}$.
\end{proposition}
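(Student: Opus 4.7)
The plan is to derive both assertions by reducing to the analogous results for the distinguished Borel subalgebras $\lie b_{\scriptscriptstyle(1)}$ and $\lie b_{\scriptscriptstyle(3)}$ (Proposition~\ref{prop:Kac_dist_properties}), using \eqref{eq:iso.irreps} on the level of irreducible modules and Lemma~\ref{lem:kac.iso} on the level of the generalized Kac modules themselves.

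For the characterization of $P^+_{\lie b_{\scriptscriptstyle(2)}}$, the two branches of \eqref{eq:iso.irreps} combined with Proposition~\ref{prop:Kac_dist_properties}\ref{prop:Kac_dist_properties_3} handle every $\lambda$. If $\lambda_1 = 0$, then $V_{\lie b_{\scriptscriptstyle(2)}}(\lambda) \cong V_{\lie b_{\scriptscriptstyle(1)}}(\lambda)$ is finite-dimensional iff $\lambda_2 \in \Z_{\ge 0}$. If $\lambda_1 \ne 0$, then $V_{\lie b_{\scriptscriptstyle(2)}}(\lambda) \cong V_{\lie b_{\scriptscriptstyle(1)}}(\lambda + \alpha_1)$ is finite-dimensional iff $(\lambda + \alpha_1)(h_2) = \lambda_2 - 1 \in \Z_{\ge 0}$, i.e., $\lambda_2 \in \Z_{\ge 1}$. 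Since the identity $\lambda_3 = \lambda_1 - \lambda_2$ forces $\lambda = 0$ whenever $\lambda_1 = \lambda_2 = 0$, assembling both cases gives $P^+_{\lie b_{\scriptscriptstyle(2)}} = \{\lambda \in \h^* \mid \lambda_2 \in \Z_{\ge 1}\} \cup \{0\}$.

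For the dimension formula, fix $\lambda \in P^+_{\lie b_{\scriptscriptstyle(2)}} \setminus \{0\}$; then $\lambda_2 \ge 1$, and $\lambda_3 = \lambda_1 - \lambda_2$ guarantees that at least one of $\lambda_1, \lambda_3$ is non-zero. If $\lambda_1 \ne 0$, I would apply Lemma~\ref{lem:kac.iso} with $\alpha = -\alpha_1 \in \Pi_{\scriptscriptstyle(2)}$, using $r_{-\alpha_1}(\Pi_{\scriptscriptstyle(2)}) = \Pi_{\scriptscriptstyle(1)}$, to obtain an isomorphism $K_{\lie b_{\scriptscriptstyle(2)}}(\lambda) \cong K_{\lie b_{\scriptscriptstyle(1)}}(\lambda + \alpha_1)$. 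Proposition~\ref{prop:Kac_dist_properties}\ref{prop:Kac_dist_properties_2} then yields
\[
\dim K_{\lie b_{\scriptscriptstyle(1)}}(\lambda + \alpha_1) = \dim \Lambda(\g_{-1}) \cdot \dim L_{\gl}(\lambda + \alpha_1) = 4 \cdot \big((\lambda_2 - 1) + 1\big) = 4 \lambda_2.
\]
If instead $\lambda_1 = 0$ (so $\lambda_3 \ne 0$), the analogous argument with $\alpha = \alpha_3 \in \Pi_{\scriptscriptstyle(2)}$, using $r_{\alpha_3}(\Pi_{\scriptscriptstyle(2)}) = \Pi_{\scriptscriptstyle(3)}$, produces $K_{\lie b_{\scriptscriptstyle(2)}}(\lambda) \cong K_{\lie b_{\scriptscriptstyle(3)}}(\lambda - \alpha_3)$, and the $\lie b_{\scriptscriptstyle(3)}$-version of Proposition~\ref{prop:Kac_dist_properties}\ref{prop:Kac_dist_properties_2} again yields dimension $4\lambda_2$.

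The only delicate point is the subcase $\lambda_1 = 0$: there Lemma~\ref{lem:kac.iso} with $\alpha = -\alpha_1$ fails because $\lambda(h_{-\alpha_1}) = 0$, so no direct isomorphism with a Kac module for $\lie b_{\scriptscriptstyle(1)}$ is available. The resolution is that $\Pi_{\scriptscriptstyle(2)}$ contains a \emph{second} odd simple root $\alpha_3$, and the constraints $\lambda_1 = 0$, $\lambda_2 \ge 1$ force $\lambda_3 = -\lambda_2 \ne 0$, making the odd reflection at $\alpha_3$ (into $\Pi_{\scriptscriptstyle(3)}$) applicable and giving the missing isomorphism.
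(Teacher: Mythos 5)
Your proposal is correct and follows essentially the same route as the paper: reduce to the distinguished Borel subalgebras via \eqref{eq:iso.irreps} for the description of $P^+_{\lie b_{\scriptscriptstyle(2)}}$, and via Lemma~\ref{lem:kac.iso} together with Proposition~\ref{prop:Kac_dist_properties} for the dimension formula. The only cosmetic difference is the case split (the paper works with ``either $\lambda_1\neq0$ or $\lambda_3\neq0$'' throughout, while you split on ``$\lambda_1=0$ versus $\lambda_1\neq0$'' in the first part), which changes nothing of substance.
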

\begin{proof}
Since $K_{\lie b_{\scriptscriptstyle(2)}}(0) = \C$  we trivially have $0 \in P^+_{\lie b_{\scriptscriptstyle(2)}}$.

Now, assume that $\lambda \in \h^* \setminus \{ 0 \}$.  Since $\{h_1, h_3\}$ is a basis for $\h$, either $\lambda_1 \ne 0$ or $\lambda_3 \ne 0$.  By \eqref{eq:iso.irreps}, if $\lambda_1 \ne 0$, then $V_{\lie b_{\scriptscriptstyle (2)}}(\lambda) \cong_\g V_{\lie b_{\scriptscriptstyle (1)}} (\lambda+\alpha_1)$, and if $\lambda_3 \ne 0$, then $V_{\lie b_{\scriptscriptstyle (2)}}(\lambda) \cong_\g V_{\lie b_{\scriptscriptstyle (3)}} (\lambda-\alpha_3)$.  By Proposition~\ref{prop:Kac_dist_properties}\ref{prop:Kac_dist_properties_2}, $V_{\lie b_{\scriptscriptstyle (1)}} (\lambda+\alpha_1)$ is finite dimensional if and only if $(\lambda+\alpha_1)(h_2)\in \Z_{\geq 0}$, and $V_{\lie b_{\scriptscriptstyle (3)}} (\lambda-\alpha_3)$ is finite dimensional if and only if $(\lambda-\alpha_3)(h_2)\in \Z_{\geq 0}$. Thus, $V_{\lie b_{\scriptscriptstyle (2)}}(\lambda)$ is finite dimensional if and only if $\lambda_2 \in \Z_{\geq 1}$.  This proves the first statement.

Finally, assume that $\lambda \in P^+_{\lie b_{\scriptscriptstyle (2)}} \setminus \{0\}$.  By Lemma~\ref{lem:kac.iso}, if $\lambda_1 \ne 0$, then $K_{\lie b_{\scriptscriptstyle (2)}} (\lambda) \cong_\g K_{\lie b_{\scriptscriptstyle (1)}} (\lambda+\alpha_1)$, and if $\lambda_3 \ne 0$, then $K_{\lie b_{\scriptscriptstyle (2)}} (\lambda) \cong_\g K_{\lie b_{\scriptscriptstyle (3)}}(\lambda-\alpha_3)$.  By Proposition~\ref{prop:Kac_dist_properties}\ref{prop:Kac_dist_properties_3}, if $K_{\lie b_{\scriptscriptstyle (2)}} (\lambda) \cong_\g K_{\lie b_{\scriptscriptstyle (1)}} (\lambda+\alpha_1)$, then $\dim K_{\lie b_{\scriptscriptstyle (2)}} (\lambda) = 4 \dim L_{\gl}(\lambda + \alpha_1)$, and if $K_{\lie b_{\scriptscriptstyle (2)}} (\lambda) \cong_\g K_{\lie b_{\scriptscriptstyle (3)}}(\lambda-\alpha_3)$, then $\dim K_{\lie b_{\scriptscriptstyle (2)}} (\lambda) = 4 \dim L_{\gl}(\lambda - \alpha_3)$.  Since $\dim L_{\gl}(\lambda + \alpha_1) = \dim L_{\gl}(\lambda - \alpha_3) = \lambda_2$, the result follows.
\end{proof}

In the last result of this section we use Proposition~\ref{prop:Kac_dist_properties} and the usual representation theory of $\lie{sl}(2)$ to refine Proposition~\ref{prop:Kac_dist_properties}\ref{prop:Kac_dist_properties_2} and obtain decompositions of $K_{\lie b_{\scriptscriptstyle (1)}}(\lambda)$, $K_{\lie b_{\scriptscriptstyle (2)}}(\lambda)$, $K_{\lie b_{\scriptscriptstyle (3)}}(\lambda)$ as $\g_0$-modules.

\begin{proposition} \label{prop:kac.g0-mods}
Let $\lambda = (\lambda_1, \lambda_2) \in P^+_{\lie b}$, $\lie b \in \{\lie b_{\scriptscriptstyle (1)}, \lie b_{\scriptscriptstyle (2)}, \lie b_{\scriptscriptstyle (3)}\}$. We have the following isomorphisms of $\g_0$-modules:
\begin{enumerate}[(a), leftmargin=*]
\item \label{cor:kac.b1}
$K_{\lie b_{\scriptscriptstyle (1)}} (\lambda_1, \lambda_2) \cong L_{\gl}(\lambda_1, \lambda_2)\oplus L_{\gl}(\lambda_1 - 1, \lambda_2) \oplus L_{\gl}(\lambda_1, \lambda_2 + 1)\oplus L_{\gl}(\lambda_1 - 1, \lambda_2 -1)^{\oplus (1-\delta_{\lambda_2, 0})}$,

\item \label{cor:kac.b3}
$K_{\lie b_{\scriptscriptstyle (3)}} (\lambda_1, \lambda_2) \cong L_{\gl}(\lambda_1, \lambda_2)\oplus L_{\gl}(\lambda_1+1, \lambda_2) \oplus L_{\gl}(\lambda_1 + 1, \lambda_2+1)\oplus L_{\gl}(\lambda_1, \lambda_2-1)^{\oplus (1-\delta_{\lambda_2, 0})}$.
\end{enumerate}
In particular, if $\lambda\neq 0$, then the following isomorphism of $\g_0$-modules holds:
\begin{enumerate}[(c), leftmargin=*]
\item  \label{cor:kac.b2}
$K_{\lie b_{\scriptscriptstyle (2)}} (\lambda_1, \lambda_2) \cong L_{\gl}(\lambda_1, \lambda_2-1)\oplus L_{\gl}(\lambda_1-1, \lambda_2-1) \oplus L_{\gl}(\lambda_1, \lambda_2)\oplus L_{\gl}(\lambda_1-1, \lambda_2-2)^{\oplus (1-\delta_{\lambda_2, 1})}$.
\end{enumerate}
\end{proposition}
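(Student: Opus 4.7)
The plan is to prove (a) and (b) in parallel from Proposition~\ref{prop:Kac_dist_properties}\ref{prop:Kac_dist_properties_2}, and then derive (c) by reduction to them via Lemma~\ref{lem:kac.iso}.

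For (a), the starting point is the $\g_0$-module isomorphism $K_{\lie b_{\scriptscriptstyle (1)}}(\lambda) \cong \Lambda(\g_{-1}) \otimes L_{\gl}(\lambda)$. Computing brackets with $h_1, h_2$, the basis vectors $y_1, y_3$ of $\g_{-1}$ have $\h$-weights $(0,1)$ and $(-1,-1)$ in the $(h_1,h_2)$-coordinates; so $\g_{-1} \cong L_{\gl}(0,1)$ and $\Lambda^2(\g_{-1}) = \C\, y_1\wedge y_3 \cong L_{\gl}(-1,0)$, giving
\[
\Lambda(\g_{-1}) \cong L_{\gl}(0,0)\oplus L_{\gl}(0,1)\oplus L_{\gl}(-1,0).
\]
Part (a) then reduces to three applications of the standard $\gl(2)$ Clebsch--Gordan rule
\[
L_{\gl}(\mu_1,\mu_2)\otimes L_{\gl}(\nu_1,\nu_2)\cong \bigoplus_{k=0}^{\min(\mu_2,\nu_2)} L_{\gl}\bigl(\mu_1+\nu_1-k,\ \mu_2+\nu_2-2k\bigr),
\]
the factor $1-\delta_{\lambda_2,0}$ tracking whether the $k=1$ summand appears in $L_{\gl}(\lambda_1, \lambda_2)\otimes L_{\gl}(0,1)$. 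Part (b) proceeds identically with $\g_{-1}$ replaced by $\g_1$: one finds $\g_1 \cong L_{\gl}(1,1)$, $\Lambda^2(\g_1) \cong L_{\gl}(1,0)$, hence $\Lambda(\g_1) \cong L_{\gl}(0,0)\oplus L_{\gl}(1,1)\oplus L_{\gl}(1,0)$, and Clebsch--Gordan finishes the job.

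For (c), the case $\lambda = 0$ is immediate from $K_{\lie b_{\scriptscriptstyle (2)}}(0) \cong \C$. For nonzero $\lambda \in P^+_{\lie b_{\scriptscriptstyle (2)}}$, Proposition~\ref{prop:dim.k2.sl12} gives $\lambda_2 \geq 1$, and since $\lambda_2 = \lambda_1 - \lambda_3$, at least one of $\lambda_1,\lambda_3$ is nonzero. If $\lambda_1\neq 0$, Lemma~\ref{lem:kac.iso} applied to the odd simple root $-\alpha_1\in\Pi_{\scriptscriptstyle (2)}$ (noting that $r_{-\alpha_1}\Pi_{\scriptscriptstyle (2)} = \Pi_{\scriptscriptstyle (1)}$) yields a $\g$-module isomorphism $K_{\lie b_{\scriptscriptstyle (2)}}(\lambda) \cong K_{\lie b_{\scriptscriptstyle (1)}}(\lambda_1, \lambda_2 - 1)$, and substitution into (a) produces the decomposition claimed in (c). If instead $\lambda_3\neq 0$, the reflection at $\alpha_3\in\Pi_{\scriptscriptstyle (2)}$ gives $K_{\lie b_{\scriptscriptstyle (2)}}(\lambda) \cong K_{\lie b_{\scriptscriptstyle (3)}}(\lambda_1 - 1, \lambda_2 - 1)$, and part (b) yields the same decomposition.

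The entire argument is essentially bookkeeping and I do not anticipate any real obstacle; the only delicate point is the edge case $\lambda_2 = 1$, where the formal summand $L_{\gl}(\lambda_1-1,\lambda_2-2) = L_{\gl}(\lambda_1-1,-1)$ must be read as zero in order to match the dimension $\dim K_{\lie b_{\scriptscriptstyle (2)}}(\lambda) = 4\lambda_2$ of Proposition~\ref{prop:dim.k2.sl12}. Under this convention, the two reductions agree on the overlap $\lambda_1\lambda_3\neq 0$, so (c) is well defined regardless of which simple root one reflects.
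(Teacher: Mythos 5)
Your proof is correct and takes essentially the same route as the paper: parts (a) and (b) come from the $\g_0$-module isomorphism $K_{\lie b}(\lambda)\cong\Lambda(\g_{\mp1})\otimes L_{\gl}(\lambda)$ of Proposition~\ref{prop:Kac_dist_properties}\ref{prop:Kac_dist_properties_2} together with $\gl(2)$ Clebsch--Gordan, and part (c) reduces to them via the odd reflections of Lemma~\ref{lem:kac.iso}, exactly as in the paper. Your remark that for $\lambda_2=1$ the formal summand $L_{\gl}(\lambda_1-1,\lambda_2-2)$ must be read as zero is a correct (and worthwhile) reading of the statement, consistent with $\dim K_{\lie b_{\scriptscriptstyle(2)}}(\lambda)=4\lambda_2$ from Proposition~\ref{prop:dim.k2.sl12}.
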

\begin{proof}
The proofs of parts~\ref{cor:kac.b1} and \ref{cor:kac.b3} follow from Proposition~\ref{prop:Kac_dist_properties}\ref{prop:Kac_dist_properties_2} and the usual fi\-ni\-te\--di\-mension\-al representation theory of $\lie{sl}(2)$.  If $\lambda\neq 0$, then either $K_{\lie b_{\scriptscriptstyle (2)}}(\lambda)\cong K_{\lie b_{\scriptscriptstyle (1)}}(\lambda+\alpha_1)$ or $K_{\lie b_{\scriptscriptstyle (2)}}(\lambda)\cong K_{\lie b_{\scriptscriptstyle (3)}}(\lambda-\alpha_3)$ (Lemma~\ref{lem:kac.iso}). Now, part~\ref{cor:kac.b2} follows from parts \ref{cor:kac.b1} and \ref{cor:kac.b3}, together with the facts that $(\lambda+\alpha_1)(h_1)=\lambda_1$, $(\lambda-\alpha_3)(h_1)=\lambda_1-1$ and $(\lambda+\alpha_1)(h_2)=(\lambda-\alpha_3)(h_2)=\lambda_2-1$.
\end{proof}

\section{Local Weyl modules} \label{s:weyl}

Throughout the rest of the paper we will assume that
	\[
\g := \lie{sl}(1|2) \ \text{ and } \ \lie b := \lie b_{\scriptscriptstyle (2)}.
	\]
Recall that, for a given $\lambda \in P_{\lie b}^+$, we denote $\lambda(h_1)$ by $\lambda_1$, $\lambda(h_2)$ by $\lambda_2$, and $\lambda$ by $(\lambda_1, \lambda_2) \in \C \times \Z_{\ge0}$.  Also, denote $K_{\lie b}(\lambda)$ simply by $K(\lambda)$ and $P^+_{\lie b} = \{ \lambda \in \h^* \mid  \lambda_2 \in \Z_{\ge1} \}\cup \{0\}$ simply by $P^+$.

Given any Lie superalgebra $\lie s$, denote by $\lie s[t]$ the \emph{current superalgebra} $\lie s \otimes \C[t]$ associated to $\lie s$.  For any element $x \in \lie s$ and $m \in \Z_{\ge0}$, the element $x \otimes t^m \in \lie s[t]$ will be denoted by $x(m)$.  By definition, for every $x, y \in \lie s$ and $m, n \in \Z_{\ge0}$, the Lie bracket in $\lie s[t]$ is given by $[x(m), y(n)] = [x,y](m+n)$.

We now recall the definition of local Weyl module given in \cite[Definition~4.1]{CLS19} for the particular case where $\g = \lie{sl}(1|2)$ and $A = \C[t]$.

\begin{definition} \label{defn:local.weyl}
Given $\psi \in \h[t]^*$ such that $\psi(h_2)\in \Z_{\geq 0}$, define the \emph{local Weyl module} $W(\psi)$ to be the $\g[t]$-module given as the quotient of $\U(\g[t])$ by the left ideal generated by
\begin{equation} \label{eq:defn.rels.wpsi}
\n^+[t], \qquad
H - \psi(H) \quad \textup{for all} \quad H \in \h[t], \qquad
y_2^{\psi(h_2)+1}.
\end{equation}
Notice that the local Weyl module is a highest-weight module of highest weight $\psi\arrowvert_{\h}$, generated by the image of $1 \in \U(\g[t])$ onto $W(\psi)$, which is a homogeneous even vector denoted by $w_\psi$.
\end{definition}

It has been proven in \cite[Corollary~8.16]{BCM19} that $W(\psi)$ is non-zero only if there exists a finite-codimensional ideal $I \subseteq \C[t]$ such that $\psi(\h \otimes I) = 0$.  Also, it has been proven in \cite[Theorem~8.13(b)]{BCM19} that $W(\psi)$ is finite dimensional if $\left. \psi \right|_\h \in P^+$.

Moreover, if $\psi \in \h[t]^*$ is such that $\psi(h \otimes t^k) = 0$ for all $h \in \h$, $k > 0$, then $\psi$ is uniquely determined by $\psi \arrowvert_\h \in P^+$.  On the other hand, every $\lambda \in P^+$ can be considered as an element of $\h[t]^*$ by defining $\lambda(h \otimes t^k) = \delta_{0,k} \lambda(h)$ for all $h \in \h$, $k \ge 0$.  When $\psi(h \otimes t^k) = 0$ for all $h \in \h$, $k > 0$, the left ideal of $\U(\g[t])$ generated by \eqref{eq:defn.rels.wpsi} is graded with respect to the degree of $t$.  Hence, the local Weyl module $W(\lambda)$ inherits a grading from $\U(\g[t])$, and we call it a graded local Weyl module.

\subsection{Graded local Weyl modules}

In what follows we focus on graded local Weyl modules as we intend to obtain a close relation between these modules and  fusion products of Kac modules.  Therefore, we fix $\lambda \in P^+$ and consider it as an element in $\h[t]^*$ as described above. In this subsection we construct a set of generators for $W(\lambda)$.

We begin by fixing some notation and proving a technical lemma, which will be used throughout the paper.  Given an element $u \in \U(\g[t])$ and $p \in \Z_{\ge0}$, define its $p$-th divided power to be $u^{(p)} := \frac{u^p}{p!}$.  Given $r, s \ge 0$, define 
\[
\mathbf y_2(r,s)
:= \sum_{\substack{b_0 + b_1 + \dotsb + b_s = r \\ b_1 + 2 b_2 + \dotsb + s b_s = s}} y_2^{(b_0)} y_2(1)^{(b_1)} \dotsm y_2(s)^{(b_s)}.
\]

\begin{lemma} \label{lem:comm.rels}
For all $a, a_1, \dotsc, a_i, b, b_1, \dotsc, b_k, c, c_1, \dotsc, c_\ell, r \in \Z_{\ge0}$, $s\in\Z_{\geq 1}$ and $h \in \lie h$, the following relations hold in $\U(\g[t])$:
\begin{align}
x_2(a)y_3(c_1)\cdots y_3(c_\ell)
={}&{} \left( \sum_{j=1}^\ell (-1)^{\ell-j} \prod_{i\neq j} y_3(c_i)y_1(c_j+a) \right) +
y_3(c_1)\cdots y_3(c_\ell)x_2(a); \label{eq:com.rel1} \\
h(a)y_3(c_1)\cdots y_3(c_\ell)
={}&{} \left(\alpha_3(h) \sum_{j=1}^\ell (-1)^{\ell-j+1} \prod_{i\neq j} y_3(c_i)y_3(c_j+a) \right) + y_3(c_1) \cdots y_3(c_\ell)h(a); \label{eq:com.rel2} \\
x_2(a)x_1(b_1)\cdots x_1(b_k)
={}&{} \left(\sum_{j=1}^k (-1)^{k-j+1} \prod_{i\neq j} x_1(b_i) x_3(b_j+a) \right) + x_1(b_1)\cdots x_1(b_k)x_2(a);  \label{eq:com.rel3} \\
h(a)x_1(b_1)\cdots x_1(b_k)
={}&{} \left(\sum_{j=1}^k (-1)^{k-j} \alpha_1(h) \prod_{i \ne j} x_1(b_i) x_1(b_j+a) \right) + x_1(b_1)\cdots x_1(b_k)h(a);  \label{eq:com.rel4} \\
x_3(b)y_3(c_1)\cdots y_3(c_\ell)
={}&{} \left( \sum_{j=1}^\ell (-1)^{j+1} \prod_{i\neq j} y_3(c_i)h_3(c_j+b) \right) +
(-1)^\ell y_3(c_1)\cdots y_3(c_\ell)x_3(b); \label{eq:com.rel5} \\
y_1(c) y_2(a_1) \dotsm y_2(a_j)
={}&{} - \left(\sum_{i=1}^j \prod_{p \ne i} y_2(a_p) y_3(a_i+c) \right) + y_2(a_1) \dotsm y_2(a_j) y_1(c); \label{eq:com.rel6} \\
x_3(b) y_2(a_1) \dotsm y_2(a_j)
={}&{} \left(\sum_{i=1}^j \prod_{p \ne i} y_2(a_p) x_1(a_i+b) \right) + y_2(a_1) \dotsm y_2(a_j) x_3(b); \label{eq:com.rel7} \\
x_2(1)^{(s)} y_2^{(r+s)} {}&{} - (-1)^s\mathbf y_2(r,s)
\in  \U(\g[t])(\n^+[t]\oplus t\h[t]); \label{eq:garland} \\
y_1(c) \mathbf y_2 (r, s)
={}&{} \mathbf y_2(r,s)y_1(c) - \sum_{q=0}^{s} \mathbf y_2(r-1, s-q) y_3(c+q); \label{eq:com.rel8} \\
x_3(b) \mathbf y_2 (r, s)
={}&{} \mathbf y_2(r,s)x_3(b) + \sum_{q=0}^{s} \mathbf y_2(r-1, s-q) x_1(b+q). \label{eq:com.rel9}
\end{align}
\end{lemma}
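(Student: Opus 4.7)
The plan is to treat the nine identities in three groups of increasing subtlety, invoking the super structure of $\fsl(1|2)[t]$ only as much as each identity requires.

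The identities \eqref{eq:com.rel1}--\eqref{eq:com.rel4}, \eqref{eq:com.rel6}, and \eqref{eq:com.rel7} all have the same shape: a single element is pushed past a product of factors of identical parity. I would prove each by induction on the length of the product (or, equivalently, in one go by iterating the super-Leibniz rule), using only the elementary brackets $[x_2, y_3] = y_1$, $[h, y_3] = -\alpha_3(h)y_3$, $[x_2, x_1] = -x_3$, $[h, x_1] = \alpha_1(h)x_1$, $[y_1, y_2] = -y_3$, and $[x_3, y_2] = x_1$ (each read in $\U(\g[t])$ with the appropriate powers of $t$). The alternating signs $(-1)^{\ell-j}$ and $(-1)^{k-j+1}$ in \eqref{eq:com.rel1}--\eqref{eq:com.rel4} arise because the commutator produces an \emph{odd} element that must then swim past the remaining odd factors to reach its assigned rightmost slot; in \eqref{eq:com.rel6} and \eqref{eq:com.rel7} the surviving $y_2$-factors are even, so the produced odd element $y_3(\cdot)$ or $x_1(\cdot)$ moves through the tail without picking up signs.

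The identity \eqref{eq:com.rel5} is structurally different because both $x_3$ and $y_3$ are odd, so $[x_3(b), y_3(c)] = h_3(b+c)$ is even, and moving $x_3(b)$ across $\ell$ odd factors produces the global sign $(-1)^\ell$ on the surviving tail term. The crucial extra observation is that $\alpha_3(h_3)=0$, hence $[h_3, y_3]=0$ in $\g$; this allows every produced $h_3$-term to be transported past subsequent $y_3$-factors without generating further commutator corrections. An induction on $\ell$ then yields the stated identity cleanly.

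The genuine technical content lies in the Garland-type identity \eqref{eq:garland}. Since $\{x_2, h_2, y_2\}$ span a copy of $\fsl(2)$ inside $\g_0$ and all letters in the monomial are even, this identity takes place entirely inside $\U(\fsl(2)[t])$ modulo the left ideal $\U(\g[t])(\n^+[t]\oplus t\h[t])$, and so reduces to the classical Garland identity. I plan to prove it by induction on $s$, iteratively using the standard commutation relations between $x_2(1)$ and divided powers of $y_2$ in $\U(\fsl(2)[t])$ to reduce to smaller $s$, while organizing the resulting partition sum via the generating-function identity
\[
\sum_{r,s\ge 0} \mathbf y_2(r,s)\,T^r z^s \;=\; \exp\Bigl(T\sum_{n\ge 0}y_2(n)\,z^n\Bigr),
\]
which is valid because the $y_2(n)$ pairwise commute in $\U(\g[t])$. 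This is where I expect the main obstacle: ensuring at each inductive step that the error terms actually land in the prescribed left ideal, and that the divided-power combinatorics collapse to the claimed sign $(-1)^s$ and partition sum.

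Finally, \eqref{eq:com.rel8} and \eqref{eq:com.rel9} follow from \eqref{eq:com.rel6} and \eqref{eq:com.rel7} applied term-by-term to the defining sum for $\mathbf y_2(r,s)$. By the super-Leibniz rule, $y_1(c)$ (respectively $x_3(b)$) acts on each monomial $y_2^{(b_0)}y_2(1)^{(b_1)}\cdots y_2(s)^{(b_s)}$ as a derivation, lowering some slot $b_q$ by one and producing $y_3(c+q)$ (respectively $x_1(b+q)$); this produced element then commutes with the surviving even $y_2$-factors since $[y_3, y_2]=[x_1, y_2]=0$ in $\g$. Collecting contributions by $q$ and substituting $b_q-1\mapsto c_q$ identifies the reduced partition sum with $\mathbf y_2(r-1, s-q)$, yielding the claimed identities.
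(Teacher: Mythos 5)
Your proposal is correct and follows essentially the same route as the paper: identities \eqref{eq:com.rel1}--\eqref{eq:com.rel7} by iterating the same elementary brackets (with the signs governed by the parity of the produced element, and the key observation $[h_3,y_3]=0$ for \eqref{eq:com.rel5}), and \eqref{eq:com.rel8}, \eqref{eq:com.rel9} by applying \eqref{eq:com.rel6}, \eqref{eq:com.rel7} term-by-term to $\mathbf y_2(r,s)$ using $[y_3,y_2]=[x_1,y_2]=0$. The only divergence is \eqref{eq:garland}: you correctly reduce it to the classical Garland identity in $\U(\fsl(2)[t])$, which is exactly what the paper does by simply citing \cite[Lemma~1.3(ii)]{CP01}; your additional sketch of an inductive/generating-function reproof is plausible but unnecessary, and is the only place where details are left unverified.
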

\begin{proof}
The proofs of equations~\eqref{eq:com.rel1}--\eqref{eq:com.rel7} are straight-forward.  The proof of equation~\eqref{eq:garland} is given in \cite[Lemma~1.3(ii)]{CP01}.  The proof of equation~\eqref{eq:com.rel8} is obtained by repeatedly using \eqref{eq:com.rel6} and the fact that $y_3y_2 - y_2y_3 = [y_3,y_2] = 0$, namely:
\begin{align*}
y_1(c)\mathbf y_2(r,s)
&= \sum_{\substack{b_0 + b_1 + \dotsb + b_s = r \\ b_1 + 2 b_2 + \dotsb + sb_s = s}} \left( y_2^{(b_0)} y_2(1)^{(b_1)} \dotsm y_2(s)^{(b_s)} y_1(c) \right. \\
& \qquad\qquad \left. - \sum_{q=0}^s y_2^{(b_0)} \dotsm y_2(q-1)^{(b_{_{q-1}})} y_2(q)^{(b_q-1)} y_3(c+q) y_2(q+1)^{(b_{_{q+1}})}\dotsm y_2(s)^{(b_s)} \right)\\
&= \mathbf y_2(r,s) y_1(c) - \sum_{q=0}^s \sum_{\substack{a_0 + a_1 + \dotsb + a_s = r-1 \\ a_1 + 2a_2 + \dotsb sa_s = s-q}} y_2^{(a_0)} \dotsm y_2(s)^{(a_s)} y_3(c+q) \\
&= \mathbf y_2(r,s) y_1(c) - \sum_{q=0}^s \mathbf y_2(r-1, s-q) y_3(c+q).
\end{align*}
The proof of equation~\eqref{eq:com.rel9} is analogous to the proof of equation~\eqref{eq:com.rel8}, as it is obtained by repeatedly using \eqref{eq:com.rel7} and the fact that $x_1y_2 - y_2x_1 = [x_1,y_2] = 0$.
\end{proof}

We now introduce a linear order on the following set of monomials in $\U(\g[t])$:
\[
\mathcal M := \{ x_1(b_1) \dotsm x_1(b_k) y_3(c_1) \dotsm y_3(c_\ell) \mid 0 \le b_1 < \dotsb < b_k, \ 0 \le c_1 < \dotsb < c_\ell, \ k, \ell \in \Z_{\ge0} \}.
\]
First, consider the short-reverse-colex order on tuples $(b_1, \dotsc, b_k)$, that is, $(b_1, \dotsc, b_k) \prec (b_1', \dotsc, b_q')$ if $k < q$, or $k=q$ and $(b'_k, \dotsc, b'_1) < (b_k, \dotsc, b_1)$ in the lexicographical order.  Then, induce a lexicographical-like order $\prec$ on $\mathcal M$ in the following way.  Given $\pmb b = (b_1, \dotsc, b_k)$, $\pmb c = (c_1, \dotsc, c_\ell)$, $\pmb b' = (b'_1, \dotsc, b'_p)$, $\pmb c' = (c'_1, \dotsc, c'_q)$, we say that $(\pmb b; \pmb c) \prec (\pmb b'; \pmb c')$ if $\pmb b \prec \pmb b'$ in the short-reverse-colex order, or $\pmb b = \pmb b'$ and $\pmb c \prec \pmb c'$ in the short-reverse-colex order.

Now, given $\pmb b = (b_1, \dotsc, b_k)$, $\pmb c = (c_1, \dotsc, c_\ell)$, $\pmb b' = (b'_1, \dotsc, b'_p)$, $\pmb c' = (c'_1, \dotsc, c'_q)$, finite (strictly) increasing sequences of positive integers, define $m(\pmb b; \pmb c) := x_1(b_1) \dotsm x_1(b_k) y_3(c_1) \dotsm y_3(c_\ell)$ and set
\begin{equation}\label{eq:total.order}
m(\pmb b; \pmb c) \prec m(\pmb b'; \pmb c')
\Leftrightarrow (\pmb b; \pmb c) \prec (\pmb b'; \pmb c').
\end{equation}
Notice that the minimal element in $\mathcal M$ (with respect to $\prec$) is the one with $k = \ell = 0$, which corresponds to $m(\emptyset; \emptyset) := 1$.  Define the \emph{weight} of an element $m(\pmb b; \pmb c) = m(b_1, \dotsc, b_k; c_1, \dotsc, c_\ell) \in \mathcal M$ to be $\wt (m(\pmb b; \pmb c)) := k + \ell$ and its \emph{degree} to be ${\rm dg}(m(\pmb b; \pmb c)) := b_1 + \dotsb + b_k + c_1 + \dotsb + c_\ell$.

The next result gives a generating set for graded local Weyl modules.  This set will be proven to be a basis in Theorem~\ref{thm:weyl.dim}.  We will need the following well-known binomial formula  
\begin{equation}\label{e:binomial}
\sum_{q=0}^p \binom{p}{q} a^{p-q} = (a+1)^p, \quad a \in \mathbb R, \ p \in \Z_{\ge0}.
\end{equation}

\begin{proposition} \label{prop:Weyl_basis}
For every $\lambda = (\lambda_1, \lambda_2) \in P^+$, the local Weyl module $W(\lambda)$ is spanned by
\begin{equation}\label{eq:span.set.sl(1,2)}
y_2(a_1) \dotsm y_2(a_j) x_1(b_1) \dotsm x_1(b_k) y_3(c_1) \dotsm y_3(c_\ell)w_\lambda,
\end{equation}
with $0 \leq c_1 < \cdots < c_\ell \leq \lambda_2-1$, $0 \leq b_1 < \cdots < b_k \leq \lambda_2-\ell-1$, $0 \leq a_1 \leq \cdots \leq a_j \leq \lambda_2-\ell-k-j$.  In particular, $\dim W (\lambda) \leq 4^{\lambda_2}$.
\end{proposition}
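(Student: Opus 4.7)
My plan is to first produce a super-PBW spanning set for $W(\lambda)$, then impose the upper bounds on the indices using iterated consequences of Garland's identity and the commutation formulas in Lemma~\ref{lem:comm.rels}, and finally count. I begin with super-PBW applied to $\U(\n^-[t])$: since $\n^+[t]\, w_\lambda = 0$, $h(0) w_\lambda = \lambda(h) w_\lambda$ for $h \in \h$, and $h(k) w_\lambda = 0$ for $k > 0$, we obtain $W(\lambda) = \U(\n^-[t])\, w_\lambda$. The Lie superalgebra $\n^-$ is generated by the even element $y_2$ and the odd elements $x_1, y_3$, with (from the matrix realisation) $[y_2, x_1] = [y_2, y_3] = 0$ and $[x_1, y_3] = y_2$; in particular $x_1(b)^2 = y_3(c)^2 = 0$ in $\U(\g[t])$. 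An induction on the number of $x_1$-$y_3$ inversions (the swap $y_3(c)\, x_1(b) = -x_1(b)\, y_3(c) + y_2(b+c)$ trades one inversion for an extra $y_2$ of strictly lower odd-content) shows that $W(\lambda)$ is spanned by ordered monomials $y_2(a_1) \cdots y_2(a_j)\, x_1(b_1) \cdots x_1(b_k)\, y_3(c_1) \cdots y_3(c_\ell)\, w_\lambda$ with $a_1 \leq \cdots \leq a_j$, $b_1 < \cdots < b_k$ and $c_1 < \cdots < c_\ell$, but with the indices a priori unbounded from above.

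Next, I impose the claimed upper bounds. Applying \eqref{eq:garland} to $w_\lambda$, together with $y_2^{\lambda_2+1} w_\lambda = 0$ and annihilation by $\n^+[t] + t\h[t]$, yields $\mathbf y_2(r, s)\, w_\lambda = 0$ for every $r + s \geq \lambda_2 + 1$. Acting on the left by $y_1(e)$ and using \eqref{eq:com.rel8} together with $y_1(e) w_\lambda = 0$ gives
\begin{equation*}
\sum_{q=0}^{s} \mathbf y_2(r-1, s-q)\, y_3(e+q)\, w_\lambda = 0 \qquad (r + s \geq \lambda_2 + 1),
\end{equation*}
and iterating (using $[y_1, y_3] = 0$) produces identities carrying $\ell$ successive $y_3$-factors. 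A parallel derivation using \eqref{eq:com.rel9} and $x_3(b) w_\lambda = 0$ (together with $[x_3, x_1] = 0$) yields the analogous identities with $x_1$-factors, and alternating applications of $y_1$'s and $x_3$'s produce mixed identities carrying both. For suitable choices of parameters, the $\prec$-leading term of such an identity (with $\prec$ the order on $\mathcal M$ from \eqref{eq:total.order}, extended lexicographically in the $y_2$-part) is a spanning monomial that violates one of the target bounds $c_\ell \leq \lambda_2 - 1$, $b_k \leq \lambda_2 - \ell - 1$, or $a_j \leq \lambda_2 - \ell - k - j$; the identity rewrites it as a $\prec$-smaller combination, so a downward induction on $\prec$ simultaneously imposes all three bounds.

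To finish, I count: the number of compatible index-triples is $\binom{\lambda_2}{\ell}$ for the $c$'s, $\binom{\lambda_2-\ell}{k}$ for the $b$'s, and (after the standard substitution $a_i' = a_i + i - 1$) $\binom{\lambda_2-\ell-k}{j}$ for the $a$'s. Setting $i := \lambda_2 - \ell - k - j \geq 0$, the total equals
\begin{equation*}
\sum_{\substack{i,j,k,\ell \geq 0 \\ i+j+k+\ell = \lambda_2}} \binom{\lambda_2}{i, j, k, \ell} = 4^{\lambda_2}
\end{equation*}
by the multinomial theorem (or by iterating \eqref{e:binomial} three times), establishing the dimension bound.

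The hardest step is the second. Each $y_1$-insertion (producing a $y_3$) and each $x_3$-insertion (producing an $x_1$) consumes one unit of the Garland parameter $r$ in $\mathbf y_2(r, s)$, so after $\ell$ $y_3$'s and $k$ $x_1$'s the effective Garland relation only controls $y_2$-monomials of total weight $\leq \lambda_2 - \ell - k$; this is the combinatorial origin of the nested bound $a_j + j \leq \lambda_2 - \ell - k$. Showing that the iterated commutator expansion indeed has the predicted monomial as its $\prec$-leading term — so that the downward induction terminates and every PBW monomial reduces into the claimed spanning set — is the main technical obstacle, for which the ordering $\prec$ of \eqref{eq:total.order} appears to be custom-tailored.
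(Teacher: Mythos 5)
Your first and third steps match the paper's: the super-PBW reduction to ordered monomials in $\U(\n^-[t])$ (using $x_1(b)^2 = y_3(c)^2 = 0$ and the reordering relations) and the final multinomial count are both correct and essentially identical to what the paper does. The problem is your second step, which is where the entire content of the proposition lives, and which you yourself flag as "the main technical obstacle" without resolving it. You propose to impose all three bounds at once by a straightening algorithm: generate relations by hitting $\mathbf y_2(r,s)\,w_\lambda = 0$ (for $r+s \ge \lambda_2+1$) with iterated $y_1(e)$'s and $x_3(b)$'s via \eqref{eq:com.rel8} and \eqref{eq:com.rel9}, and then argue that each out-of-range monomial is the $\prec$-leading term of one such relation. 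But you never exhibit which relation eliminates which monomial, never verify the leading-term claim (note that $\mathbf y_2(r-1,s-q)$ is itself a large sum of $y_2$-monomials, so isolating a single leading term requires extending $\prec$ to the $y_2$-part and checking compatibility with the whole expansion), and never address why the resulting rewriting terminates. In particular the bound $a_j \le \lambda_2-\ell-k-j$, with its shift by $j$, is exactly the Chari--Pressley spanning result for $\lie{sl}(2)[t]$ local Weyl modules; your sketch would amount to reproving it from scratch while simultaneously tracking the interaction with the odd generators. As written, the argument is a plan, not a proof.

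The paper takes a different and more economical route at precisely this point, and it is worth internalizing. It first derives the bounds $c_\ell \le \lambda_2-1$ and $b_k \le \lambda_2-\ell-1$ by direct, short computations: Garland plus $y_2^{\lambda_2+1}w_\lambda=0$ gives $y_2(a)w_\lambda=0$ for $a\ge\lambda_2$, hence $y_3(c)w_\lambda=[y_2(c),y_1]w_\lambda=0$; then \eqref{eq:com.rel1} and \eqref{eq:com.rel2} show $y_3(c_1)\dotsm y_3(c_\ell)w_\lambda$ is a $\g_0'$-highest-weight vector of weight $\lambda_2-\ell$, which bounds the $b$'s the same way. Only then does it handle the $y_2$-part, and it does so \emph{not} by further Garland manipulations but by filtering $W(\lambda)$ along the finite ordered set $\mathcal F$ of surviving $x_1$-$y_3$ monomials (the filtration $F(r)$ of \eqref{eq:the.filtration}), passing to the associated graded $\g_0[t]$-module, recognizing each graded piece as a quotient of a graded local Weyl module for $\lie{sl}(2)[t]$ of highest weight $\lambda_2-\wt(m_r)$, and quoting the known basis from \cite[Section~6]{CP01}. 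That filtration is the idea your proposal is missing: it quarantines all the hard $\lie{sl}(2)$ combinatorics inside a citation and reduces the leading-term bookkeeping to the three displayed containments $x_2(a)m_rw_\lambda, h(a)m_rw_\lambda \in F(r)$. To repair your proof you would either need to carry out the full leading-term analysis you describe (a substantial undertaking) or adopt a filtration of this kind.
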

\begin{proof}
First, notice that the PBW Theorem together with the defining relations \eqref{eq:defn.rels.wpsi} imply that
\begin{align}
W(\lambda)
&= \U(\n^-[t]) \U(\h[t]) \U(\n^+[t])w_\lambda \notag \\
&= \U(\n^-[t])w_\lambda \notag \\
&= \sum_{\substack{a_1, \dotsc, a_j \ge 0 \\ b_1, \dotsc, b_k \ge 0 \\ c_1, \dotsc, c_\ell \ge 0}} \C \, y_2(a_1) \dotsm y_2(a_j) x_1(b_1) \dotsm x_1(b_k) y_3(c_1) \dotsm y_3(c_\ell)w_\lambda. \label{eq:1st.span}
\end{align}

Now, notice that $0 = [y_3, y_3] = 2 y_3^2$ and $0 = [x_1, x_1] = 2 x_1^2$.  Hence $y_3(a)^2 w_\lambda = x_1(a)^2 w_\lambda = 0$ for all $a \geq 0$; that is, $b_i \ne b_j$ and $c_i \ne c_j$ for all $i \ne j$ in \eqref{eq:1st.span}.  Thus, since $y_3(a) y_3(b) = - y_3(b)y_3(a)$ and $x_1(a) x_1(b) = - x_1(b) x_1(a)$ for all $a, b \ge 0$, we can reorder the factors in \eqref{eq:1st.span} in order to obtain $0 \le b_1 < \dotsb < b_k$ and $0 \le c_1 < \dotsb < c_\ell$.

Now, using equation~\eqref{eq:garland} and the relation $y_2^{\lambda_2+1} w_\lambda = 0$ \eqref{eq:defn.rels.wpsi}, we obtain that $y_2(a)w_\lambda = 0$ for all $a \ge \lambda_2$.  Hence $y_3(c) w_\lambda = [y_2(c), y_1] w_\lambda = 0$ for all $c \ge \lambda_2$.  Thus, we can restrict $c_\ell < \lambda_2$ in \eqref{eq:1st.span}.

Moreover, using \eqref{eq:com.rel1}, one sees that $x_2 \left( y_3(c_1) \dotsm y_3(c_\ell)w_\lambda \right) = 0$ and using \eqref{eq:com.rel2}, one sees that $h_2 \left( y_3(c_1) \dotsm y_3(c_\ell)w_\lambda \right) = (\lambda_2 - \ell) \left( y_3(c_1) \dotsm y_3(c_\ell)w_\lambda \right)$.  Hence, the representation theory of $\lie{sl}(2)$ implies that $y_2^{\lambda_2 - \ell + 1} \left( y_3(c_1) \dotsm y_3(c_\ell)w_\lambda \right) = 0$.  Using this vanishing condition and equation~\eqref{eq:garland}, we conclude that $y_2(a) \left( y_3(c_1) \dotsm y_3(c_\ell)w_\lambda \right) = 0$ for all $a \ge \lambda_2 - \ell$.  Hence $x_1(b) \left( y_3(c_1) \dotsm y_3(c_\ell)w_\lambda \right) = [x_3, y_2(b)] \left( y_3(c_1) \dotsm y_3(c_\ell)w_\lambda \right) = 0$ for all $b \ge \lambda_2 - \ell$.  Thus, we can restrict $b_k < \lambda_2 - \ell$ in \eqref{eq:1st.span}.

Since $c_\ell$ and $b_k$ are bounded, we have that the set of non-zero vectors of the form \eqref{eq:span.set.sl(1,2)} without $y_2(a_i)$ is finite. Let $\cF$ be the set of monomials $m \in \mathcal M$ for which $mw_\lambda \ne 0$.  Since $(\mathcal M, \prec)$ is a linearly ordered set and $\mathcal F$ is a finite subset of $\mathcal M$, then one can choose $N \in \Z_{\ge 0}$ and $m_0 \dotsc, m_N \in \mathcal M$ such that $\mathcal F = \{m_0, \dotsc, m_N\}$ and $m_0 \prec \dotsb \prec m_N$.  Now, for $r \in \Z$, define
	\begin{equation} \label{eq:the.filtration}
F(r) = \sum_{i < r} \U(\g_0[t])m_iw_\lambda.
	\end{equation}
Notice that: $F(r) = \{0\}$ for all $r \le 0$, and $F(r) = W(\lambda)$ for all $r > N$.  Also notice that $\{F(r)\}_{r \in \Z}$ is a filtration of $W(\lambda)$ as a $\g_0[t]$-module. Let ${\rm Gr}(W(\lambda))$ be the associated graded $\U(\g_0[t])$-module, and notice that ${\rm Gr}(W(\lambda))$ is isomorphic to $W(\lambda)$ as a vector space.  Hence, if we show that the images of the elements \eqref{eq:span.set.sl(1,2)} span ${\rm Gr}(W(\lambda))$, then \eqref{eq:span.set.sl(1,2)} will also span $W(\lambda)$.

Given a monomial $m_r = x_1(b_1) \dotsm x_1(b_k) y_3(c_1) \dotsm y_3(c_\ell)$, $r \in \{0, \dotsc, N\}$, we have:
\begin{align*}
x_2(a) \left(m_rw_\lambda\right) {}&{} \in F(r)  \quad \textup{for all $a \ge 0$}
& \textup{(by \eqref{eq:com.rel3}, \eqref{eq:com.rel1}, \eqref{eq:com.rel5})} \\
h(a) \left(m_rw_\lambda\right) {}&{} \in F(r) \quad \textup{for all $h \in \lie h$, $a > 0$}
& \textup{(by \eqref{eq:com.rel4}, \eqref{eq:com.rel2})} \\
h_2 \left(m_rw_\lambda\right) {}&{} = (\lambda(h_2) - \wt (m_r)) w_\lambda
& \textup{(by \eqref{eq:com.rel4}, \eqref{eq:com.rel2})}.
\end{align*}
Denote the $\U(\lie g_0[t])$-submodule generated by image of $m_rw_\lambda$ in ${\rm Gr}(W(\lambda))$ by $\overline{W}$, and  notice that the center of $\g_0[t]$ acts by a scalar on $m_r w_\lambda$ and recall that $\lie g'_0$ is isomorphic to $\lie {sl}(2)$, then the relations above imply that $\overline{W}$ is in fact isomorphic to a quotient of the local graded Weyl $\U(\lie{sl}(2)[t])$-module of highest-weight $\lambda_2 - \wt (m_r)$.  Using the basis for local graded Weyl $\U(\lie{sl}(2)[t])$-modules given in \cite[Section~6]{CP01}, we see that $\overline{W}$ is generated by elements of the form $y_2(a_1)\cdots y_2(a_j) \left(m_rw_\lambda\right)$ with $0 \leq a_1 \leq \dotsb \leq a_j \leq \lambda_2-k-\ell-j$.  Thus, the first statement follows.

In order to prove the second statement, note that the cardinality of the set of the monomials in \eqref{eq:span.set.sl(1,2)} is an upper bound to $\dim W(\lambda)$. Therefore 
$$\dim W(\lambda) \le \sum_{\ell = 0}^{\lambda_2} \sum_{k = 0}^{\lambda_2 - \ell} \sum_{j = 0}^{\lambda_2 - \ell - k} \binom{\lambda_2}{\ell} \binom{\lambda_2 - \ell}{k} \binom{\lambda_2 - \ell -  k}{j} = 4^{\lambda_2}$$
where the last equality follows by repeatedly using \eqref{e:binomial}.
\end{proof}

Notice that the result of Proposition~\ref{prop:Weyl_basis} also holds if we exchange $y_3$ and $x_1$ in the monomials \eqref{eq:span.set.sl(1,2)}.  That is, $W(\lambda)$ is also spanned by the following elements
\begin{equation*}
y_2(a_1) \dotsm y_2(a_j) y_3(b_1) \dotsm y_3(b_k) x_1(c_1) \dotsm x_1(c_\ell)w_\lambda,\end{equation*}
where $0 \leq c_1 < \cdots < c_\ell \leq \lambda_2-1$, $0 \leq b_1 < \cdots < b_k \leq \lambda_2-\ell-1$, $0 \leq a_1 \leq \cdots \leq a_j \leq \lambda_2-\ell-k-j$.

\subsection{Fusion products} \label{ss:fusion}

Consider the grading $\U(\g[t]) = \bigoplus_{s \ge 0} \U(\g[t])[s]$ induced by the usual grading of $\C[t]$ given by degrees in $t$, that is, $\U(\g[t])[s]$ is defined to be the subspace of $\U(\g[t])$ spanned by elements of the form $X_1(a_1) \dotsm X_r(a_r)$, where $X_1, \dotsc, X_r \in \g$ and $a_1 + \dotsb + a_r = s$.  This grading induces a filtration on any cyclic $\g[t]$-module.  In fact, let $V$ be a cyclic $\g[t]$-module and $v \in V$ be a cyclic vector.  For each $n \ge 0$, define
\begin{equation}\label{eq:filt.cycl.mod}
V^{\leq n} := \sum_{i = 0}^n \U(\g[t])[i] v.
\end{equation}
Notice that $V^{\leq 0} = \U(\g)v$, $V^{\leq n} \subseteq V^{\leq n+1}$ (for all $n \ge 0$), and $V =\sum_{n\ge 0} V^{\leq n}$.  (Notice that this filtration depends on the choice of $v$.)  Now, for each $n > 0$, let $V[n] := V^{\leq n}/V^{\leq n-1}$, let $V[0] = V^{\le 0}$, and let ${\rm Gr}_v(V) := \bigoplus_{n \ge 0} V[n]$ be the associated graded $\g[t]$-module.

Given a $\g$-module $V$ and a complex number $z\in \C$, let $V^z$ denote the associated evaluation module, that is, the $\g[t]$-module with underlying vector space $V$ and with $x(a) \in \g[t]$ acting on $v \in V$ by $x(a) v = z^a (x v)$.

If $V_1,\dotsc, V_n$ are cyclic $\g$-modules with cyclic homogeneous even vectors $v_1, \dotsc, v_n$, respectively, and $z_1, \dotsc, z_n$ are pairwise-distinct complex numbers, then the tensor product $V = V_1^{z_1} \otimes \dotsb \otimes V_n^{z_n}$ is a cyclic $\g[t]$-module with cyclic homogeneous even vector $v_1 \otimes \dotsb \otimes v_n$.

Using this notation, we now recall the definition of fusion products.  These modules are key to our study of local Weyl modules (see Theorem~\ref{thm:weyl.dim}), Chari-Venkatesh modules (see Theorem~\ref{thm:iso.vcsi}), as well as related to a conjecture of Feigin and Loktev \cite[Conjecture~1.8(i)]{FL99} (see Corollary~\ref{cor:iso.vcsi}\ref{cor:CV.indep}).

\begin{definition}
Let $V_1,\dotsc, V_n$ be cyclic $\g$-modules with cyclic vectors $v_1, \dotsc, v_n$ (respectively) and $z_1, \dotsc, z_n$ be pairwise-distinct complex numbers.  The \emph{fusion product} of $V_1, \dotsc, V_n$ with \emph{fusion parameters} $z_1, \dotsc, z_n$ is defined to be the graded $\g[t]$-module ${\rm Gr}_{v_1 \otimes \dotsb \otimes v_n} (V_1^{z_1} \otimes \dotsb \otimes V_n^{z_n})$.  We denote ${\rm Gr}_{v_1 \otimes \dotsb \otimes v_n} (V_1^{z_1} \otimes \dotsb \otimes V_n^{z_n})$ by $V^{z_1}_1 \ast \dotsm \ast V^{z_n}_n$ and any vector $w_1 \otimes \dotsb \otimes w_n \in V_1^{z_1} \otimes \dotsb \otimes V_n^{z_n}$ will be denoted by $w_1 * \dotsb * w_n$ when regarded as a vector of $V^{z_1}_1 \ast \dots \ast V^{z_n}_n$.
\end{definition}

Observe that, for every $n > 0$ and every permutation $\sigma$ in the symmetric group $S_n$, the isomorphism between the $\g[t]$-modules $V^{z_1}_1 \otimes \dots \otimes V^{z_n}_n$ and $V^{z_{\sigma(1)}}_{\sigma(1)} \otimes \dots \otimes V^{z_{\sigma(n)}}_{\sigma(n)}$ induces an isomorphism between the $\g[t]$-modules $V^{z_1}_1 \ast \dots \ast V^{z_n}_n$ and $V^{z_{\sigma(1)}}_{\sigma(1)} \ast \dots \ast V^{z_{\sigma(n)}}_{\sigma(n)}$.

\begin{remark} \label{rmk:vanishing.fusion}
In the particular case where $V = V_1^{z_1} \otimes \dotsb \otimes V_n^{z_n}$, $V_1,\dotsc, V_n$ are cyclic $\g$-modules with cyclic homogeneous even vectors $v_1, \dotsc, v_n$ (respectively), $v = v_1 \otimes \dotsb \otimes v_n$, and $z_1, \dotsc, z_n$ are pairwise-distinct complex numbers, one can check that, for all homogeneous element $x \in \g$ and $s \ge n$, we have
\[
(x \otimes t^s) v
= (x \otimes t^{s-n}(t-z_1)\dotsm(t-z_n))v
= 0 \ \in \ V^{z_1}_1 \ast \dotsm \ast V^{z_n}_n.
\qedhere
\]
\end{remark}

\subsection{The structure of local Weyl modules}

In this subsection we describe the structure of local Weyl modules.  We begin with a result relating graded local Weyl modules to fusion products.

\begin{theorem} \label{thm:weyl.dim}
Let $\lambda = (\lambda_1, \lambda_2) \in P^+$, $z_1, \dotsc, z_{\lambda_2}$ be pairwise-distinct complex numbers, and $\kappa_1, \dotsc, \kappa_{\lambda_2}$ be complex numbers satisfying $\kappa_1 + \dotsb + \kappa_{\lambda_2} = \lambda_1$.  There is an isomorphism of $\g[t]$-modules
\[
W(\lambda) \cong K(\kappa_1,1)^{z_1} \ast \dotsb \ast K(\kappa_{\lambda_2},1)^{z_{\lambda_2}}.
\]
In particular, $\dim W(\lambda) = 4^{\lambda_2}$.
\end{theorem}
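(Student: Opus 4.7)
The plan is to construct a surjective $\g[t]$-module homomorphism from $W(\lambda)$ onto the fusion product $F := K(\kappa_1,1)^{z_1}\ast\cdots\ast K(\kappa_{\lambda_2},1)^{z_{\lambda_2}}$ sending $w_\lambda$ to $v := v_1\ast\cdots\ast v_{\lambda_2}$, where $v_i$ is the highest-weight generator of $K(\kappa_i,1)$ (which lies in $P^+$ since $1\in\Z_{\ge1}$), and then upgrade it to an isomorphism via a dimension count.

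To produce the map, I would verify that $v$ satisfies the defining relations \eqref{eq:defn.rels.wpsi} with $\psi=\lambda$. The relations $(x\otimes t^s)v=0$ for $x\in\n^+$ and $s\ge 0$ hold already in the tensor product, because $x v_i=0$ for each $i$. The Cartan relation $hv=\lambda(h)v$ for $h\in\h$ follows from additivity of weights together with $\sum_i\kappa_i=\lambda_1$ and $\sum_i 1=\lambda_2$. The vanishing $y_2^{\lambda_2+1}v=0$ is a pigeonhole argument: since $y_2^2 v_i=0$ in $K(\kappa_i,1)$ (by the defining relation of the Kac module in weight $(\kappa_i,1)$), the Leibniz expansion of $y_2^{\lambda_2+1}$ on $v_1\otimes\cdots\otimes v_{\lambda_2}$ forces some tensor factor to absorb at least two applications of $y_2$, so every summand vanishes. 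The delicate step is the graded condition $(h\otimes t^s)v=0$ for $h\in\h$, $s\ge 1$: in the tensor product one has $(h\otimes t^s)(v_1\otimes\cdots\otimes v_{\lambda_2})=\bigl(\sum_i z_i^s\lambda_i(h)\bigr)(v_1\otimes\cdots\otimes v_{\lambda_2})\in V^{\le 0}$, but since $h\otimes t^s$ has degree $s$, its action on the class of $v$ in $V[0]$ lands in $V[s]=V^{\le s}/V^{\le s-1}$; because $V^{\le 0}\subseteq V^{\le s-1}$, this class vanishes in $F$.

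By the universal property of $W(\lambda)$ (Definition~\ref{defn:local.weyl}), these relations produce a $\g[t]$-module homomorphism $W(\lambda)\to F$ sending $w_\lambda$ to $v$, and this map is surjective since $v$ generates $F$ (inherited from the cyclicity of $v_1\otimes\cdots\otimes v_{\lambda_2}$ in the tensor product via the definition of ${\rm Gr}_v$). To close the argument, note that the underlying vector space of $F$ agrees with that of the tensor product $\bigotimes_{i=1}^{\lambda_2}K(\kappa_i,1)^{z_i}$, and by Proposition~\ref{prop:dim.k2.sl12} each factor has dimension $4$, giving $\dim F=4^{\lambda_2}$. Proposition~\ref{prop:Weyl_basis} supplies the reverse bound $\dim W(\lambda)\le 4^{\lambda_2}$. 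Combining these inequalities forces the surjection to be an isomorphism and establishes $\dim W(\lambda)=4^{\lambda_2}$.

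The principal subtlety is the graded Cartan relation $(h\otimes t^s)v=0$: it is the only step where one genuinely uses that $F$ is the associated graded module rather than the tensor product itself, and it is also the step that explains why the statement is insensitive to the choice of fusion parameters $z_i$ and of the decomposition $\lambda_1=\sum_i\kappa_i$. The remaining verifications are routine consequences of the tensor product structure together with the defining relations of $W(\lambda)$ and of the Kac modules $K(\kappa_i,1)$.
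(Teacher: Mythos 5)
Your proposal is correct and follows essentially the same route as the paper: verify that the image of the highest-weight vector in the fusion product satisfies the defining relations of $W(\lambda)$ (with the graded Cartan relation $(h\otimes t^s)v=0$ handled exactly as you do, by passing to the associated graded module), then combine the upper bound $\dim W(\lambda)\le 4^{\lambda_2}$ from Proposition~\ref{prop:Weyl_basis} with $\dim K(\kappa_i,1)=4$ from Proposition~\ref{prop:dim.k2.sl12} to conclude. The only cosmetic point is that $\lambda_i(h)$ in your graded Cartan computation should read $\mu_i(h)$ with $\mu_i=(\kappa_i,1)$.
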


\begin{proof}
Recall from Proposition~\ref{prop:Weyl_basis} that $\dim W(\lambda) \le 4^{\lambda_2}$ and from Proposition~\ref{prop:dim.k2.sl12} that $\dim K(\mu) = 4\mu_2$ (since we have fixed $\lie b = \lie b_{\scriptscriptstyle (2)}$).  Hence, if we prove that there is a surjective homomorphism of $\g[t]$-modules $W(\lambda) \twoheadrightarrow K(\kappa_1,1)^{z_1} \ast \dotsb \ast K(\kappa_{\lambda_2},1)^{z_{\lambda_2}}$, the result follows.

For each $i \in \{1, \dotsc, \lambda_2\}$, let $\mu_i$ denote the weight $(\kappa_i, 1)\in P^+$.  Notice that, in order to prove that there is a surjective homomorphism of $\g[t]$-modules $W(\lambda) \twoheadrightarrow K(\mu_1)^{z_1} \ast \dotsb \ast K(\mu_{\lambda_2})^{z_{\lambda_2}}$, it suffices to show that the vector $k := k_{\mu_1} * \dotsb * k_{\mu_{\lambda_2}}$ satisfies the defining relations \eqref{eq:defn.rels.wpsi}.

Using the comultiplication of $\U(\g[t])$, one can check that
	\[
\n^+[t]k = 0, \qquad
hk = \lambda(h)k \quad \textup{for all $h \in \h$}, \qquad
y_2^{\lambda_2+1} k = 0.
	\]
Moreover, for all $h \in \h$ and $s > 0$, we have:
\[
(h \otimes t^s) (k_{\mu_1} \otimes \dotsb \otimes k_{\mu_{\lambda_2}})
= \sum_{i=1}^{\lambda_2} k_{\mu_1} \otimes \dotsb \otimes (h \otimes t^s) k_{\mu_i} \otimes \dotsb \otimes k_{\mu_{\lambda_2}}
= \sum_{i=1}^{\lambda_2} z_i^s \mu_i(h) \ (k_{\mu_1} \otimes \dotsb \otimes k_{\mu_{\lambda_2}}).
\]
Since $(h \otimes t^s) \in \U(\g[t])[s]$, $k_{\mu_1} \otimes \dotsb \otimes k_{\mu_{\lambda_2}} \in V[0]$ and $(h \otimes t^s) (k_{\mu_1} \otimes \dotsb \otimes k_{\mu_{\lambda_2}}) \in V^{\le s-1}$, it follows that, in the fusion product, we have $(h \otimes t^s) k = 0$.
\end{proof}

Now we can use Theorem~\ref{thm:weyl.dim} to obtain interesting results on a more general class of local Weyl modules.  Notice that these local Weyl modules are not necessarily graded.  In fact, they are precisely those local Weyl modules whose associated graded modules are isomorphic to graded local Weyl modules.

\begin{lemma} \label{lem:dim.wpsi}
Let $\psi \in \h[t]^*$, assume that there exist $\mu_1, \dotsc, \mu_n \in P^+$ and pairwise-distinct complex number $z_1, \dotsc, z_n$ such that $\psi(h \otimes t^k) = z_1^k \mu_1(h) + \dotsb + z_n^k \mu_n(h)$ for all $h \in \h$, $k \ge 0$.  For each $i \in \{1, \dotsc, n\}$, denote by $\psi_i$ the unique linear functional in $\h[t]^*$ satisfying $h \otimes t^k \mapsto z_i^k \mu_i(h)$ for all $h \in \h$, $k \ge 0$.
\begin{enumerate}[(a), leftmargin=*]
\item \label{lem:dim.wpsi.a}
There exists an isomorphism of $\g[t]$-modules $W(\psi)\cong W(\psi_1) \otimes \dotsm \otimes W(\psi_n)$.

\item \label{lem:dim.wpsi.b}
For each $i \in \{1, \dotsc, n\}$, there exists an isomorphism of $\g$-modules $W(\psi_i) \cong W(\mu_i)$.

\item \label{lem:dim.wpsi.c}
$\dim W(\psi) = 4^{\psi(h_2)}$.
\end{enumerate}
\end{lemma}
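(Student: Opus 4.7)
The plan is to establish part~\ref{lem:dim.wpsi.b} first via a shift-of-variable automorphism of $\g[t]$, and then combine the resulting dimension count with an associated-graded argument to deduce parts~\ref{lem:dim.wpsi.a} and \ref{lem:dim.wpsi.c} simultaneously.

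For part~\ref{lem:dim.wpsi.b}, consider the $\C$-algebra automorphism $\tau_z \colon \g[t] \to \g[t]$ given by $x \otimes f(t) \mapsto x \otimes f(t+z)$. Twisting the $\g[t]$-action on $W(\mu_i)$ by $\tau_{z_i}^{-1}$ yields a cyclic $\g[t]$-module generated by $w_{\mu_i}$ on which $h \otimes t^k$ acts by $z_i^k \mu_i(h)$ and on which $y_2^{\mu_i(h_2)+1}$ still annihilates the generator; these are exactly the defining relations \eqref{eq:defn.rels.wpsi} of $W(\psi_i)$, so the twisted module is isomorphic to $W(\psi_i)$ as a $\g[t]$-module. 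Since $\tau_z$ fixes $\g = \g \otimes 1$ pointwise, the twist is invisible at the level of $\g$-modules, proving $W(\psi_i) \cong W(\mu_i)$ as $\g$-modules. In particular, Theorem~\ref{thm:weyl.dim} gives $\dim W(\psi_i) = 4^{\mu_i(h_2)}$.

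Next, build a surjection $\Phi \colon W(\psi) \twoheadrightarrow W(\psi_1) \otimes \cdots \otimes W(\psi_n)$ by checking that $w := w_{\psi_1} \otimes \cdots \otimes w_{\psi_n}$ satisfies the relations \eqref{eq:defn.rels.wpsi}. The relations $\n^+[t] \cdot w = 0$ and $(h \otimes t^k) w = \psi(h \otimes t^k) w$ are immediate from the coproduct, since $(h \otimes t^k) w = \sum_i w_{\psi_1} \otimes \cdots \otimes (h \otimes t^k) w_{\psi_i} \otimes \cdots \otimes w_{\psi_n} = \bigl( \sum_i z_i^k \mu_i(h) \bigr) w$. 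For $y_2^{\psi(h_2)+1} w = 0$, note that $\psi(h_2) = \sum_i \mu_i(h_2)$ and expand $\Delta(y_2^{\psi(h_2)+1})$ multinomially: every summand is of the form $y_2^{k_1} w_{\psi_1} \otimes \cdots \otimes y_2^{k_n} w_{\psi_n}$ with $\sum k_i = \psi(h_2)+1$, so by pigeonhole some $k_j > \mu_j(h_2)$ and that tensor factor vanishes.

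For the matching upper bound, consider the associated graded $\g[t]$-module ${\rm Gr}(W(\psi))$ of the filtration \eqref{eq:filt.cycl.mod} with cyclic vector $w_\psi$. Since $(h \otimes t^k) w_\psi = \psi(h \otimes t^k) w_\psi$ lies in $W^{\le 0}(\psi)$ while it must be read in grade $k$ of ${\rm Gr}(W(\psi))$, the image $\bar{w}_\psi$ is killed by $h \otimes t^k$ for every $k \ge 1$; the remaining relations of $W(\psi)$ descend to ${\rm Gr}(W(\psi))$ unchanged. Thus $\bar{w}_\psi$ satisfies the defining relations of the graded local Weyl module $W(\lambda)$ with $\lambda = \psi|_\h \in P^+$, so ${\rm Gr}(W(\psi))$ is a quotient of $W(\lambda)$, yielding
\[
\dim W(\psi) = \dim {\rm Gr}(W(\psi)) \le \dim W(\lambda) = 4^{\psi(h_2)}
\]
by Theorem~\ref{thm:weyl.dim}. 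Combining this bound with the existence of $\Phi$ and the identity $\prod_i 4^{\mu_i(h_2)} = 4^{\psi(h_2)}$ forces equality throughout, which simultaneously gives part~\ref{lem:dim.wpsi.c} and upgrades $\Phi$ to an isomorphism, proving part~\ref{lem:dim.wpsi.a}. The main technical point is checking that positive-power Cartan elements act trivially on $\bar{w}_\psi$ in the associated graded module; this is precisely what reduces the non-graded situation to the graded one already settled by Theorem~\ref{thm:weyl.dim}.
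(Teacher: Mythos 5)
Your proof is correct, and for part~\ref{lem:dim.wpsi.a} it takes a genuinely different route from the paper. The paper disposes of \ref{lem:dim.wpsi.a} by citing the support-decomposition result \cite[Proposition~8.18]{BCM19} together with the observation that $W(\psi_i)$ is supported at the single maximal ideal $(t-z_i)$; you instead build the map $W(\psi)\to W(\psi_1)\otimes\dotsb\otimes W(\psi_n)$ by hand from the coproduct and then squeeze dimensions, using the associated-graded surjection $W(\lambda)\twoheadrightarrow {\rm Gr}_{w_\psi}W(\psi)$ as the upper bound. That bound is exactly the first half of Theorem~\ref{thm:gr.wpsi.weyl}, whose proof in the paper does not use this lemma, so there is no circularity --- in fact your argument simultaneously re-proves the ``moreover'' clause of that theorem. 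Your part~\ref{lem:dim.wpsi.b} (twisting by $\tau_{z_i}$) is the same as the paper's, and \ref{lem:dim.wpsi.c} falls out of the dimension count either way. The one step you should make explicit is why $\Phi$ is \emph{surjective}: verifying that $w_{\psi_1}\otimes\dotsb\otimes w_{\psi_n}$ satisfies the relations \eqref{eq:defn.rels.wpsi} only yields a homomorphism onto the cyclic submodule it generates, so you need the tensor product to be cyclic on that vector. This holds because each $W(\psi_i)$ is annihilated by $\g\otimes(t-z_i)^{N_i}$ for some $N_i$, so the action on the tensor product factors through $\bigoplus_i \g\otimes\C[t]/\langle (t-z_i)^{N_i}\rangle$ by the Chinese Remainder Theorem (the same argument the paper uses in Section~\ref{ss:fusion} for evaluation modules), and a tensor product of cyclic modules over a direct sum of Lie algebras, each factor living over one summand, is cyclic. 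With that line added, the argument is complete; it is longer than the paper's but self-contained, whereas the paper's is shorter at the cost of importing \cite[Proposition~8.18]{BCM19} as a black box.
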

\begin{proof}
Recall first that for a ${\lie g}[t]$-module $M$, we define ${\rm Ann}_{\C[t]} M$ to be the sum of all ideals $I$ of $\C[t]$ such that $({\lie g}\otimes I)M=0$. Moreover, the support of $M$ is defined to be the set of all maximal ideals ${\mathsf m}$ of $\C[t]$ such that ${\rm Ann}_{\C[t]} M\subseteq {\mathsf m}$. Part~\ref{lem:dim.wpsi.a} follows from \cite[Proposition~8.18]{BCM19} along with the fact that the support of $W(\psi_i)$ equals the maximal ideal $(t-z_i)$ for all $i=1,\ldots, n$.

In order to prove part~\ref{lem:dim.wpsi.b}, for each $z \in \C$, let $\tau_z \colon \g[t] \to \g[t]$ be the unique linear map that satisfies $\tau_z (x \otimes t^k) = x \otimes (t-z)^k$ for all $x \in \g$, $k \in \Z_{\ge0}$.  Notice that $\tau_z$ is in fact an automorphism of Lie algebras for all $z \in \C$ ($\tau_z^{-1}$ being $\tau_{-z}$).  Given a representation $\rho \colon \g[t] \to \lie{gl}(V)$ denote by $\tau_z^*V$ the $\g[t]$-module corresponding to the representation $(\rho \circ \tau_z) \colon \g[t] \to \lie{gl}(V)$.  Since $\tau_z \arrowvert_\g = {\rm id}_\g$, it follows that $V \cong \tau_z^*V$ as $\g$-modules.  Now, to finish the proof of part~\ref{lem:dim.wpsi.b}, one can check that $W(\mu_i) \cong \tau_{z_i}^* W(\psi_i)$ for all $i \in \{1, \dotsc, n\}$.

Part~\ref{lem:dim.wpsi.c} follows from part~\ref{lem:dim.wpsi.a}, part~\ref{lem:dim.wpsi.b}, Theorem~\ref{thm:weyl.dim} and the fact that 
\[
\mu_1(h_2) + \dotsb + \mu_n(h_2)
= \psi_1(h_2) + \dotsb + \psi_n(h_2)
= \psi(h_2).
\qedhere
\]
\end{proof}

Recall, from Section~\ref{ss:fusion}, the definition of associated graded modules, such as ${\rm Gr}_{w_\psi} W(\psi)$.  The next result relates different local Weyl modules.  Coupled with Theorem~\ref{thm:weyl.dim}, it will enable us to describe the structure of a large class of local Weyl modules.

\begin{theorem} \label{thm:gr.wpsi.weyl}
Let $\psi \in \h[t]^*$, assume that $\psi \arrowvert_\h \in P^+$ and denote it by $\lambda$.  There exists a surjective homomorphism of $\g[t]$-modules:
\[
W(\lambda) \twoheadrightarrow {\rm Gr}_{w_\psi} W(\psi)
\quad \textup{given by} \quad
w_\lambda \mapsto w_\psi.
\]
Moreover, such a homomorphism is actually an isomorphism  if there exist $\mu_1, \dotsc, \mu_n \in P^+$ and pairwise-distinct $z_1, \dotsc, z_n \in \C$ such that $\psi(h \otimes t^k) = z_1^k \mu_1(h) + \dotsb + z_n^k \mu_n(h)$ for all $h \in \h$ and $k \ge 0$.
\end{theorem}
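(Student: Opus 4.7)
The plan is to first construct the surjection and then upgrade it to an isomorphism via a dimension count. For the surjection, I would verify that the image of $w_\psi$ in $\textup{Gr}_{w_\psi} W(\psi)$, which sits in degree zero, satisfies the defining relations of $W(\lambda)$ listed in Definition~\ref{defn:local.weyl}. The relations $\n^+[t] w_\psi = 0$ and $y_2^{\lambda_2+1} w_\psi = 0$ descend from $W(\psi)$ (using $\psi(h_2) = \lambda_2$), and for $h \in \h$ we have $h w_\psi = \psi(h) w_\psi = \lambda(h) w_\psi$, since $\psi\arrowvert_\h = \lambda$ by hypothesis.

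The crucial point is the new relation $(h \otimes t^k) w_\psi = 0$ for $h \in \h$ and $k \geq 1$, which is exactly what differentiates the graded $W(\lambda)$ from a typical $W(\psi)$. In $W(\psi)$ itself, $(h \otimes t^k) w_\psi = \psi(h \otimes t^k) w_\psi$ is a scalar multiple of $w_\psi$, hence lies in $W(\psi)^{\leq 0}$. On the other hand, $(h \otimes t^k) \in \U(\g[t])[k]$, so its action takes $W(\psi)[0]$ into $W(\psi)[k] = W(\psi)^{\leq k}/W(\psi)^{\leq k-1}$. Since $k \geq 1$ gives $W(\psi)^{\leq 0} \subseteq W(\psi)^{\leq k-1}$, the image of $(h \otimes t^k) w_\psi$ in the degree $k$ component is zero. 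Combined with the relations verified above, the universal property of $W(\lambda)$ (see \cite[Proposition~8.4]{BCM19} or the defining presentation itself) produces the claimed surjection $W(\lambda) \twoheadrightarrow \textup{Gr}_{w_\psi} W(\psi)$ sending $w_\lambda \mapsto w_\psi$.

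For the isomorphism statement, I would finish by comparing dimensions. Under the additional hypothesis on $\psi$, Lemma~\ref{lem:dim.wpsi}\ref{lem:dim.wpsi.c} gives $\dim W(\psi) = 4^{\psi(h_2)} = 4^{\lambda_2}$. Since $\textup{Gr}_{w_\psi} W(\psi)$ is a regrading of $W(\psi)$ and thus isomorphic to it as a vector space, we obtain $\dim \textup{Gr}_{w_\psi} W(\psi) = 4^{\lambda_2}$. Theorem~\ref{thm:weyl.dim} supplies $\dim W(\lambda) = 4^{\lambda_2}$, so the surjection of the previous paragraph is a surjection between finite-dimensional vector spaces of equal dimension, and is therefore an isomorphism. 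The only delicate point in the whole argument is the degree-shifting observation that forces the positive-degree Cartan action to vanish in the associated graded module; once this is in place, the rest is a universal-property argument combined with already-established dimension formulas.
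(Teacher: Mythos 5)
Your proposal is correct and follows essentially the same route as the paper: both verify that $w_\psi$ satisfies the defining relations of $w_\lambda$ in the associated graded module, with the key observation being that $(h \otimes t^k) w_\psi = \psi(h \otimes t^k) w_\psi$ lies in filtration degree $0 \leq k-1$ and hence vanishes in degree $k$ of ${\rm Gr}_{w_\psi} W(\psi)$, and both then conclude the isomorphism by comparing $\dim W(\lambda) = 4^{\lambda_2}$ from Theorem~\ref{thm:weyl.dim} with $\dim W(\psi) = 4^{\psi(h_2)}$ from Lemma~\ref{lem:dim.wpsi}\ref{lem:dim.wpsi.c}. No gaps.
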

\begin{proof}
In order to prove the first statement, it is enough to check that $w_\psi \in {\rm Gr}_{w_\psi} W(\psi)$ satisfies the defining relations \eqref{eq:defn.rels.wpsi} of $w_\lambda$.  It follows from the definition of $W(\psi)$ that
\[
\n^+[t] w_\psi = 0, \qquad 
h w_\psi = \psi(h) w_\psi \quad \textup{for all $h \in \h$},
\qquad \textup{and} \qquad
y_2^{\psi(h_2)+1} w_\psi = 0.
\]
Moreover, for all $h \in \h$ and $s > 0$, we have
\[
(h \otimes t^s) w_\psi
= \psi(h \otimes t^s) w_\psi \in \sum_{i = 0}^{s-1} \U(\g[t])[i] w_\psi.
\]
Hence, in ${\rm Gr}_{w_\psi} W(\psi)$, we have that $(h \otimes t^s) w_\psi = 0$.  This proves the first part.

Now, since $\dim {\rm Gr}_{w_\psi} W(\psi) = \dim W(\psi)$ and $\dim W(\lambda) = 4^{\lambda_2}$ (Theorem~\ref{thm:weyl.dim}), the first part implies that $\dim W(\psi) \le 4^{\lambda_2}$.  Under the hypothesis of the \emph{moreover} part, we have from Lemma~\ref{lem:dim.wpsi}\ref{lem:dim.wpsi.c} that $\dim W(\psi) = 4^{\psi(h_2)}$.  Since $\lambda_2 = \lambda(h_2) = \psi(h_2)$, the result follows.
\end{proof}

\begin{corollary} \label{cor:weyl.dim}
Let $\psi \in \h[t]^*$, and assume that there exist $\mu_1, \dotsc, \mu_n \in P^+$ and pairwise-distinct $z_1, \dotsc, z_n \in \C$ such that $\psi(h \otimes t^k) = z_1^k \mu_1(h) + \dotsb + z_n^k \mu_n(h)$ for all $h \in \h$, $k \ge 0$.

\begin{enumerate}[(a), leftmargin=*, itemsep=1ex]
\item \label{cor:basis.weyl}
The elements $y_2(a_1) \dotsm y_2(a_j) x_1(b_1) \dotsm x_1(b_k) y_3(c_1) \dotsm y_3(c_\ell) w_\psi$, such that $0 \leq c_1 < \cdots < c_\ell \leq \psi(h_2)-1$, $0 \leq b_1 < \cdots < b_k \leq \psi(h_2)-\ell-1$, $0 \leq a_1 \leq \cdots \leq a_j \leq \psi(h_2)-\ell-k-j$, form a basis of $W(\psi)$.

\item \label{cor:weyl.dim.b}
The character of $W(\psi)$ is given by
	\[
\ch W(\psi) = e^{(\psi(h_1),\, 0)} \left( e^{(-1,-1)} + e^{(-1,0)} + e^{(0,0)} + e^{(0,1)} \right)^{\psi(h_2)}.
	\]

\item \label{cor:embeddings.weyl}
If we denote $\psi \arrowvert_\h = \lambda = (\lambda_1, \lambda_2) \in P^+$, then there are well-defined embeddings of $\g[t]$-modules:
\begin{gather}
W(\lambda) \hookrightarrow W(\lambda_1+1, \, \lambda_2+1)
\quad \textup{given by} \quad
w_\lambda \mapsto y_3(\lambda_2)w_{_{(\lambda_1+1, \lambda_2+1)}}  \label{eq:embedding.y3}, \\
W(\lambda) \hookrightarrow W(\lambda_1, \, \lambda_2+1)
\quad \textup{given by} \quad
w_\lambda \mapsto x_1(\lambda_2)w_{_{(\lambda_1, \lambda_2+1)}}.  \label{eq:embedding.x1}
\end{gather}
\end{enumerate}
\end{corollary}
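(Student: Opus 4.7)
My plan has three parts, one for each of (a), (b), (c), with part (a) foundational for the others.

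For part (a), I will adapt the proof of Proposition~\ref{prop:Weyl_basis} to the general $W(\psi)$. That proof uses the grading of $W(\lambda)$ only through the identity $h(a)w_\lambda = 0$ for $a > 0$; under our hypothesis $h(a)w_\psi = \psi(h \otimes t^a) w_\psi$ remains a scalar multiple of $w_\psi$, which suffices for the filtration argument via \eqref{eq:the.filtration}, since this scalar action commutes past $\U(\g_0[t])$-action modulo terms already in $F(r)$. Hence the listed elements span $W(\psi)$; since $\dim W(\psi) = 4^{\psi(h_2)}$ by Lemma~\ref{lem:dim.wpsi}\ref{lem:dim.wpsi.c} matches the cardinality of the spanning set (computed via \eqref{e:binomial} as in the proof of Proposition~\ref{prop:Weyl_basis}), it is a basis. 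For part (b), the character is computed directly from this basis: each element has $\h$-weight $(\lambda_1 - j - \ell,\, \lambda_2 - 2j - k - \ell)$, independent of the specific indices, and the number of basis elements with fixed $(j, k, \ell)$ is the multinomial coefficient $\binom{\lambda_2}{j,\, \ell,\, k,\, \lambda_2 - j - \ell - k}$; summing matches the multinomial expansion of $e^{(\lambda_1, 0)}(e^{(-1,-1)} + e^{(-1,0)} + e^{(0,0)} + e^{(0,1)})^{\lambda_2}$ under the correspondence $y_2 \leftrightarrow e^{(-1,-1)}$, $y_3 \leftrightarrow e^{(-1,0)}$, $x_1 \leftrightarrow e^{(0,0)}$, and empty slot $\leftrightarrow e^{(0,1)}$.

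For the first embedding in part (c), I set $\lambda' = (\lambda_1 + 1, \lambda_2 + 1)$ and $w' = y_3(\lambda_2) w_{\lambda'}$, and verify that $w'$ satisfies the defining relations \eqref{eq:defn.rels.wpsi} of $W(\lambda)$. Routine commutator calculations via Lemma~\ref{lem:comm.rels}, together with the vanishing $y_3(a) w_{\lambda'} = 0$ for $a \ge \lambda_2 + 1$ (obtained from $y_3 = [y_2, y_1]$ and the Garland identity \eqref{eq:garland}, as in the proof of Proposition~\ref{prop:Weyl_basis}), show that $\n^+[t] w' = 0$ and $(h \otimes t^m) w' = \delta_{m,0} \lambda(h) w'$ for all $h \in \h$, $m \ge 0$. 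The remaining relation $y_2^{\lambda_2 + 1} w' = 0$ does not follow from commutators (since $y_2$ and $y_3(\lambda_2)$ commute, while $y_2^{\lambda_2+1} w_{\lambda'} \ne 0$ in $W(\lambda')$); instead, $w'$ is annihilated by $x_2$ and has $h_2$-weight $\lambda_2$, so the $\g_0' \cong \fsl(2)$-submodule of the finite-dimensional module $W(\lambda')$ generated by $w'$ is a quotient of $L_\fsl(\lambda_2)$, giving the relation. The universal property of $W(\lambda)$ then yields a homomorphism $\phi \colon W(\lambda) \to W(\lambda')$ with $\phi(w_\lambda) = w'$.

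Injectivity of $\phi$ follows from part (a): the basis element $y_2(a_1) \cdots y_2(a_j) x_1(b_1) \cdots x_1(b_k) y_3(c_1) \cdots y_3(c_\ell) w_\lambda$ of $W(\lambda)$ maps to $y_2(a_1) \cdots y_2(a_j) x_1(b_1) \cdots x_1(b_k) y_3(c_1) \cdots y_3(c_\ell) y_3(\lambda_2) w_{\lambda'}$, which, since $c_\ell < \lambda_2$, is itself a basis element of $W(\lambda')$ (with $\ell+1$ $y_3$'s and $\lambda_2$ as the largest $y_3$-index, all constraints verified by a direct check). Distinct basis elements of $W(\lambda)$ go to distinct basis elements of $W(\lambda')$, so $\phi$ is injective. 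The second embedding $W(\lambda) \hookrightarrow W(\lambda_1, \lambda_2 + 1)$ given by $w_\lambda \mapsto x_1(\lambda_2) w_{(\lambda_1, \lambda_2 + 1)}$ is proved analogously, except that for the injectivity step one must use the alternative basis of $W(\lambda_1, \lambda_2 + 1)$ from the remark after Proposition~\ref{prop:Weyl_basis} (with $x_1$'s placed to the right of $y_3$'s) so that the extra $x_1(\lambda_2)$ joins the existing $x_1$'s in increasing order without violating the constraint $b_k \le \psi(h_2) - \ell - 1$. The main obstacle will be establishing $y_2^{\lambda_2 + 1} w' = 0$, which requires the $\lie{sl}(2)$-submodule argument rather than a direct commutator identity.
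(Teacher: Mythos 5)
Your parts (b) and (c) are essentially sound: the multinomial weight count in (b) is a correct (and more direct) alternative to the paper's route through characters of Kac modules, and your treatment of (c) --- including the observation that $y_2^{\lambda_2+1}w'=0$ must come from the $\fsl(2)$ highest-weight argument rather than from commutators, and the basis-to-basis injectivity check --- matches the paper's proof in substance. The genuine gap is in part (a), on which (b) and the injectivity step of (c) both rest. Your claim that the proof of Proposition~\ref{prop:Weyl_basis} ``uses the grading only through $h(a)w_\lambda=0$'' and therefore survives when this becomes $h(a)w_\psi=\psi(h\otimes t^a)w_\psi$ is not correct. The grading enters essentially through the Garland identity \eqref{eq:garland}: its error term lies in $\U(\g[t])(\n^+[t]\oplus t\h[t])$, and it is the \emph{vanishing} of $t\h[t]$ on $w_\lambda$ --- not merely scalar action --- that turns $x_2(1)^{(s)}y_2^{(r+s)}w_\lambda$ into $\pm\mathbf y_2(r,s)w_\lambda$ and hence yields $y_2(a)w_\lambda=0$ and $y_3(c)w_\lambda=0$ for $a,c\ge\lambda_2$, and $x_1(b)\,y_3(c_1)\dotsm y_3(c_\ell)w_\lambda=0$ for $b\ge\lambda_2-\ell$. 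Those vanishings are what bound $c_\ell$ and $b_k$ and make the set $\mathcal F$ finite, which is a prerequisite for even writing down the filtration \eqref{eq:the.filtration}.

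For a genuinely non-graded $W(\psi)$ these vanishings are false. Take $n=1$, $z_1\neq 0$, $\mu_1=\lambda$ with $\lambda_2\ge 1$: then $W(\psi)\cong\tau_{-z_1}^*W(\mu_1)$, so $y_3(c)w_\psi$ corresponds to $y_3\otimes(t+z_1)^c\,w_{\mu_1}=\sum_{j}\binom{c}{j}z_1^{c-j}y_3(j)w_{\mu_1}$, which is non-zero for every $c\ge 0$ (the coefficient of the basis vector $y_3(0)w_{\mu_1}$ is $z_1^c$); likewise $y_2(a)w_\psi\neq 0$ for all $a$. Hence $\mathcal F$ is infinite and your adaptation cannot get started. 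What is true is that these elements are rewritable in terms of monomials of lower $t$-degree, and that is exactly what the paper's detour buys: Theorem~\ref{thm:gr.wpsi.weyl} gives ${\rm Gr}_{w_\psi}W(\psi)\cong W(\lambda)$, in $W(\lambda)$ the monomials \eqref{eq:span.set.sl(1,2)} are a basis by Proposition~\ref{prop:Weyl_basis} together with the dimension count of Theorem~\ref{thm:weyl.dim}, and since each monomial is homogeneous for the $t$-degree, the standard filtered-to-graded transfer returns a basis of $W(\psi)$ itself. You should replace your direct adaptation of Proposition~\ref{prop:Weyl_basis} by this argument; once that is done, the rest of your proposal goes through.
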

\begin{proof}
Part~\ref{cor:basis.weyl} follows from Theorem~\ref{thm:weyl.dim} and Theorem~\ref{thm:gr.wpsi.weyl}.

To prove part~\ref{cor:weyl.dim.b}, notice that it follows from Lemma~\ref{lem:kac.iso} that $\ch K(\kappa_{\lambda_i}, 1) = \ch K_{{\lie b}_{\scriptscriptstyle (1)}}(\kappa_{\lambda_i}, 0)$ or $\ch K(\kappa_{\lambda_i}, 1) = \ch K_{{\lie b}_{\scriptscriptstyle (3)}}(\kappa_{\lambda_i}-1, 0)$.  Now, for all $\mu \in \h^*$, we have:
	\[
\ch K_{{\lie b}_{\scriptscriptstyle (1)}}(\mu) = \ch L_{\gl}(\mu) \ch \Lambda(\g_{-1}) =  \ch L_{\gl}(\mu) \left( e^{(0,0)}+e^{(0,1)}+ e^{(-1,-1)}+e^{(-1,0)} \right)
	\]
and 
	\[
\ch K_{{\lie b}_{\scriptscriptstyle (3)}}(\mu) = \ch L_{\gl}(\mu) \ch \Lambda(\g_{1}) =  \ch L_{\gl}(\mu) \left( e^{(0,0)}+e^{(0,-1)} + e^{(1,1)}+e^{(1,0)} \right). 
	\]
Then part~\ref{cor:weyl.dim.b} follows from Theorem~\ref{thm:weyl.dim}, Theorem~\ref{thm:gr.wpsi.weyl}, and a straight-forward calculation.

In order to prove \eqref{eq:embedding.y3}, first, one can check that the element $y_3(\lambda_2)w_{_{(\lambda_1+1, \lambda_2+1)}} \in W(\lambda_1+1, \lambda_2+1)$ satisfies the defining relations \eqref{eq:defn.rels.wpsi} of $w_\lambda$.  Thus, there is a well-defined homomorphism of $\g[t]$-modules $W(\lambda) \to W(\lambda_1+1, \, \lambda_2+1)$ mapping $w_\lambda$ to $y_3(\lambda_2)w_{_{(\lambda_1+1, \lambda_2+1)}}$.  Moreover, using the explicit basis \eqref{eq:span.set.sl(1,2)} for $W(\lambda_1+1, \lambda_2+1)$, one can see that the $\g[t]$-submodule $\U(\g[t])y_3(\lambda_2)w_{_{(\lambda_1+1, \lambda_2+1)}}$ contains non-zero images of all the basis elements of $W(\lambda)$.  This proves \eqref{eq:embedding.y3}.  The proof of \eqref{eq:embedding.x1} is similar.
\end{proof}

\begin{remark}
Notice that the hypotheses of Lemma~\ref{lem:dim.wpsi}, Theorem~\ref{thm:gr.wpsi.weyl} and Corollary~\ref{cor:weyl.dim} cover all finite-dimensional non-zero local Weyl modules but those whose irreducible quotient is not an evaluation module.  The key for proving the remaining cases could be a better description of the finite-dimensional irreducible generalized evaluation modules that are not evaluation modules.
\end{remark}

\section{Chari-Venkatesh modules} \label{s:v.csi}

In this section, our goal is to better understand the structure of local Weyl modules and the relationship between these modules and general fusion products of Kac modules.  In order to do that, we define analogues of certain modules introduced by Chari and Venkatesh in \cite[Section~2.2]{CV15}.

\begin{definition} \label{defn:vcsi}
Let $\lambda \in P^+$.  Given a partition $\xi = (\xi_0, \dotsc, \xi_d)$ of $\lambda_2 \in \Z_{\ge0}$ (that is, a sequence of positive integers such that $\xi_0 + \dotsb + \xi_d = \lambda_2$ and $\xi_0 \ge \dotsb \ge \xi_d$), denote the pair $(\lambda_1, \xi)$ by $\pmb \xi$, and define the \emph{Chari-Venkatesh module} $V(\pmb\xi)$ to be the quotient of the local Weyl module $W (\lambda)$ by the extra relations
\begin{equation} \label{eq:def.rel.vcsi}
(x_2 \otimes t)^s y_2^{r+s} \qquad
\textup{for all} \quad s > kr + \xi_{k+1} + \dotsb + \xi_d, \ 
r, s > 0, \ d \ge k \ge 0.
\end{equation}
Notice that $V(\pmb\xi)$ is a cyclic $\g[t]$-module generated by the image of $w_\lambda$ onto $V(\pmb\xi)$, which we denote by $v_{\pmb\xi}$.
\end{definition}

\begin{lemma} \label{lem:vcsi.basc}
Let $\lambda = (\lambda_1, \lambda_2) \in P^+$.
\begin{enumerate}[(a), leftmargin=*]
\item \label{lem:vcsi.basc.a} 
If $\xi = (\lambda_2)$, then $V(\pmb\xi) = K (\lambda)^0$.

\item \label{lem:vcsi.basc.b}
If $\xi = (1, \dotsc, 1) =: (\underline 1^{\lambda_2})$, then $V(\pmb\xi) = W(\lambda)$.
\end{enumerate}
\end{lemma}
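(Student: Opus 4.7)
My plan is to handle the two parts separately, with the substantive content in part~\ref{lem:vcsi.basc.a}.

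For part~\ref{lem:vcsi.basc.b}, I would show that the extra relations in Definition~\ref{defn:vcsi} are already consequences of the defining relations~\eqref{eq:defn.rels.wpsi} of $W(\lambda)$. With $\xi = (\underline 1^{\lambda_2})$ we have $d = \lambda_2 - 1$ and $\xi_{k+1} + \dotsb + \xi_d = \lambda_2 - 1 - k$, so the inequality $s > kr + (\lambda_2 - 1 - k)$ rewrites as $s \geq k(r-1) + \lambda_2$. Since $k \geq 0$ and $r \geq 1$, this forces $s \geq \lambda_2$ and hence $r + s \geq \lambda_2 + 1$; combined with $y_2^{\lambda_2 + 1} w_\lambda = 0$ this yields $(x_2 \otimes t)^s y_2^{r+s} w_\lambda = 0$ automatically, so $V(\pmb\xi) = W(\lambda)$.

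For part~\ref{lem:vcsi.basc.a}, the partition $\xi = (\lambda_2)$ has $d = 0$, so only $k = 0$ contributes a non-vacuous constraint and the extra relations reduce to $(x_2 \otimes t)^s y_2^{r+s} v_{\pmb\xi} = 0$ for all $r, s \geq 1$. I would construct mutually inverse surjections between $V(\pmb\xi)$ and $K(\lambda)^0$ that identify their cyclic generators. The forward map $V(\pmb\xi) \twoheadrightarrow K(\lambda)^0$ is immediate: $k_\lambda$ satisfies the relations~\eqref{eq:defn.rels.wpsi} inside $K(\lambda)^0$, and $x_2 \otimes t$ acts as zero on the evaluation module at $0$, so the extra relations hold trivially there.

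For the reverse surjection $K(\lambda)^0 \twoheadrightarrow V(\pmb\xi)$, my plan is to establish $(\g \otimes t\C[t]) v_{\pmb\xi} = 0$; since $\g \otimes t\C[t]$ is a Lie ideal of $\g[t]$, this will force the $\g[t]$-action on $V(\pmb\xi)$ to factor through evaluation at $0$, exhibiting $V(\pmb\xi)$ as a $\g[t]$-module quotient of $K(\lambda)^0$. The vanishing $(\n_{\scriptscriptstyle(2)}^+ \otimes t\C[t])v_{\pmb\xi} = 0$ follows from the $\n^+[t]$-relation of $W(\lambda)$, and $(\h \otimes t\C[t]) v_{\pmb\xi} = 0$ from the graded structure; applying Garland's identity~\eqref{eq:garland} with $r = 1$ to the extra relation $(x_2 \otimes t)^s y_2^{s+1} v_{\pmb\xi} = 0$ reduces $\mathbf y_2(1, s)$ to the single term $y_2(s)$ and yields $y_2(s) v_{\pmb\xi} = 0$ for all $s \geq 1$. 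The potentially delicate step is the remaining odd generators $x_1(s), y_3(s)$ of $\n_{\scriptscriptstyle(2)}^-$, which do not appear directly in a Garland-type identity; here I would use $[x_3, y_2] = x_1$ and $[y_2, y_1] = y_3$ to write $x_1(s) v_{\pmb\xi} = [x_3, y_2(s)] v_{\pmb\xi} = 0$ and $y_3(s) v_{\pmb\xi} = [y_2(s), y_1] v_{\pmb\xi} = 0$, exploiting that $y_1 \in \g^{-\alpha_1} \subseteq \n_{\scriptscriptstyle(2)}^+$ (so $y_1 v_{\pmb\xi} = 0$) together with $x_3 v_{\pmb\xi} = 0$ and the already-established $y_2(s) v_{\pmb\xi} = 0$.
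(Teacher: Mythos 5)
Your proof is correct and follows essentially the same route as the paper: part (b) by checking the extra relations are forced by $y_2^{\lambda_2+1}w_\lambda=0$, and part (a) by using Garland's identity to kill $y_2(s)$ and then showing all of $\g\otimes t\C[t]$ annihilates $v_{\pmb\xi}$, so that $V(\pmb\xi)$ is an evaluation module at $0$ of a quotient of $K(\lambda)$, with the reverse surjection from the vanishing of $x_2\otimes t$ on $K(\lambda)^0$. Your explicit commutator argument handling $x_1(s)$ and $y_3(s)$ spells out a step the paper's proof leaves implicit, but it is the same argument in substance.
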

\begin{proof}
Recall that $V(\pmb\xi)$ is the quotient of the local Weyl module $W(\lambda)$ by the extra relations \eqref{eq:def.rel.vcsi}.

\begin{enumerate}[(a), leftmargin=*]
\item Assume $\xi = (\lambda_2)$.  Notice that, in this case, $d = 0$.  Thus, we can only choose $k = 0$.  Hence the extra relations are
\[
(x_2 \otimes t)^s y_2^{r+s} = 0 \qquad
\textup{for all} \quad r, s > 0.
\]
In particular, $(x_2 \otimes t)^s y_2^{s+1} = 0$ for all $s \ge 1$.  Using equation~\eqref{eq:garland}, this implies that $(y_2 \otimes t^s) = 0$ for all $s \ge 1$.  Hence, $V(\pmb\xi)$ is the quotient of $M^0$ for some $\g$-module $M$ that is a quotient of $K(\lambda)$.  Now, using equation~\eqref{eq:garland} again, one can check that there are no other extra relations.  Hence $M = K(\lambda)$.

\item In this case, we have $s > \lambda_2-1, \ s > r + \lambda_2 - 2, \ \dotsc, \ s > (\lambda_2-1)r + 1$.  Notice that, since $r \ge 1$, all these inequalities imply $s \ge \lambda_2$ and $r \ge 1$.  However, the relations $(x_2 \otimes t)^a y_2^b = 0$ for $b \ge \lambda_2 + 1$ follow from the usual representation theory of $\lie{sl}(2)$-modules.  That is, these are not actually extra relations.  Hence $V(\pmb\xi) = W (\lambda)$.
\qedhere
\end{enumerate}
\end{proof}

The following lemma gives a refinement of the extra relations \eqref{eq:def.rel.vcsi} (compare it with \cite[Theorems~2.1.1, 3.1.1, 3.2.1]{Murray18} for the case of $\mathfrak{sl}(2)[t]$).

\begin{lemma} \label{lem:vcsi.equi.defn}
Let $\lambda \in P^+$, $\xi = (\xi_0, \dotsc, \xi_d)$ be a partition of $\lambda_2 \in \Z_{\ge0}$, and $\pmb\xi := (\lambda_1, \xi)$.

\begin{enumerate}[(a), leftmargin=*]\itemsep1ex
\item \label{lem:vcsi.equi.defn.a}
The extra relations \eqref{eq:def.rel.vcsi} of $V(\pmb\xi)$ reduce to:
\begin{equation*}
(x_2 \otimes t)^s y_2^{r+s} v_{\pmb\xi} = 0
\quad \textup{for all} \quad
s > kr + \xi_{k+1} + \dotsb + \xi_d, \quad \xi_{k+1} \le r < \xi_k, \quad 0 < r, \quad 0 \le k \le d.
\end{equation*}

\item \label{lem:vcsi.equi.defn.b}
For all $r, s > 0$, the extra relations \eqref{eq:def.rel.vcsi} of $V(\pmb\xi)$ are equivalent to:
\[
\mathbf y_2 (r,s) v_{\pmb\xi} = 0
\quad \textup{for all} \quad
s > kr + \xi_{k+1} + \dotsb + \xi_d, \quad
\xi_{k+1} \le r < \xi_k, \quad
0 \le k \le d.
\]

\item \label{lem:vcsi.equi.defn.c} 
Moreover, the extra relations \eqref{eq:def.rel.vcsi} of $V(\pmb\xi)$ are equivalent to:
\[
\sum_{\substack{b_k + b_{k+1} + \dotsb + b_s = r \\ kb_k + (k+1) b_{k+1} + \dotsb + s b_s = s}} (y_2 \otimes t^k)^{(b_k)} (y_2 \otimes t^{k+1})^{(b_{k+1})} \dotsm (y_2 \otimes t^s)^{(b_s)} v_{\pmb\xi} = 0,
\]
for all $s > kr + \xi_{k+1} + \dotsb + \xi_d$, $\xi_{k+1} \le r < \xi_k$, $0 < r$, and $0 \le k \le d$.
\end{enumerate}
\end{lemma}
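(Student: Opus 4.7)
The plan is to prove parts (a), (b), (c) in sequence, with the Garland identity \eqref{eq:garland} driving (b) and a combinatorial argument for (c).

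For part (a), I would set $f_r(k) := kr + \xi_{k+1} + \dotsb + \xi_d$ (with the convention $\xi_{d+1} := 0$) and note that, for fixed $r > 0$, the collection of relations $\{(x_2 \otimes t)^s y_2^{r+s} v_{\pmb\xi} = 0 : s > f_r(k),\ 0 \le k \le d\}$ is equivalent to the subfamily indexed by $s > \min_k f_r(k)$, since a smaller lower bound on $s$ yields a stronger relation. The discrete derivative $f_r(k+1) - f_r(k) = r - \xi_{k+1}$ is non-decreasing in $k$ (because $\xi_{k+1}$ is non-increasing), so $f_r$ is convex and attains its minimum at the unique $k \in \{0, \dotsc, d\}$ satisfying $\xi_{k+1} \le r < \xi_k$ whenever such a $k$ exists, i.e.\ when $r < \xi_0$. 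For $r \ge \xi_0$, the minimum is at $k = 0$ with $f_r(0) = \lambda_2 - \xi_0$; any $s > \lambda_2 - \xi_0$ together with $r \ge \xi_0$ then forces $r + s > \lambda_2$, so $y_2^{r+s} v_{\pmb\xi} = 0$ already in $W(\lambda)$ by the relation $y_2^{\lambda_2 + 1} w_\lambda = 0$. Hence the relations for $r \ge \xi_0$ are automatic and may be discarded, leaving the statement of (a).

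For part (b), the Garland identity \eqref{eq:garland} gives
\[
x_2(1)^{(s)} y_2^{(r+s)} - (-1)^s \mathbf y_2(r, s) \in \U(\g[t])\bigl(\n^+[t] \oplus t\h[t]\bigr).
\]
Since $V(\pmb\xi)$ is a quotient of $W(\lambda)$ and $w_\lambda$ is annihilated by $\n^+[t]$ and by $h \otimes t^k$ for all $k \ge 1$, the left ideal on the right annihilates $v_{\pmb\xi}$. Hence in $V(\pmb\xi)$ one has $(x_2 \otimes t)^s y_2^{r+s} v_{\pmb\xi} = (-1)^s s!(r+s)!\, \mathbf y_2(r, s) v_{\pmb\xi}$, and since $s!(r+s)! \ne 0$ the two relations are equivalent. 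Combining with (a) yields (b).

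For part (c), I expect this to be the main obstacle. Denote by $\mathbf y_2^{(k)}(r, s)$ the truncated sum appearing in the statement of (c), namely the sub-sum of $\mathbf y_2(r, s)$ over monomials with $b_0 = \dotsb = b_{k-1} = 0$. The pairwise commutativity of the generators $y_2 \otimes t^j$ (valid since $y_2 \in \g_0$ is even and $[y_2, y_2] = 0$) yields the convolution
\[
\mathbf y_2(r, s) = \sum_{m, n \ge 0} \mathbf y_2^{\mathrm{low}}(m, n)\, \mathbf y_2^{(k)}(r - m, s - n),
\]
where $\mathbf y_2^{\mathrm{low}}(m, n) := \sum y_2^{(c_0)} \dotsm y_2(k-1)^{(c_{k-1})}$, summed over $c_0 + \dotsb + c_{k-1} = m$ and $c_1 + 2c_2 + \dotsb + (k-1)c_{k-1} = n$; the $(m, n) = (0, 0)$ term equals $\mathbf y_2^{(k)}(r, s)$. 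Applying this to $v_{\pmb\xi}$ and using (b), the equivalence of (b) and (c) reduces to showing $\mathbf y_2^{(k)}(r - m, s - n) v_{\pmb\xi} = 0$ for each $m \ge 1$ appearing in the sum. I would proceed by induction on $r$: one first checks that the bound $n \le (k-1) m$ (which follows from $\sum c_i = m$ and $n = \sum i c_i$), combined with $s > kr + \xi_{k+1} + \dotsb + \xi_d$, gives $s - n > k(r - m) + \xi_{k+1} + \dotsb + \xi_d$. When $r - m \ge \xi_{k+1}$, the triple $(r - m, s - n, k)$ lies in the range of (c) and the inductive hypothesis applies directly; when $r - m < \xi_{k+1}$, a further reduction is needed, obtained by iterating the same convolution identity at progressively smaller indices $k' < k$ until a valid triple is reached. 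The careful bookkeeping of these nested reductions is the combinatorial heart of the argument, and once in hand it completes the equivalence, proving (c).
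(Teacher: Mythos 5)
Parts (a) and (b) of your proposal are correct. Your convexity argument for (a) is, in substance, exactly the paper's justification (the paper records it as the two inequalities $r + \xi_1 + \dotsb + \xi_d \ge \lambda_2$ for $r \ge \xi_0$ and $r + \xi_{k+1} + \dotsb + \xi_d \ge \xi_k + \dotsb + \xi_d$ for $r \ge \xi_k$); the observation that for fixed $r$ only the minimal bound on $s$ matters, and that the minimum of $k \mapsto kr + \xi_{k+1}+\dotsb+\xi_d$ is attained where $\xi_{k+1} \le r < \xi_k$, is the whole point. Your part (b) is a correct, self-contained derivation from the Garland identity \eqref{eq:garland}: since $v_{\pmb\xi}$ is annihilated by $\n^+[t] \oplus t\h[t]$, one gets $(x_2\otimes t)^s y_2^{r+s} v_{\pmb\xi} = (-1)^s s!\,(r+s)!\,\mathbf y_2(r,s)v_{\pmb\xi}$, as you say. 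The paper instead simply cites \cite[Lemma~2.3]{CV15} here, so your argument is a welcome expansion of that step.

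Part (c) is where there is a genuine gap. The convolution identity and the bound $n \le (k-1)m$ are correct, and they do show that each triple $(r-m, s-n, k)$ with $m \ge 1$ still satisfies $s-n > k(r-m) + \xi_{k+1} + \dotsb + \xi_d$. But two problems remain. First, a directional error: when $r-m < \xi_{k+1}$, the window $\xi_{k'+1} \le r-m < \xi_{k'}$ is attained at some $k' > k$, not at ``progressively smaller indices $k' < k$'' --- since $\xi$ is non-increasing, decreasing $r$ pushes the admissible index upward, and the further reduction must express $\mathbf y_2^{(k)}$ in terms of deeper truncations $\mathbf y_2^{(k')}$ with $k' > k$. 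Second, and more substantively, the induction as set up requires the vanishing of $\mathbf y_2^{(k)}(r',s')v_{\pmb\xi}$ for triples $(r',s',k)$ that lie outside the range stated in (c) (namely $0 < r' < \xi_{k+1}$), so the inductive hypothesis must be strengthened and the nested reductions actually closed; you explicitly defer this ``bookkeeping,'' but it is the entire content of the equivalence. This is precisely what \cite[Proposition~2.6]{CV15} supplies, and what the paper's proof invokes; as written, your part (c) is a plausible outline of that argument rather than a proof.
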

\begin{proof}
The proof of part~\ref{lem:vcsi.equi.defn.a} follows from the fact that $r + \xi_1 + \dotsb + \xi_d \ge \xi_0 + \dotsb + \xi_d = \lambda_2$ for all $r \ge \xi_0$, and the fact that $r + \xi_{k+1} + \dotsb + \xi_d \ge \xi_k + \xi_{k+1} + \dotsb + \xi_d$ for all $r \ge \xi_k$, $k \in \{1, \dotsc, d\}$.  The proofs of parts~\ref{lem:vcsi.equi.defn.b}~and~\ref{lem:vcsi.equi.defn.c} follow from \ref{lem:vcsi.equi.defn.a} and \cite[Lemma~2.3 and Proposition~2.6]{CV15}.
\end{proof}

The next result is our first step towards understanding the structure and constructing basis for Chari-Venkatesh modules (compare it with Theorem~\ref{thm:weyl.dim} and \cite[Proposition~4.6]{Kus18}).

\begin{proposition} \label{prop:vcsi.onto.kacs}
Let $\lambda = (\lambda_1, \lambda_2) \in P^+$, $\xi = (\xi_0, \dotsc, \xi_d)$ be a partition of $\lambda_2$, and $\pmb\xi := (\lambda_1, \xi)$.  For every choice of pairwise-distinct complex numbers $z_0, \dotsc, z_d$ and complex numbers $\kappa_0, \dotsc \kappa_d$ satisfying $\kappa_0 + \dotsb + \kappa_d = \lambda_1$, there is an epimorphism of $\g[t]$-modules
\[
V (\pmb\xi) \twoheadrightarrow K(\kappa_0, \xi_0)^{z_0} * \dotsb * K(\kappa_d, \xi_d)^{z_d}.
\]
\end{proposition}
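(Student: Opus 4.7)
The plan is to construct a surjective $\g[t]$-homomorphism $V(\pmb\xi) \to K(\mu_0)^{z_0} * \dotsb * K(\mu_d)^{z_d}$, where $\mu_i := (\kappa_i, \xi_i)$, by sending $v_{\pmb\xi}$ to $k := k_{\mu_0} * \dotsb * k_{\mu_d}$. Since $k$ is a cyclic vector of the fusion product, surjectivity is automatic; the content is to verify that $k$ satisfies the defining relations of $V(\pmb\xi)$, namely the relations \eqref{eq:defn.rels.wpsi} of $W(\lambda)$ for $\lambda = (\lambda_1, \lambda_2)$, together with the extra relations \eqref{eq:def.rel.vcsi}.

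The relations \eqref{eq:defn.rels.wpsi} on $k$ follow exactly as in the proof of Theorem~\ref{thm:weyl.dim}: using the coproduct of $\U(\g[t])$ and the fact that $y_2^{\xi_i+1} k_{\mu_i} = 0$ in each factor, one obtains $\n^+[t]k = 0$, $hk = \lambda(h)k$ for $h \in \h$, and $y_2^{\lambda_2+1} k = 0$, while the degree argument recalled there gives $(h \otimes t^s) k = 0$ for all $s > 0$. For the extra relations, equation~\eqref{eq:garland} rewrites $(x_2 \otimes t)^s y_2^{r+s}$ as $(-1)^s s!\,(r+s)!\,\mathbf y_2(r,s)$ modulo $\U(\g[t])(\n^+[t] \oplus t\h[t])$, an ideal that annihilates $k$ in the fusion product. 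Hence, by Lemma~\ref{lem:vcsi.equi.defn}\ref{lem:vcsi.equi.defn.b}, it suffices to show that $\mathbf y_2(r,s)\, k = 0$ whenever $s > kr + \xi_{k+1} + \dotsb + \xi_d$ with $0 \le k \le d$.

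To establish this vanishing I would expand $\mathbf y_2(r,s)(k_{\mu_0} \otimes \dotsb \otimes k_{\mu_d})$ in the tensor product via the coproduct of $\U(\g[t])$. Because $y_2$ is even and evaluation at $z_i$ sends $y_2(m)$ to $z_i^m y_2$ on the $i$-th factor, the generating-function identity $\sum_{r,s \ge 0} \mathbf y_2(r,s)\, u^s z^r = \prod_{j \ge 0} \exp(z\, u^j y_2(j))$ yields
\[
\mathbf y_2(r,s)(k_{\mu_0} \otimes \dotsb \otimes k_{\mu_d}) = \sum_{\substack{a_0 + \dotsb + a_d = r \\ 0 \le a_i \le \xi_i}} \Bigl([u^s]\prod_{i=0}^d (1-z_i u)^{-a_i}\Bigr)\bigl(y_2^{(a_0)} k_{\mu_0} \otimes \dotsb \otimes y_2^{(a_d)} k_{\mu_d}\bigr),
\]
the bound $a_i \le \xi_i$ coming from $y_2^{\xi_i+1} k_{\mu_i} = 0$. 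The main obstacle is to show that this element lies in $V^{\le s-1}$, so that its image in the degree-$s$ graded piece vanishes. I anticipate accomplishing this by adapting the Chari--Venkatesh/Kus strategy (\cite[Section~2]{CV15}, \cite[Proposition~4.6]{Kus18}): by Lemma~\ref{lem:vcsi.equi.defn}\ref{lem:vcsi.equi.defn.a} one may restrict to $\xi_{k+1} \le r < \xi_k$, which together with $a_i \le \xi_i$ forces $\sum_{i \le k} a_i \ge r - (\xi_{k+1} + \dotsb + \xi_d)$; a polynomial-interpolation argument in the distinct parameters $z_0, \dotsc, z_d$ then realises each contributing monomial as the action of an operator of strict $t$-degree less than $s$ on $k_{\mu_0} \otimes \dotsb \otimes k_{\mu_d}$, which is feasible because the vectors $y_2^{(a_i)} k_{\mu_i}$ behave exactly as in an $\fsl(2)[t]$-Weyl module.
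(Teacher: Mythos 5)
Your overall strategy coincides with the paper's: send $v_{\pmb\xi}$ to $k := k_{\mu_0} * \dotsb * k_{\mu_d}$ with $\mu_i = (\kappa_i,\xi_i)$, verify the local Weyl relations \eqref{eq:defn.rels.wpsi} on $k$ exactly as in Theorem~\ref{thm:weyl.dim}, and then check the extra relations \eqref{eq:def.rel.vcsi}, which via \eqref{eq:garland} and Lemma~\ref{lem:vcsi.equi.defn}\ref{lem:vcsi.equi.defn.b} reduce to showing $\mathbf y_2(r,s)\,k = 0$ in the fusion product for the stated range of parameters. Up to that point your argument is correct, including the generating-function expansion of $\mathbf y_2(r,s)(k_{\mu_0}\otimes\dotsb\otimes k_{\mu_d})$ over the distinct evaluation points.

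The difference lies in how the final vanishing is closed, and as written your proof stops short of closing it. The expansion by itself proves nothing: the entire content is the assertion that the resulting vector lies in $V^{\le s-1}$, and you only announce that ``a polynomial-interpolation argument'' will deliver this. That estimate is the hard combinatorial core of \cite[Proposition~6.8]{CV15}, and it is not a statement about each contributing monomial separately --- one must control at once the entire weight-$(\lambda - r\alpha_2)$ component of the span of the vectors $y_2(m_1)\dotsm y_2(m_r)(k_{\mu_0}\otimes\dotsb\otimes k_{\mu_d})$. The paper sidesteps redoing this by observing that the relations \eqref{eq:def.rel.vcsi} involve only $x_2\otimes t$ and $y_2$, i.e.\ only the subalgebra $\g_0'[t]\cong\fsl(2)[t]$; since $x_2 k_{\mu_i}=0$, $h_2 k_{\mu_i}=\xi_i k_{\mu_i}$ and $y_2^{\xi_i+1}k_{\mu_i}=0$, one has $\U(\g_0')k_{\mu_i}\cong L_{\fsl}(\xi_i)$, so the $\g_0'[t]$-submodule of the tensor product generated by $k_{\mu_0}\otimes\dotsb\otimes k_{\mu_d}$ is a quotient of $L_{\fsl}(\xi_0)^{z_0}\otimes\dotsb\otimes L_{\fsl}(\xi_d)^{z_d}$, and the required identity $\mathbf y_2(r,s)k=0$ is then literally \cite[Proposition~6.8]{CV15}; the filtrations are compatible because $\U(\g_0'[t])[i]\subseteq\U(\g[t])[i]$. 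I recommend replacing the interpolation sketch with this reduction: it turns your one open step into a citation rather than an adaptation.
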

\begin{proof}
This proof follows from arguments similar to the ones used in the proof of Theorem~\ref{thm:weyl.dim} and \cite[Proposition~6.8]{CV15}.
\end{proof}

\subsection{A few technical lemmas}

In order to show that the epimorphism of Proposition~\ref{prop:vcsi.onto.kacs} is in fact an isomorphism (Theorem~\ref{thm:iso.vcsi}) we   need a few technical lemmas.  

Given a partition $\xi = (\xi_0, \dotsc, \xi_d)$ of $n \in \Z_{\ge0}$, let $|\xi| := d+1$.  For each $c \in \{0, \dotsc, d\}$, define the following partition of $n - 1$:
\[
\varphi_c(\xi)
:= \begin{cases}
(\xi_0, \dotsc, \xi_{c-1}, \xi_c - 1, \xi_{c+1}, \dotsc, \xi_d), & \textup{if $\xi_c > \xi_{c+1}$}, \\
(\xi_0, \dotsc, \xi_{c-1}, \, \xi_{c+1}, \dotsc, \xi_{c+i}, \, \xi_c - 1, \, \xi_{c+i+1}, \dotsc, \xi_d), & \textup{if $\xi_c = \dotsb = \xi_{c+i} > \xi_{c+i+1}$, $i \ge 1$}.
\end{cases}
\]
Now, define $\mathcal J (\xi) := \{(c_1, \dotsc, c_\ell) \mid 0 \le \ell \le |\xi|, \ 0 \le c_1 < \dotsb < c_\ell < |\xi|\}$.  If $\ell = 0$ we denote $(c_1, \dotsc, c_\ell)$ by $\emptyset$ in $\mathcal J(\xi)$.  For each $\pmb c = (c_1, \dotsc, c_\ell) \in \mathcal J(\xi)$ with $\ell > 0$, define a partition of $n - \ell$ by $\pmb c(\xi) := \varphi_{c_1} \circ \dotsb \circ \varphi_{c_\ell}(\xi)$.  For $\pmb c = \emptyset$, define $\emptyset(\xi) := \varphi_\emptyset (\xi) := \xi$. Define also
\[
\mathcal I(\xi) 
= \{(b_1, \dotsc, b_k; \pmb c) \mid \pmb c \in \mathcal J (\xi), \ (b_1, \dotsc, b_k) \in \mathcal J(\pmb c(\xi)) \}.
\]
For each pair $(\pmb b; \pmb c) \in \mathcal I (\xi)$, define $(\pmb b;\pmb c)(\xi) := \pmb b(\pmb c(\xi)) := \varphi_{b_1}\circ \dotsb \circ \varphi_{b_k}\circ \varphi_{c_1}\circ \dotsb \circ \varphi_{c_\ell} (\xi)$.

Given a partition $\xi = (\xi_0, \dotsc, \xi_d)$ of $n \in \Z_{\ge0}$, for each $t \in \{1, \dotsc, d\}$, define
\[
\xi^\sharp_t := (\xi_0, \dotsc, \xi_{t-1})
\qquad \textup{and} \qquad 
\xi^\flat_t := (\xi_t, \dotsc, \xi_d).
\]
Notice that $\xi^\sharp_t$ is a partition of $\xi_0 + \dotsb + \xi_{t-1} < n$ and $\xi^\flat_t$ is a partition of $\xi_t + \dotsb + \xi_d < n$.  Moreover, for each $\pmb c = (c_1, \dotsc, c_\ell) \in \mathcal J (\xi^\sharp_t)$ and $\pmb z = (z_1, \dotsc, z_q) \in \mathcal J(\xi^\flat_t)$, notice that $\pmb c^{\pmb z} := (c_1, \dotsc, c_\ell, z_1+t, \dotsc, z_q+t) \in \mathcal J(\xi)$. In fact, $\ell + q \le |\xi^\sharp_t| + |\xi^\flat_t| = t+(d-t+1) = d+1 = |\xi|$, and $0 \le c_1 < \dotsb < c_\ell < |\xi^\sharp_t| = t \le z_1 + t < \dotsb < z_q + t < |\xi^\flat_t| + t = |\xi|$.  On the other hand, a similar argument shows that for each $\pmb e = (e_1, \dotsc, e_m) \in \mathcal J(\xi)$ there exists $\ell \in \{0, \dotsc, m\}$ such that $e_i < t$ for all $i \le \ell$ and $e_i \ge t$ for all $i > \ell$.  In the case $\ell = 0$, choose $\pmb c = \emptyset$, $\pmb z = \pmb e$, and notice that $\pmb c^{\pmb z} = \pmb e$.  In the case $\ell = m$, choose $\pmb c = \pmb e$, $\pmb z = \emptyset$, and notice that $\pmb c^{\pmb z} = \pmb e$.  Otherwise, choose $\pmb c := (e_1, \dotsc, e_\ell) \in \mathcal J(\xi^\sharp_t)$, $\pmb z := (e_\ell-t, \dotsc, e_m-t) \in \mathcal J(\xi^\flat_t)$, and notice that $\pmb c^{\pmb z} = \pmb e$.  This argument shows that
\begin{equation} \label{eq:dec.J}
\mathcal J(\xi) 
= \{ \pmb c^{\pmb z} \mid \pmb c \in \mathcal J(\xi^\sharp_t), \ \pmb z \in \mathcal J(\xi^\flat_t)\}.
\end{equation}
In this first lemma, we prove the analogous result for $\mathcal I(\xi)$ under a mildly restrictive condition.

\begin{lemma} \label{lem:dec.I}
Let $\xi = (\xi_0, \dotsc, \xi_d)$ be a partition of $n \in \Z_{\ge0}$.  If there exists $t \in \{1, \dotsc, d\}$ such that $\xi_{t-1} > \xi_t$, then $\mathcal I(\xi) = \{ \left( \pmb b^{\pmb u}; \pmb c^{\pmb z} \right) \mid \pmb{c} \in \mathcal J(\xi^\sharp_t), \ \pmb{z} \in \mathcal J(\xi^\flat_t), \  \pmb b \in \mathcal J(\pmb{c}(\xi^\sharp_t)), \ \pmb u \in \mathcal J(\pmb z(\xi^\flat_t)) \}$.
\end{lemma}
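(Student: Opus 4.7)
The plan is to deduce the lemma from equation~\eqref{eq:dec.J} applied twice. Starting from $\mathcal I(\xi) = \{(\pmb e; \pmb f) \mid \pmb f \in \mathcal J(\xi),\ \pmb e \in \mathcal J(\pmb f(\xi))\}$, a first application of~\eqref{eq:dec.J} at $\xi$ (position $t$) rewrites each $\pmb f$ as $\pmb c^{\pmb z}$ with $\pmb c \in \mathcal J(\xi^\sharp_t)$ and $\pmb z \in \mathcal J(\xi^\flat_t)$; a second application to the partition $\pmb c^{\pmb z}(\xi)$ at position $t' := |\pmb c(\xi^\sharp_t)|$ rewrites each $\pmb e \in \mathcal J(\pmb c^{\pmb z}(\xi))$ as $\pmb b^{\pmb u}$ with $\pmb b \in \mathcal J((\pmb c^{\pmb z}(\xi))^\sharp_{t'})$ and $\pmb u \in \mathcal J((\pmb c^{\pmb z}(\xi))^\flat_{t'})$. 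The desired equality then follows provided that $(\pmb c^{\pmb z}(\xi))^\sharp_{t'} = \pmb c(\xi^\sharp_t)$ and $(\pmb c^{\pmb z}(\xi))^\flat_{t'} = \pmb z(\xi^\flat_t)$.

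The whole content of the lemma is therefore concentrated in the sub-claim
\[
\pmb c^{\pmb z}(\xi) = \pmb c(\xi^\sharp_t) \star \pmb z(\xi^\flat_t),
\]
where $\star$ denotes concatenation of partitions. To prove it I would rely on the elementary observation that, for any partition $\mu$, $\varphi_c(\mu)$ agrees with $\mu$ except at the index $c + i$ marking the end of the run at $c$, where the entry is decreased by one; in particular $\varphi_c$ leaves the entries at positions strictly less than $c$ untouched. Unpacking $\pmb c^{\pmb z}(\xi) = \varphi_{c_1}\circ \dotsb \circ \varphi_{c_\ell}\circ \varphi_{z_1+t}\circ \dotsb \circ \varphi_{z_q+t}(\xi)$, I would carry out the flat phase first: each $\varphi_{z_j+t}$ targets a value at most $\xi_t$, which by the hypothesis $\xi_{t-1} > \xi_t$ is strictly less than every entry of the sharp part, so its decrement stays in the flat part. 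An induction on $|\pmb z|$ then yields $\xi^\sharp_t \star \pmb z(\xi^\flat_t)$, a partition in which the strict split at position $t$ is still preserved.

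For the sharp phase, I apply $\varphi_{c_\ell}, \dotsc, \varphi_{c_1}$ in this order. For each $i$, the preceding applications $\varphi_{c_\ell}, \dotsc, \varphi_{c_{i+1}}$ all act at strictly larger indices than $c_i$, so by the elementary observation they leave the entry at position $c_i$ equal to its original value $\xi_{c_i}$; since $\xi_{c_i} \ge \xi_{t-1} > \xi_t$ strictly exceeds every entry of the current flat part $\pmb z(\xi^\flat_t)$, the decrement effected by $\varphi_{c_i}$ stays in the sharp part. A second induction (on $|\pmb c|$) then shows that the sharp phase transforms $\xi^\sharp_t \star \pmb z(\xi^\flat_t)$ into $\pmb c(\xi^\sharp_t) \star \pmb z(\xi^\flat_t)$, establishing the sub-claim.

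The main obstacle is the sharp phase: a single application of $\varphi_{t-1}$ can reduce the last sharp entry from $\xi_{t-1}$ to $\xi_t$, weakening the strict split in the current partition; what rescues the argument is that every subsequent $\varphi_{c_i}$ has $c_i < t-1$, so by the elementary observation its target value remains $\xi_{c_i} \ge \xi_{t-1}$ and thus still strictly exceeds the flat maximum. The degenerate cases $\pmb c(\xi^\sharp_t) = \emptyset$ or $\pmb z(\xi^\flat_t) = \emptyset$ (where~\eqref{eq:dec.J} at $t' = 0$ or $t' = |\pmb c^{\pmb z}(\xi)|$ does not literally apply) are trivial, since $\mathcal J(\emptyset) = \{\emptyset\}$ forces the corresponding factor in the decomposition to be empty.
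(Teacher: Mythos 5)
Your proposal is correct and follows essentially the same route as the paper: apply \eqref{eq:dec.J} twice, once to split the second component and once to split $\mathcal J(\pmb c^{\pmb z}(\xi))$, with the whole content residing in the identity $\pmb c^{\pmb z}(\xi)^\sharp_t = \pmb c(\xi^\sharp_t)$ and $\pmb c^{\pmb z}(\xi)^\flat_t = \pmb z(\xi^\flat_t)$. The only difference is that the paper merely asserts this identity from the hypothesis $\xi_{t-1} > \xi_t$, whereas you supply the two-phase induction (flat operations decrement only positions $\ge t$ since $\varphi_c$ only looks forward; sharp operations decrement only positions $< t$ since their target values stay $\ge \xi_{t-1} > \xi_t$), which is a welcome level of detail; your worry about degenerate cases is moot since $t' = t \in \{1, \dotsc, d\}$ always.
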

\begin{proof}
By \eqref{eq:dec.J}, we can write $\mathcal I (\xi) = \{ (\pmb e; \pmb c^{\pmb z}) \mid \pmb c \in \mathcal J(\xi^\sharp_t), \ \pmb z \in \mathcal J(\xi^\flat_t), \ \pmb e \in \mathcal J(\pmb c^{\pmb z}(\xi)) \}$.  Since, by hypothesis, $1 \le \xi_t < \xi_{t-1}$, we have that $\pmb c^{\pmb z}(\xi)_{t-1} \le \pmb c^{\pmb z}(\xi)_t$.  Hence $\pmb c^{\pmb z}(\xi)^\sharp_t = \pmb c(\xi^\sharp_t)$, $\pmb c^{\pmb z}(\xi)^\flat_t = \pmb z(\xi^\flat_t)$ and $\mathcal J(\pmb c^{\pmb z}(\xi)) = \{ \pmb b^{\pmb u} \mid \pmb b \in \mathcal J(\pmb c(\xi^\sharp_t)), \ \pmb u \in \mathcal J(\pmb z(\xi^\flat_t)) \}$.  The result follows.
\end{proof}

Recall from Section~\ref{s:kac} that we denote by $L_{\lie{sl}}(\nu)$ the finite-dimensional irreducible $\g'_0$-module of highest weight $\nu \in (\h')^*$.  This next lemma will be used in the proof of Theorem~\ref{thm:iso.vcsi}.

\begin{lemma} \label{lem:dim.fil}
For every partition $\xi = (\xi_0, \dotsc, \xi_d)$ of $n \in \Z_{\ge 0}$ and every $\kappa_0, \dotsc, \kappa_d \in \C$, we have
\begin{equation} \label{eq:dim.fil}
\sum_{(\pmb b; \pmb c) \in \mathcal I(\xi)} \prod_{j=0}^{|\pmb b(\pmb c(\xi))|-1} \dim L_{\lie{sl}} (\pmb b(\pmb c(\xi))_j)
= \prod_{j=0}^d \dim K(\kappa_j, \xi_j).
\end{equation}
\end{lemma}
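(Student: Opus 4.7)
The key observation is that by Proposition~\ref{prop:dim.k2.sl12} one has $\dim K(\kappa_j, \xi_j) = 4\xi_j$, so the right-hand side of \eqref{eq:dim.fil} equals $\prod_{j=0}^d 4\xi_j$ and depends only on $\xi$; since $\dim L_{\lie{sl}}(m) = m+1$, the same is true of the left-hand side. I therefore aim to prove, by strong induction on $|\xi| = d+1$, that
\[
f(\xi) := \sum_{(\pmb b; \pmb c) \in \mathcal I(\xi)} \prod_j \bigl(\pmb b(\pmb c(\xi))_j + 1\bigr) = \prod_{j=0}^d 4\xi_j.
\]
The base case $d = 0$ reduces to enumerating the (at most four) elements of $\mathcal I((\xi_0))$ and computing $(\xi_0+1) + \xi_0 + \xi_0 + (\xi_0-1) = 4\xi_0$, where the $(\xi_0 - 1)$-term vanishes when $\xi_0 = 1$.

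For the inductive step, I would split into two cases. Case~A: some $t \in \{1, \dotsc, d\}$ satisfies $\xi_{t-1} > \xi_t$. Lemma~\ref{lem:dec.I} then decomposes $\mathcal I(\xi)$ as pairs $(\pmb b^{\pmb u}; \pmb c^{\pmb z})$; moreover, since $\xi_{t-1} > \xi_t$, after at most two decrements the parts originating from positions $<t$ remain $\geq \xi_{t-1} - 2 \geq \xi_t - 1$, while parts from positions $\geq t$ are $\leq \xi_t$. Consequently the multiset $\pmb b^{\pmb u}(\pmb c^{\pmb z}(\xi))$ is the disjoint union of $\pmb b(\pmb c(\xi^\sharp_t))$ and $\pmb u(\pmb z(\xi^\flat_t))$; the product over its parts factors and yields $f(\xi) = f(\xi^\sharp_t) \cdot f(\xi^\flat_t)$, to which the inductive hypothesis applies.

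Case~B: $\xi = (m^{d+1})$. A direct induction on $\ell = |\pmb c|$, tracing the two branches in the definition of $\varphi_c$, shows $\pmb c(\xi) = (m^{d+1-\ell}, (m-1)^\ell)$ independently of the choice of $\ell$-subset $\pmb c \in \mathcal J(\xi)$. Writing $k_1 := |\pmb b \cap \{0, \dotsc, d-\ell\}|$ and $k_2 := |\pmb b| - k_1$, a similar induction gives $\pmb b(\pmb c(\xi)) = (m^{d+1-\ell-k_1}, (m-1)^{\ell+k_1-k_2}, (m-2)^{k_2})$ (with zero parts removed when $m \in \{1, 2\}$), and the product of $(\text{part}+1)$ equals $(m+1)^{d+1-\ell-k_1} \, m^{\ell+k_1-k_2} \, (m-1)^{k_2}$. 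Summing over $(k_1, k_2)$ with multiplicities $\binom{d+1-\ell}{k_1}\binom{\ell}{k_2}$ and applying \eqref{e:binomial} twice gives $(2m-1)^\ell (2m+1)^{d+1-\ell}$; summing over $\ell$ with $\binom{d+1}{\ell}$ choices for $\pmb c$ and one more application of \eqref{e:binomial} yields $((2m-1) + (2m+1))^{d+1} = (4m)^{d+1}$, as required. The main obstacle will be the careful bookkeeping in Case~B when $m \in \{1, 2\}$ causes parts to vanish; fortunately the formulas $(2m-1)^\ell (2m+1)^{d+1-\ell}$ remain correct in these degenerate cases.
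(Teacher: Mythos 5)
Your proposal is correct and follows essentially the same route as the paper's proof: a direct binomial computation (via \eqref{e:binomial}) for constant partitions $\xi=(m^{d+1})$, and, when some $\xi_{t-1}>\xi_t$, the factorization of $\mathcal I(\xi)$ from Lemma~\ref{lem:dec.I} to split the sum as a product over $\xi^\sharp_t$ and $\xi^\flat_t$ and invoke the inductive hypothesis. The only (cosmetic) difference is the induction variable — you induct on $|\xi|$ whereas the paper inducts on the number of distinct nonzero part values — and your handling of the degenerate cases $m\in\{1,2\}$ matches the paper's convention of discarding zero parts.
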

\begin{proof}
First notice that, if $n = 0$, then $\xi = (0)$, $\mathcal I(\xi) = \{ \emptyset \}$ and both sides of equation~\eqref{eq:dim.fil} are equal to $1$.

Now we let $n > 0$. We proceed by induction on $n_\xi:=\#\{\xi_k \mid \xi_k\neq 0, \  0\leq k \leq d\}$. Suppose that $n_\xi = 1$ and $\xi_0 = 1$, that is, $\xi = (1, \dotsc, 1)$ and $d = n-1$.  In this case, the left side of equation~\eqref{eq:dim.fil} is
\[
\sum_{(b_1, \dotsc, b_k; c_1, \dotsc, c_\ell) \in \mathcal I(\xi)} \prod_{j=0}^{d - \ell - k} \dim L_{\lie{sl}} (1)
= \sum_{(b_1, \dotsc, b_k; c_1, \dotsc, c_\ell) \in \mathcal I(\xi)} 2^{d - \ell - k + 1}
= 4^n,
\]
where the last equality follows from repeatedly applying \eqref{e:binomial}.  Using Proposition~\ref{prop:dim.k2.sl12} and the fact that $d+1 = n$, one sees that $4^n$ equals the right side of equation~\eqref{eq:dim.fil}, $\prod_{j=0}^d \dim K(\kappa_j, 1)$ for any $\kappa_0, \dotsc, \kappa_d \in \C$.

Now, let $\xi = (\xi_0, \dotsc, \xi_0)$ for some $\xi_0 > 1$.  Notice that, for all $\pmb c = (c_1, \dotsc, c_\ell) \in \mathcal J (\xi)$, we have $\pmb c (\xi) = (\xi_0, \dotsc, \xi_0, \xi_0 - 1, \dotsc, \xi_0 - 1)$, which we denote by $(\underline{\xi_0}^{d+1-\ell}, \underline{\xi_0-1}^\ell)$.  In particular, if $\ell = d+1$, then $\pmb c = (0, 1, \dotsc, d)$ and $\pmb c (\xi) = (\xi_0 - 1, \dotsc, \xi_0-1)$.  Otherwise, by equation~\eqref{eq:dec.J}, $\mathcal J(\pmb c(\xi)) = \{ \pmb b^{\pmb u} \mid \pmb b \in \mathcal J(\underline{\xi_0}^{d+1-\ell}), \ \pmb u \in \mathcal J (\underline{\xi_0-1}^\ell) \}$ and
\[
\mathcal I (\xi) = \{ (\pmb b^{\pmb u}; \pmb c) \mid \pmb c = (c_1, \dotsc, c_\ell) \in \mathcal J(\xi), \ \pmb b \in \mathcal J(\underline{\xi_0}^{d+1-\ell}), \ \pmb u \in \mathcal J(\underline{\xi_0-1}^\ell) \}.
\]
Moreover, for each $\pmb c = (c_1, \dotsc, c_\ell) \in \mathcal J(\xi)$, $\pmb b = (b_1, \dotsc, b_k) \in \mathcal J(\underline{\xi_0}^{d+1-\ell})$ and $\pmb u = (u_1, \dotsc, u_p) \in \mathcal J(\underline{\xi_0-1}^\ell)$, we have
\[
\pmb b^{\pmb u} \left(\pmb c (\xi)\right)
= \pmb b ^{\pmb u} (\underline{\xi_0}^{d+1-\ell}, \underline{\xi_0-1}^\ell)
= (\pmb b(\underline{\xi_0}^{d+1-\ell}), \pmb u (\underline{\xi_0-1}^\ell))
= (\underline{\xi_0}^{d+1-\ell-k}, \underline{\xi_0-1}^{\ell-p+k}, \underline{\xi_0-2}^p).
\]
(Notice that, in the case $\xi_0 = 2$, the term $\underline{\xi_0-2}^p$ is zero and thus can be ignored.  Since $\dim(L_{\lie{sl}}(0)) = 1$, the corresponding term in the sequence of equations bellow can also be ignored.)  Hence, the left side of equation~\eqref{eq:dim.fil} is
\begin{align*}
\sum_{(c_1, \dotsc, c_\ell) \in \mathcal J(\xi)} {}&{} \sum_{(b_1, \dotsc, b_k) \in \mathcal J(\underline{\xi_0}^{d+1-\ell})} \, \sum_{(u_1, \dotsc, u_p) \in \mathcal J(\underline{\xi_0-1}^p)} \, \prod_{j=0}^{d} \dim L_{\lie{sl}} (\pmb b^{\pmb u} (\pmb c(\xi))_j) \\
{}&{}= \sum_{\ell = 0}^{d+1} \binom{d+1}{\ell} \sum_{k=0}^{d+1-\ell} \binom{d+1-\ell}{k} \sum_{p=0}^{\ell} \binom{\ell}{p} (\xi_0+1)^{d+1-\ell-k} \ \xi_0^{\ell-p+k} \ (\xi_0-1)^p \\
{}&{}= (4\xi_0)^{d+1},
\end{align*}
where the last equality follows from repeatedly using \eqref{e:binomial}.  Using Proposition~\ref{prop:dim.k2.sl12}, one can check that $(4\xi_0)^{d+1}$ equals $\prod_{j=0}^d \dim K(\kappa_j, \xi_0)$ for any $\kappa_0, \dotsc, \kappa_d \in \C$, the right side of equation~\eqref{eq:dim.fil}.
 
For the inductive step we assume that $n_\xi >1$. Thus $|\xi| > 1$ and there exists $t \in \{1, \dotsc, d\}$ such that $\xi_{t-1} > \xi_t$.  By Lemma~\ref{lem:dec.I}, the left side of equation~\eqref{eq:dim.fil} is
\begin{align*}
\sum_{(\pmb b^{\pmb u}; \pmb c^{\pmb z}) \in \mathcal I(\xi)} {}&{} \prod_{j=0}^{|\pmb b^{\pmb u}(\pmb c^{\pmb z}(\xi))|-1} \dim L_{\lie{sl}} (\pmb b^{\pmb u}(\pmb c^{\pmb z}(\xi))_j) \\
{}&{}= \sum_{\pmb c \in \mathcal J(\xi^\sharp_t)} \sum_{\pmb b \in \mathcal J(\pmb c(\xi^\sharp_t))} \sum_{\pmb z \in \mathcal J(\xi^\flat_t)} \sum_{\pmb u \in \mathcal J(\pmb z (\xi^\flat_t))} \prod_{j=0}^{t-1} \dim L_{\lie{sl}} (\pmb b(\pmb c(\xi^\sharp_t))_j) \prod_{j=t}^{d} \dim L_{\lie{sl}} (\pmb u(\pmb z(\xi^\flat_t))_j) \\
{}&{}= \left( \sum_{\pmb c \in \mathcal J(\xi^\sharp_t)} \, \sum_{\pmb b \in \mathcal J(\pmb c(\xi^\sharp_t))} \, \prod_{j=0}^{t-1} \dim L_{\lie{sl}} (\pmb b(\pmb c(\xi^\sharp_t))_j) \right) \left( \sum_{\pmb z \in \mathcal J(\xi^\flat_t)} \, \sum_{\pmb u \in \mathcal J(\pmb z (\xi^\flat_t))} \, \prod_{j=t}^{d} \dim L_{\lie{sl}} (\pmb u(\pmb z(\xi^\flat_t))_j) \right) \\
{}&{}= \left( \sum_{(\pmb b; \pmb c) \in \mathcal I(\xi^\sharp_t)} \, \prod_{j=0}^{t-1} \dim L_{\lie{sl}} (\pmb b(\pmb c(\xi^\sharp_t))_j) \right) \left( \sum_{(\pmb u; \pmb z) \in \mathcal I(\xi^\flat_t)} \, \prod_{j=t}^{d} \dim L_{\lie{sl}} (\pmb u(\pmb z(\xi^\flat_t))_j) \right),
\end{align*}
which equals $\left(\prod_{j=0}^{t-1} \dim K(\kappa_j, \xi_j)\right) \left(\prod_{j=t}^d \dim K(\kappa_j, \xi_j)\right)$ for any $\kappa_0, \dotsc, \kappa_d \in \C$, by induction hypothesis.
\end{proof}

The next lemma is used as an inductive step in the proof of Theorem~\ref{thm:iso.vcsi}.  But first, we remark a fact that is used in the proof of the lemma.

\begin{remark} \label{rem:exch.cond}
For every $\xi = (\xi_0, \dotsc, \xi_d)$ and $c \in \{0, \dotsc, d\}$, notice that $\varphi_c(\xi) = \varphi_{\hat c} (\xi)$ for some $\hat c \ge c$, namely $\hat c = \min \{ c' \mid c'\ge c, \ \xi_{c'} > \xi_{c'+1} \}$.  Moreover, notice that, when $\xi_c = \xi_{c+1} = \dotsb = \xi_{c+i}$, then there exists no $r > 0$ such that $\xi_{c+j+1} \le r < \xi_{c+j}$, $j \in \{0, \dotsc, i-1\}$.
\end{remark}

\begin{lemma} \label{lem:def.rel.red}
Let $\lambda \in P^+$, $\eta = (\eta_0, \dotsc, \eta_e)$ be a partition of $\lambda_2$, $\pmb \eta := (\lambda_1, \eta)$, $c \in \{0, \dotsc, e\}$, and $V$ be a $\g[t]$-module.  If $v \in V$ satisfies the relations \eqref{eq:defn.rels.wpsi} of $W(\lambda)$, the extra relations \eqref{eq:def.rel.vcsi} of $V(\pmb\eta)$, and
\[
(x_2 \otimes t)^s y_2^{r+s} y_3(c) v = 0
\quad \textup{for all} \quad
s = kr + \eta_{k+1} + \dotsb + \eta_e, \quad \eta_{k+1} \le r < \eta_k, \quad 0 < r, \quad 0 \le k \le c-1,
\]
then $y_3(c) v$ satisfies the extra relations \eqref{eq:def.rel.vcsi} of $V(\lambda_1-1, \varphi_c(\eta))$.
\end{lemma}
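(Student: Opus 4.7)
The approach is to verify that $y_3(c) v$ satisfies the extra relations \eqref{eq:def.rel.vcsi} defining $V(\lambda_1 - 1, \varphi_c(\eta))$. By Remark~\ref{rem:exch.cond}, I may assume $\eta_c > \eta_{c+1}$ (with the convention $\eta_{e+1} = 0$), so that $\varphi_c(\eta) = (\eta_0, \dotsc, \eta_{c-1}, \eta_c - 1, \eta_{c+1}, \dotsc, \eta_e)$. The heart of the argument is the identity
\[
(x_2 \otimes t)^s y_2^{r+s} y_3(c) v = (-1)^{s+1} s!\,(r+s)!\, y_1(c)\, \mathbf y_2(r+1, s)\, v,
\]
which I would establish first. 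Since $y_3 = [y_2, y_1]$ and $y_1 \in \n_{\scriptscriptstyle(2)}^+$, the relation $y_1(c) v = 0$ gives $y_3(c) v = -y_1(c) y_2 v$. An induction using $[y_2, y_1(c)] = y_3(c)$ and the super-commutativity of $y_2$ with $y_3(c)$ then yields $y_2^{r+s} y_3(c) v = -(r+s+1)^{-1}\, y_1(c) y_2^{r+s+1} v$. Because $[x_2(1), y_1(c)] = 0$, the operator $(x_2 \otimes t)^s$ commutes past $y_1(c)$, and the Garland identity \eqref{eq:garland} applied to $v$ (valid since $\n^+[t] v = 0$ and $t\lie h[t] v = 0$) converts $(x_2 \otimes t)^s y_2^{r+s+1} v$ into $(-1)^s s!(r+s+1)!\, \mathbf y_2(r+1, s) v$, from which the displayed formula follows.

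With this identity in hand, the target vanishing becomes $y_1(c)\, \mathbf y_2(r+1, s) v = 0$. By Lemma~\ref{lem:vcsi.equi.defn}\ref{lem:vcsi.equi.defn.a}, I would reduce the $(r,s,k)$-range for $\varphi_c(\eta)$ and split into cases depending on how $k$ compares to $c$. Using that $\varphi_c(\eta)_j = \eta_j$ for $j \ne c$ and $\varphi_c(\eta)_c = \eta_c - 1$, the target bound $s > kr + \varphi_c(\eta)_{k+1} + \dotsb$ becomes $s > kr + \eta_{k+1} + \dotsb + \eta_e$ when $k \ge c$, and $s \ge kr + \eta_{k+1} + \dotsb + \eta_e$ when $k < c$. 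For the \emph{strict} subcase, I would use the corresponding extra relations of $V(\pmb\eta)$ (applied with a possibly shifted index $k'$, e.g. $k' = c$ in place of $k' = c-1$ when $r = \eta_c - 1$) to deduce $(x_2 \otimes t)^s y_2^{r+s} v = 0$. Substituting this back through the commutation identity $y_3(c)(x_2 \otimes t)^s = (x_2 \otimes t)^s y_3(c) - s\, y_1(c+1)(x_2 \otimes t)^{s-1}$ (itself proved by induction from $[x_2(1), y_3(c)] = y_1(c+1)$ together with $[x_2(1), y_1(c+1)] = 0$) reduces the residual piece to showing $y_1(c+1)\, \mathbf y_2(r+1, s-1) v = 0$, which I would attack by further applications of \eqref{eq:com.rel8} and repeated appeals to the extra relations of $V(\pmb\eta)$. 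For the \emph{boundary} subcase with $k < c$, the equality $s = kr + \eta_{k+1} + \dotsb + \eta_e$ together with $\eta_{k+1} \le r < \eta_k$ is \emph{exactly} the content of the additional hypothesis of the lemma, which delivers the required vanishing directly.

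The main obstacle I anticipate is the careful bookkeeping of indices when translating between $\eta$ and $\varphi_c(\eta)$, especially at the transitional values $k = c-1$ and $k = c$ where the ranges of $r$ differ by $\pm 1$ — requiring a non-obvious choice of $k'$ in the extra relations of $V(\pmb\eta)$. The residual calculation of $y_1(c+1)\,\mathbf y_2(r+1, s-1) v$ in the strict case may require iterated use of \eqref{eq:com.rel8} and, in some subcases, the key formula above applied with $(c, s)$ replaced by $(c+1, s-1)$, but no new ideas beyond those outlined should be needed to complete the argument.
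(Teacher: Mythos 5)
Your architecture coincides with the paper's: the extra relations of $V(\lambda_1-1,\varphi_c(\eta))$ for $y_3(c)v$ split into those already implied by the extra relations of $V(\pmb\eta)$ (the strict inequalities) and the boundary equalities $s=kr+\eta_{k+1}+\dotsb+\eta_e$ for $0\le k\le c-1$, which are exactly the additional hypothesis. Your key identity
\[
(x_2\otimes t)^s\,y_2^{r+s}\,y_3(c)\,v=(-1)^{s+1}s!\,(r+s)!\;y_1(c)\,\mathbf y_2(r+1,s)\,v
\]
is correct (it follows from $y_1(c)v=0$, $[x_2(1),y_1(c)]=0$ and \eqref{eq:garland} applied to $v$), and it is essentially the computation the paper carries out when it \emph{applies} the lemma inside the proof of Theorem~\ref{thm:iso.vcsi}. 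Your treatment of the boundary subcase is identical to the paper's.

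The gap is in the strict subcase. The paper settles it by transferring the relations of $V(\pmb\eta)$ to $y_3(c)v$ via $[y_3,y_2]=0$: once the relations are put in the form $\mathbf y_2(r,s)v=0$ of Lemma~\ref{lem:vcsi.equi.defn}\ref{lem:vcsi.equi.defn.b}, they pass to $y_3(c)v$ verbatim because $y_3(c)$ commutes with every $y_2(j)$; the rest of the proof is then a pure comparison of the inequality ranges \eqref{eq:relsV(eta)} and \eqref{eq:relsV(phi(eta))}, with Remark~\ref{rem:exch.cond} absorbing the case $\eta_c=\eta_{c+1}$. You instead commute $y_3(c)$ past $(x_2\otimes t)^s$, which leaves the residual $s\,y_1(c+1)(x_2\otimes t)^{s-1}y_2^{r+s}v\propto y_1(c+1)\,\mathbf y_2(r+1,s-1)v$, and you assert this dies under further applications of \eqref{eq:com.rel8} and the relations of $V(\pmb\eta)$. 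That does not close as stated: \eqref{eq:com.rel8} rewrites it as $-\sum_{q=0}^{s-1}\mathbf y_2(r,s-1-q)\,y_3(c+1+q)\,v$, and the summands in which $s-1-q$ falls at or below the threshold $kr+\eta_{k+1}+\dotsb+\eta_e$ (already $q=s-1$ gives $y_2^{(r)}y_3(c+s)v$, and intermediate $q$ produce terms such as $y_2(2)y_3(1)v+y_2(1)y_3(2)v$ for $\eta=(2,2,1)$) are not annihilated by any relation of $V(\pmb\eta)$ applied directly; eliminating them requires relations at shifted parameters fed back through \eqref{eq:com.rel8}, which is precisely the argument you have not supplied. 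The clean repair is to run the strict subcase in the $\mathbf y_2$-form rather than the $(x_2\otimes t)^s y_2^{r+s}$-form. A smaller point: you cannot simply assume $\eta_c>\eta_{c+1}$, since $c$ is pinned by the element $y_3(c)$; the correct statement (Remark~\ref{rem:exch.cond}) is that the levels $c\le k<\hat c$ contribute no relations, which is how the paper handles it.
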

\begin{proof}
By Lemma~\ref{lem:vcsi.equi.defn}\ref{lem:vcsi.equi.defn.a}, if $v$ satisfies the relations \eqref{eq:defn.rels.wpsi} of $W(\lambda)$ and the extra relations \eqref{eq:def.rel.vcsi} of $V(\pmb\eta)$, then
\[
(x_2 \otimes t)^s y_2^{r+s} v = 0
\quad \textup{for all} \quad
s \ge kr + \eta_{k+1} + \dotsb + \eta_e + 1, \quad \eta_{k+1} \le r < \eta_k, \quad 0 < r, \quad 0 \le k \le e.
\]
Since $[y_3, y_2] = 0$, for all $c \ge 0$ we have:
\begin{equation} \label{eq:relsV(eta)}
(x_2 \otimes t)^s y_2^{r+s} y_3(c) v = 0
\quad \textup{for all} \quad
s \ge kr + \eta_{k+1} + \dotsb + \eta_e + 1, \ \eta_{k+1} \le r < \eta_k, \ 0 < r, \ 0 \le k \le e.
\end{equation}
Now, notice that, if $\eta_c = \dotsb = \eta_{c+i}$, $c+i = \hat c$ as in Remark~\ref{rem:exch.cond}, then: $\varphi_c(\eta) = \varphi_{\hat c}(\eta)$, $\eta_{\hat c} > \eta_{\hat c+1}$, and there exists no $r$ such that $\eta_{\hat c} \le r < \eta_c$.  Hence, $y_3(c)v$ satisfies the extra relations \eqref{eq:def.rel.vcsi} of $V(\lambda_1-1, \varphi_c(\eta))$ if, and only if,
\begin{equation}  \label{eq:relsV(phi(eta))}
(x_2 \otimes t)^s y_2^{r+s} y_3(c) v = 0
\quad \textup{for all} \quad
\begin{cases}
s \ge kr + \eta_{k+1} + \dotsb + \eta_e, & \eta_{k+1} \le r < \eta_k, \quad 0 \le k \le c-1,\\
s \ge lr + \eta_{l+1} + \dotsb + \eta_e + 1, & \eta_{l+1} \le r < \eta_l, \quad \hat c \le l \le e.
\end{cases}
\end{equation}
The result follows by comparing the relations in \eqref{eq:relsV(eta)} with those in \eqref{eq:relsV(phi(eta))}.
\end{proof}

\begin{remark} \label{rem:def.rel.red}
A result analogous to Lemma~\ref{lem:def.rel.red} remains valid if we replace $y_3$ by $x_1$.  In fact, the arguments of the proof are the same.
\end{remark}

Recall the filtration \eqref{eq:the.filtration}, defined in the proof of Proposition~\ref{prop:Weyl_basis}.  We close this subsection with a remark and a lemma regarding this filtration, that will also be used in the proof of Theorem~\ref{thm:iso.vcsi}.

\begin{remark}\label{r:quofilt}
For every $\lambda \in P^+$ and every quotient of $\g_0[t]$-modules $W(\lambda) \twoheadrightarrow V$, the image of the filtration \eqref{eq:the.filtration} $\{F(r)\}_{r \in \Z}$ under this quotient defines a filtration of $V$, as a $\g_0[t]$-module.  More explicitly, if $v \in V$ denotes the image of $w_\lambda$ through the quotient $W(\lambda) \twoheadrightarrow V$, then the image of $F(r)$ is given by $\sum_{i<r} \U(\g_0[t]) m_i v$ for all $r \in \Z$.
\end{remark}

\begin{lemma} \label{lem:filt.clsd}
Let $m_r = (b_1,\cdots, b_k; c_1,\cdots, c_\ell) \in \mathcal F$.  If $r', r'' \in \{0, \dotsc, N\}$ are such that $m_{r'} = m(b_1, \dotsc, b_k; c_2, \dotsc, c_\ell)$ and $m_{r''} = m(b_2, \dotsc, b_k; c_1, \dotsc, c_\ell)$, then:
\[
y_1(c_1) F(r') \subseteq F(r)
\qquad \textup{ and } \qquad
x_3(b_1) F(r'') \subseteq F(r).
\] 
\end{lemma}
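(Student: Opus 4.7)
The plan is to prove both inclusions by the same strategy: first verify the base cases $y_1(c_1) m_i w_\lambda \in F(r)$ (for $i < r'$) and $x_3(b_1) m_i w_\lambda \in F(r)$ (for $i < r''$) by direct commutation, and then extend to arbitrary $\U(\g_0[t])$-translates via induction on the length of $z \in \U(\g_0[t])$. The algebraic inputs for the first statement are $[y_1, x_1] = h_1$, $[y_1, y_3] = 0$, $[h_1, x_1] = 0$, together with $y_1 \in \n_{\scriptscriptstyle(2)}^+$ (so $y_1 w_\lambda = 0$); analogous facts $[x_3, x_1] = 0$, $[x_3, y_3] = h_3$, and $x_3 \in \n_{\scriptscriptstyle(2)}^+$ drive the second statement.

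For the base case of the first claim, write $m_i = x_1(b'_1) \dotsm x_1(b'_p) y_3(c'_1) \dotsm y_3(c'_q)$; since the short-reverse-colex order gives priority to tuples with fewer $\pmb b$-elements, $m_i \prec m_{r'}$ forces $p \le k$. Pushing $y_1(c_1)$ through the $x_1(b'_j)$'s (via $[y_1, x_1] = h_1$, with $h_1$ then commuting freely past the remaining $x_1$'s), then through the $y_3$'s using \eqref{eq:com.rel2}, and finally using $y_1 w_\lambda = 0$ and $h_1(s) w_\lambda = \delta_{s,0} \lambda_1 w_\lambda$, produces a linear combination of monomials each having $p-1 \le k-1 < k$ factors $x_1$. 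Such monomials are strictly $\prec m_r$ and so lie in $F(r)$. The base case of the second claim is analogous: \eqref{eq:com.rel5} expresses $x_3(b_1) m_i w_\lambda$ as a combination of monomials with $p \le k-1 < k$ factors $x_1$, hence in $F(r)$.

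To extend to $y_1(c_1)(zv)$ for $z \in \U(\g_0[t])$ and $v = m_i w_\lambda$, use $y_1(c_1) z = z y_1(c_1) + [y_1(c_1), z]$: the first summand lies in $F(r)$ by the base case combined with $\U(\g_0[t])$-stability of $F(r)$, while the commutator (computed via $[y_1, \h] \subseteq \C y_1$, $[y_1, x_2] = 0$, $[y_1, y_2] = -y_3$) decomposes into sums $z' y_\epsilon(s) z''$ with $z', z'' \in \U(\g_0[t])$ and $y_\epsilon \in \{y_1, y_3\}$. Since $[y_3, \h] \subseteq \C y_3$, $[y_3, x_2] = -y_1$, $[y_3, y_2] = 0$, the induction on the length of $z$ closes jointly for the $y_1$ and $y_3$ actions, reducing the first statement to the parallel base case $y_3(s) m_i w_\lambda \in F(r)$. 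The second statement likewise reduces to $x_1(s) m_i w_\lambda \in F(r)$ via $[x_3, y_2] = x_1$.

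The main obstacle is the parallel base case $y_3(s) m_i w_\lambda \in F(r)$ (and its mirror). Using $[y_3, x_1] = y_2 \in \g_0$, and that $y_2$ commutes with $x_1$ and $y_3$, the result decomposes into (a) a single monomial with $p$ factors $x_1$ and $q+1$ factors $y_3$ obtained by inserting $s$ into $\pmb c'$ and sorting, and (b) terms of the form $y_2(s+b'_j) \cdot m' w_\lambda$ where $m'$ has $p-1$ factors $x_1$. Type (b) lies in $\U(\g_0[t]) m' w_\lambda \subseteq F(r)$ since $p-1 < k$. Type (a) is only problematic in the border case $p = k$, $\pmb b' = (b_1, \dotsc, b_k)$, $q = \ell-1$, $\pmb c' \prec (c_2, \dotsc, c_\ell)$ strict; there I verify combinatorially that, letting $j_0$ be the first position at which the reversed tuple of $\pmb c'$ strictly exceeds that of $(c_2, \dotsc, c_\ell)$, the first strict position of the reversed $\textup{sort}(s, \pmb c')$ versus $(c_\ell, \dotsc, c_1)$ occurs at $\min(j_0, \text{insertion position of } s)$, with strict inequality there (any equality forcing a repeated index and a vanishing term), giving $\textup{sort}(s, \pmb c') \prec (c_1, \dotsc, c_\ell)$.
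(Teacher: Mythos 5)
Your proposal is correct and follows essentially the same route as the paper's proof, which consists of commuting $y_1(c_1)$ (resp.\ $x_3(b_1)$) through $\U(\g_0[t])$ and through the monomials $m_i$ using exactly the bracket relations you list, and observing that every resulting term lands on a monomial lying strictly below $m_r$ in the order $\prec$. You additionally spell out the one genuinely delicate point that the paper leaves implicit — the borderline case where the new index gets inserted into the $y_3$- (resp.\ $x_1$-) string of a monomial with the maximal number of $x_1$-factors — and your combinatorial verification of $\textup{sort}(s,\pmb c') \prec (c_1,\dotsc,c_\ell)$ there (using that the generated degrees satisfy $s \ge c_1$, resp.\ $s \ge b_1$) is correct.
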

\begin{proof}
The first containment follows from the following commutation relations: $y_1 x_2 - x_2 y_1 = [y_1, x_2] = 0$, $y_1 h - h y_1 = [y_1, h] = \alpha_1(h) y_1$ for all $h \in \h$, $y_1 y_2 - y_2 y_1 = [y_1, y_2] = - y_3$, $y_2 y_3 - y_3 y_2 = [y_2, y_3] = 0$, $y_1 x_1 + x_1 y_1 = [y_1, x_1] = h_1$, $h_1 x_1 - x_1 h_1 = [h_1, x_1] = 0$, $y_3 x_1 + x_1 y_3 = [y_3, x_1] = y_2$, $x_1 y_2 - y_2 x_1 = [x_1, y_2] = 0$, $y_1 y_3 + y_3 y_1 = [y_1, y_3] = 0$, and $y_2 h_1 - h_1 y_2 = [y_2, h_1] = -y_2$.

The second containment follows from the following commutation relations: $x_3 x_2 - x_2 x_3 = [x_3, x_2] = 0$, $x_3 h - h x_3 = [x_3, h] = -\alpha_3(h) x_3$ for all $h \in \h$,
$x_3 y_2 - y_2 x_3 = [x_3, y_2] = x_1$, $x_1 y_2 - y_2 x_1 = [x_1, y_2] = 0$, $x_3 x_1 + x_1 x_3 = [x_3, x_1] = 0$, $x_3 y_3 + y_3 x_3 = [x_3, y_3] = h_3$, $h_3 y_3 - y_3 h_3 = [h_3, y_3] = 0$, $x_1 h_3 - h_3 x_1 = [z_1, h_3] = -x_1$ and $x_2 h_3 - h_3 x_2 = [x_2, h_3] = x_2$.
\end{proof}

\subsection{The structure of Chari-Venkatesh modules}

In this subsection we describe the structure of Chari-Venkatesh modules.  We begin with a result relating them to fusion products.

\begin{theorem} \label{thm:iso.vcsi}
Let $\lambda = (\lambda_1, \lambda_2) \in P^+$, $\xi = (\xi_0, \dotsc, \xi_d)$ be a partition of $\lambda_2$, and $\pmb\xi := (\lambda_1, \xi)$.  For every choice of pairwise-distinct complex numbers $z_0, \dotsc, z_d$ and complex numbers $\kappa_0, \dotsc \kappa_d$ satisfying $\kappa_0 + \dotsb + \kappa_d = \lambda_1$, there is an isomorphism of $\g[t]$-modules
\[
V (\pmb\xi) \cong K(\kappa_0, \xi_0)^{z_0} * \dotsb * K(\kappa_d, \xi_d)^{z_d}.
\]
In particular, $\dim V (\pmb\xi) = 4^{d+1} \xi_0 \dotsm \xi_d$.
\end{theorem}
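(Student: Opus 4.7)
The plan is to combine the surjection from Proposition~\ref{prop:vcsi.onto.kacs} with a matching upper bound on $\dim V(\pmb\xi)$. Since $\dim K(\kappa_j, \xi_j) = 4\xi_j$ by Proposition~\ref{prop:dim.k2.sl12}, the fusion product on the right-hand side has dimension $4^{d+1}\xi_0 \cdots \xi_d$, so it suffices to prove $\dim V(\pmb\xi) \leq 4^{d+1}\xi_0 \cdots \xi_d$; the dimension formula and the fact that the surjection is an isomorphism then follow at once.

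To obtain this upper bound, I would start from the spanning set of $W(\lambda)$ given in Proposition~\ref{prop:Weyl_basis}, which also spans $V(\pmb\xi)$, and transfer the filtration $\{F(r)\}_{r \in \Z}$ of the proof of that proposition to $V(\pmb\xi)$ as a $\g_0[t]$-module via Remark~\ref{r:quofilt}. By Lemma~\ref{lem:filt.clsd}, each successive quotient $F(r+1)/F(r)$ is a cyclic $\g_0[t]$-module generated by the image of $m_r v_{\pmb\xi}$. Because the center $\z[t]$ of $\g_0[t]$ acts by a scalar on this cyclic vector and $\g'_0 \cong \fsl(2)$, the structure on $F(r+1)/F(r)$ reduces to that of a cyclic $\fsl(2)[t]$-module.

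The crucial step is to identify the Chari--Venkatesh-type relations satisfied by the image of $m_r v_{\pmb\xi}$. Writing $m_r = x_1(b_1)\cdots x_1(b_k)\, y_3(c_1)\cdots y_3(c_\ell)$, $\pmb b = (b_1,\ldots,b_k)$, $\pmb c = (c_1,\ldots,c_\ell)$, I would iterate Lemma~\ref{lem:def.rel.red} on the $y_3(c_i)$'s and then its $x_1$-analog (Remark~\ref{rem:def.rel.red}) on the $x_1(b_j)$'s, concluding that $m_r v_{\pmb\xi}$ satisfies the Chari--Venkatesh relations for the partition $\pmb b(\pmb c(\xi))$. In particular, vanishing $m_r v_{\pmb\xi}=0$ for $(\pmb b;\pmb c)\notin\mathcal I(\xi)$ exhibits $\cF$ as a subset of $\mathcal I(\xi)$, and \cite[Theorem~5]{CV15} bounds each successive quotient of the filtration by $\prod_j \dim L_{\fsl}(\pmb b(\pmb c(\xi))_j)$. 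Summing over $\mathcal I(\xi)$ and invoking Lemma~\ref{lem:dim.fil} (for any choice of scalars $\kappa_j$) yields $\dim V(\pmb\xi)\le \prod_{j=0}^d \dim K(\kappa_j, \xi_j) = 4^{d+1}\xi_0\cdots\xi_d$, finishing the argument.

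The main obstacle will be the iterated application of Lemma~\ref{lem:def.rel.red}: its hypothesis requires auxiliary vanishings of the form $(x_2\otimes t)^s y_2^{r+s} y_3(c)v = 0$ that depend on the current step of the induction, and an analogous statement for $x_1(b)v$. Verifying that these vanishings are inherited at each stage and that the $y_3$- and $x_1$-phases of the induction commute appropriately is the core technical point; the mechanism is that modulo $F(r)$ the bracket terms produced by the commutation identities \eqref{eq:com.rel1}--\eqref{eq:com.rel9} fall into strictly lower filtration pieces, as captured by Lemma~\ref{lem:filt.clsd}, so no additional relations enter and the $\fsl(2)[t]$ Chari--Venkatesh bound of \cite[Theorem~5]{CV15} genuinely applies to $F(r+1)/F(r)$.
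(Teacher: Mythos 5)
Your proposal follows the paper's own proof essentially step for step: the surjection from Proposition~\ref{prop:vcsi.onto.kacs} plus a matching dimension upper bound obtained by inducing the filtration of Proposition~\ref{prop:Weyl_basis} on $V(\pmb\xi)$, showing by induction (via Lemma~\ref{lem:def.rel.red}, Remark~\ref{rem:def.rel.red}, the relations \eqref{eq:com.rel8}--\eqref{eq:com.rel9} and Lemma~\ref{lem:filt.clsd}) that each graded piece is a quotient of the Chari--Venkatesh $\fsl(2)[t]$-module attached to $\pmb b(\pmb c(\xi))$, and concluding with \cite[Theorem~5]{CV15} and Lemma~\ref{lem:dim.fil}. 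This is exactly the argument in the paper, including your identification of the inheritance of the hypotheses of Lemma~\ref{lem:def.rel.red} through the $y_3$- and $x_1$-phases of the induction as the core technical point.
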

\begin{proof}
First recall from Proposition~\ref{prop:vcsi.onto.kacs} that there is an epimorphism of $\g[t]$-modules $V(\pmb\xi) \twoheadrightarrow K(\kappa_0, \xi_0)^{z_0} * \dotsb * K(\kappa_d, \xi_d)^{z_d}$, and in particular, that $\dim V (\pmb\xi) \ge 4^{d+1} \xi_0 \dotsm \xi_d$, by Proposition~\ref{prop:dim.k2.sl12}.  Thus, it is sufficient to prove that $\dim V (\pmb\xi) \le 4^{d+1} \xi_0 \dotsm \xi_d$.

In order to do that, recall that $V(\pmb\xi)$ is, by definition, a quotient of the local Weyl $\U(\g[t])$-module $W(\lambda)$.  Hence, by Remark \ref{r:quofilt} we have that the filtration $\{F(r)\}_{r \in \Z}$, constructed in the proof of Proposition~\ref{prop:Weyl_basis}, induces a filtration on $V(\pmb \xi)$.  Namely, for each $r \in \Z$, the image of $F(r)$ under the quotient $W(\lambda) \twoheadrightarrow V(\pmb \xi)$ is
\[
E(r) := \sum_{i<r} \U(\g_0[t])v_i, \quad \textup{where} \quad  v_i := m_i v_{\pmb\xi}.
\]
Denote by ${\rm Gr}(V(\pmb\xi))$ the associated graded module. Recall from the proof of Proposition~\ref{prop:Weyl_basis} that, for each $r \in \{0, \dotsc, N\}$, $v_r + E(r) \in {\rm Gr}(V(\pmb\xi))$ satisfies the defining relations of a highest-weight generator of the local Weyl $\U(\lie{sl}(2)[t])$-module of highest weight $\lambda_2 - \wt(m_r)$.  We want to prove that, in fact, for each $r \in \{0, \dotsc, N\}$, $v_r + E(r) \in {\rm Gr}(V(\pmb\xi))$ also satisfies the extra relations of a certain Chari-Venkatesh $\lie{sl}(2)[t]$-module.  (Here, and throughout this proof, we identify $\g'_0$ with $\mathfrak{sl}(2)$ and view each $E(r)$ as and $\mathfrak{sl}(2)[t]$-module via restriction.)

Let $i \in \{0, \dotsc, N\}$, denote $m_i = m(\pmb b; \pmb c) = x_1(b_1) \dotsb x_1(b_k) y_3(c_1) \dotsb y_3(c_\ell)$, and define $\pmb b (\pmb c(\pmb \xi)) := \left( \lambda_1 - \wt(m_i), \, \pmb b(\pmb c(\xi)) \right)$.  We prove by induction on $i$ that $v_i + E(i)$ satisfies the extra relations \eqref{eq:def.rel.vcsi} of $V(\pmb b(\pmb c(\pmb\xi)))$.  For $i=0$, we have $m_0 = 1$ and $v_0 = v_{\pmb\xi}$, which satisfies the extra relations \eqref{eq:def.rel.vcsi} of $V(\pmb\xi)$ by construction.  Now, assume that $i>0$.

Suppose first that $\pmb b = \emptyset$, denote $\varphi_{c_2} \circ \dotsb\circ \varphi_{c_\ell} (\xi) =: \eta = (\eta_0, \dotsc, \eta_e)$ and $\pmb\eta := (\lambda_1-\ell+1, \, \eta)$.  We want to prove that $v_i + E(i) \in {\rm Gr}(V(\pmb\xi))$ satisfies the extra relations \eqref{eq:def.rel.vcsi} of $V(\lambda_1-\ell, \, \varphi_{c_1}(\eta))$.  In order to do that, let $m = m(\pmb b; c_2,\cdots, c_\ell)$ and notice that $m \prec m_i$, as $m_i = y_3(c_1)m$.  Hence $m = m_j$ for some $j < i$, and by induction hypothesis, we have that $v_j + E(j)$ satisfies the extra relations of $V(\pmb\eta)$, that is,
\begin{equation} \label{eq:rels.veta1}
\mathbf y_2(r,s) v_j \in E(j)
\quad \textup{for all} \quad s > kr + \eta_{k+1} + \dotsb + \eta_e, \ 
\eta_{k+1}\leq r <\eta_k, \ 0 < r, \ 0 \leq  k \leq e.
\end{equation}
Since $y_3y_2 - y_2y_3 = [y_3, y_2] = 0$ and $E(j) \subseteq E(i)$, by Lemma~\ref{lem:def.rel.red}, it suffices to show that
\begin{equation}\label{e:IH}
\mathbf y_2(r,s) y_3(c_1) v_j \in E(i)
\ \textup{for all} \ 
s = kr + \eta_{k+1} + \dotsb + \eta_e, \ \eta_{k+1} \le r < \eta_k, \ 0 < r, \ 0 \le k \le c_1-1.
\end{equation}
For this let $0\leq k\leq c_1-1$, $\eta_{k+1}\leq r< \eta_k$ and $s=kr +\eta+{k+1}+\cdots + \eta_e$.  Notice that $s+c_1 > k(r+1)+\eta_{k+1}+\dotsb+\eta_e$ when $r+1 < \eta_k$, and $s+c_1 > (k-1)(r+1) + \eta_k + \eta_{k+1}+ \dotsb + \eta_e$ when $r+1 = \eta_k$.  Moreover if $k = 0$ and $r+1 = \eta_0$, then $\mathbf y_2(r+1,s+c_1)v_j = 0$ by Lemma~\ref{lem:vcsi.equi.defn}\ref{lem:vcsi.equi.defn.a}.  Either way, \eqref{eq:rels.veta1} implies that $\mathbf y_2 (r + 1, s + c_1)v_j \in E(j)$.  Hence, by \eqref{eq:com.rel8} and Lemma~\ref{lem:filt.clsd}, we have:
\[
\sum_{q=0}^{s + c_1} \mathbf y_2 (r, s + c_1 - q) y_3(c_1+ q) v_j
= - y_1(c_1)\mathbf y_2 (r + 1, s + c_1) v_j \in E(i).
\]
By definition of $E(r)$ we have that $\U(\lie g_0[t])y_3(c_1+q)v_j \in E(i)$ if $q \ne 0$ and hence we obtain that $\mathbf y_2 (r, s) y_3(c_1)v_j \in E(i)$, as we wanted.
 
For $\pmb b \ne \emptyset$, denote $\varphi_{b_2} \circ \dotsb \circ \varphi_{b_k} \circ \varphi_{c_1} \circ \dotsb \circ \varphi_{c_\ell} (\xi) =: \eta = (\eta_1, \dotsc, \eta_e)$ and $\pmb\eta := (\lambda_1 - k - \ell + 1, \, \eta)$. Our goal is to prove that $v_i + E(i)$ satisfies the extra relations \eqref{eq:def.rel.vcsi} of $V(\lambda_1 - k - \ell, \, \varphi_{b_1}(\eta))$.  This proof is analogous to the one in the case $\pmb b = \emptyset$.  It is obtained by exchanging $x_1$ for $y_3$ and $c_1$ for $b_1$ everywhere in the above paragraph, then using Remark~\ref{rem:def.rel.red} instead of Lemma~\ref{lem:def.rel.red} and equation~\eqref{eq:com.rel9} instead of \eqref{eq:com.rel8}.

The above argument shows that, for all $i \in \{0, \dotsc, N\}$, the $\U(\g_0[t])$-submodule generated by $m_i v_{\pmb\xi} + E(i) \in {\rm Gr}(V(\pmb\xi))$ is a quotient of a Chari-Venkatesh $\mathfrak{sl}(2)[t]$-module.  Namely, if $m_i = m(\pmb b; \pmb c)$, $(\pmb b; \pmb c) \in \mathcal I(\xi)$, then $\U(\g_0[t]) m_i v_{\pmb\xi} + E(i) \in {\rm Gr}(V(\pmb\xi))$ is a quotient of the Chari-Venkatesh $\mathfrak{sl}(2)[t]$-module associated to the partition $\pmb b(\pmb c(\xi))$.  Moreover, in case $m(\pmb b; \pmb c) \notin \mathcal F$ for some $(\pmb b; \pmb c) \in \mathcal I(\xi)$, we have that $m(\pmb b; \pmb c) v_\xi = 0$.  In this case, the $\U(\g_0[t])$-submodule generated by the image of $m(\pmb b; \pmb c) v_\xi$ in ${\rm Gr}(V(\pmb\xi))$ is also a quotient of the Chari-Venkatesh $\mathfrak{sl}(2)[t]$-module associated to the partition $\pmb b(\pmb c(\xi))$.  Together with \cite[Theorem~5(ii)]{CV15} and Lemma~\ref{lem:dim.fil}, this implies that
\[
\dim V(\pmb\xi)
\leq \sum_{(\pmb b; \pmb c) \in \mathcal I(\xi)} \prod_{j=0}^{|\pmb b(\pmb c(\xi))|-1} \dim L_{\lie{sl}} (\pmb b(\pmb c(\xi))_j)
= 4^{d+1} (\xi_0 \dotsm \xi_d).
\]
The result follows.
\end{proof}

\begin{remark}
Recall from Proposition~\ref{prop:typ.cond} that the subset $\{ \mu \in P^+ \mid \mu \textup{ is typical} \}$ is Zariski open in $P^+$.  Hence, for every $\lambda \in P^+$ and every partition $\xi = (\xi_0, \dotsc, \xi_d)$ of $\lambda_2$, one can choose $\kappa_0, \dotsc, \kappa_d \in \C$ such that $\kappa_0 + \dotsb + \kappa_d = \lambda_1$ and $(\kappa_i, \xi_i) \in P^+$ is typical for all $i \in \{0, \dotsc, d\}$ (or equivalently, we may assume that $K(\kappa_i, \xi_i)$ is irreducible for every $i \in \{0, \dotsc, d\}$). Thus, Theorem~\ref{thm:iso.vcsi} generalizes \cite[Theorem~5(ii)]{CV15}, except in the case $|\xi| = 1$ (where $V(\pmb\xi) \cong {\rm ev}_0 K(\lambda)$, see Lemma~\ref{lem:vcsi.basc}\ref{lem:vcsi.basc.a}).
\end{remark}

\begin{corollary}\label{cor:iso.vcsi}
Let $\lambda = (\lambda_1, \lambda_2) \in P^+$, $\xi = (\xi_0, \dotsc, \xi_d)$ be a partition of $\lambda_2$, and $\pmb\xi := (\lambda_1, \xi)$. For every choice of pairwise-distinct complex numbers $z_0, \dotsc, z_d$ and complex numbers $\kappa_0, \dotsc \kappa_d$ satisfying $\kappa_0 + \dotsb + \kappa_d = \lambda_1$, the following statements hold:

\begin{enumerate}[(a), leftmargin=*, itemsep=1ex]
\item \label{cor:CV.indep}
The fusion product $K(\kappa_0, \xi_0)^{z_0} * \dotsb * K(\kappa_d, \xi_d)^{z_d}$ does not depend on $z_0, \dotsc, z_n$, nor on $\kappa_0, \dotsc \kappa_d$, provided that $\kappa_0 + \dotsb + \kappa_d = \lambda_1$.

\item \label{cor:CV.filt}
As a $\g_0[t]$-module, $V(\pmb \xi)$ admits a filtration, namely $\{E(r)\}_{r \in \Z}$, whose factors are the Chari-Venkatesh modules associated to the partitions $\pmb b(\pmb c(\xi))$, $(\pmb b; \pmb c) \in \mathcal I(\xi)$.

\item \label{cor:CV.char}
The character of $V(\pmb\xi)$ is given by
	\[
\ch V (\pmb\xi) = e^{(\lambda_1, 0)} \frac{\left( \ch \Lambda(\g_{-1})\right)^{d+1}}{\left( e^{(0,\, 1)} - e^{(0,\, -1)} \right)^{d+1}} \prod_{i=0}^{d} \left( e^{(0,\, \xi_i-1)} - e^{(0,\, 1-\xi_i)} \right).
	\]

\item \label{cor:CV.basis}
The Chari-Venkatesh module $V(\pmb \xi)$ admits a basis consisting of the following elements:
\[ y_2(0)^{(i_1)} \dotsm y_2(e-1)^{(i_e)} x_1(b_1) \dotsm x_1(b_k) y_3(c_1) \dotsm y_3(c_\ell) v_{\pmb \xi}, \]
where $(\pmb b; \pmb c) := (b_1, \dotsc, b_k; c_1, \dotsc, c_\ell) \in \mathcal I(\xi)$, $e := |\pmb b(\pmb c(\xi))|$ and $(i_1, \dotsc, i_e) \in \Z_{\ge0}^e$ are such that $p i_{q-1} + (p+1) i_q + 2 \sum_{t=q+1}^e i_t \le \sum_{t=q-p}^e \pmb b(\pmb c(\xi))_t$ for all $q \in \{2, \dotsc, e+1\}$, $p \in \{1, \dotsc, q-1\}$.
\end{enumerate}
\end{corollary}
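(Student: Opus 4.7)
The plan is to extract all four statements from Theorem~\ref{thm:iso.vcsi} together with the constructions assembled in its proof. Part~\ref{cor:CV.indep} is immediate: the module $V(\pmb\xi)$ depends only on $\lambda_1$ and $\xi$, and Theorem~\ref{thm:iso.vcsi} gives $V(\pmb\xi) \cong K(\kappa_0,\xi_0)^{z_0} \ast \dotsb \ast K(\kappa_d,\xi_d)^{z_d}$ for every valid choice of pairwise-distinct $z_i$ and complex $\kappa_i$ with $\kappa_0 + \dotsb + \kappa_d = \lambda_1$, so the fusion product is forced to be independent of these choices.

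For Part~\ref{cor:CV.filt}, I would recycle the $\g_0[t]$-filtration $\{E(r)\}_{r\in\Z}$ built inside the proof of Theorem~\ref{thm:iso.vcsi}. That proof shows each subquotient $E(r+1)/E(r)$, generated by the image of $v_r = m_r v_{\pmb\xi}$ for $m_r = m(\pmb b; \pmb c) \in \mathcal F$, is a surjective image of the Chari-Venkatesh $\g_0[t]$-module attached to the partition $\pmb b(\pmb c(\xi))$ with the appropriate highest weight. Summing dimensions using Lemma~\ref{lem:dim.fil} and comparing against $\dim V(\pmb\xi) = 4^{d+1}\xi_0\dotsm\xi_d$ from Theorem~\ref{thm:iso.vcsi} forces each of these surjections to be an isomorphism.

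For Part~\ref{cor:CV.char}, the fusion product and the ordinary tensor product share the same $\h$-character, so Theorem~\ref{thm:iso.vcsi} yields $\ch V(\pmb\xi) = \prod_{i=0}^d \ch K(\kappa_i,\xi_i)$. I would use Lemma~\ref{lem:kac.iso} and Proposition~\ref{prop:Kac_dist_properties}\ref{prop:Kac_dist_properties_2} to realize each $K(\kappa_i,\xi_i)$ as $\Lambda(\g_{-1}) \otimes L_{\gl}(\kappa_i, \xi_i-1)$ as a $\g_0$-module (allowed thanks to Part~\ref{cor:CV.indep}, which lets us choose all $\kappa_i \ne 0$), compute $\ch L_{\gl}(\kappa_i, \xi_i-1)$ via Weyl's character formula for $\gl(2)$, and reorganize the resulting product into the claimed closed form.

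For Part~\ref{cor:CV.basis}, I would combine Part~\ref{cor:CV.filt} with the explicit basis of Chari-Venkatesh $\fsl(2)[t]$-modules from \cite[Theorem~5(i)]{CV15}: each filtration factor is spanned by the divided-power monomials $y_2(0)^{(i_1)} \dotsm y_2(e-1)^{(i_e)}$ applied to a highest-weight generator, with $(i_1,\dotsc,i_e)$ satisfying the stated inequalities for the partition $\pmb b(\pmb c(\xi))$. Lifting these generators to $m(\pmb b; \pmb c) v_{\pmb\xi}$ and using that the associated graded module has the same dimension as $V(\pmb\xi)$, the union of the lifted vectors yields the desired basis. I expect the main obstacle to be the algebraic manipulation in Part~\ref{cor:CV.char}, where matching $\prod_i \ch K(\kappa_i,\xi_i)$ against the stated expression essentially requires a Weyl-denominator-type identity for $\gl(2)$; the other parts are bookkeeping riding on Theorem~\ref{thm:iso.vcsi}.
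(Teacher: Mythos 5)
Your proposal is correct and follows essentially the same route as the paper: parts (a) and (b) are read off from Theorem~\ref{thm:iso.vcsi} and the filtration $\{E(r)\}$ built in its proof (with the dimension count via Lemma~\ref{lem:dim.fil} upgrading the surjections to isomorphisms), part (c) reduces to $\prod_i \ch K(\kappa_i,\xi_i)$ computed through Lemma~\ref{lem:kac.iso} and the $\g_0$-decomposition of Kac modules, and part (d) combines (b) with the known basis of Chari--Venkatesh $\fsl(2)[t]$-modules from \cite{CV15}. The only cosmetic difference is that in (c) you use the freedom in choosing the $\kappa_i$ to stay in the $\lie b_{\scriptscriptstyle(1)}$ case throughout, whereas the paper keeps both cases and observes that $\ch K_{\lie b_{\scriptscriptstyle(1)}}(\kappa_i,\xi_i-1)=\ch K_{\lie b_{\scriptscriptstyle(3)}}(\kappa_i-1,\xi_i-1)$; both work.
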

\begin{proof}
Parts \ref{cor:CV.indep} and \ref{cor:CV.filt} follow directly from Theorem~\ref{thm:iso.vcsi} and its proof.  In order to prove part \ref{cor:CV.char}, first notice that Theorem~\ref{thm:iso.vcsi} implies that $\ch V(\pmb \xi) = \ch K(\kappa_0, \xi_0) \dotsm \ch K(\kappa_d, \xi_d)$.  Then, recall from Lemma~\ref{lem:kac.iso} that, for each $i \in \{0, \dotsc, d\}$, either $\ch K(\kappa_i, \xi_i) = \ch K_{{\lie b}_{\scriptscriptstyle (1)}}(\kappa_i,\, \xi_i-1)$, or $\ch K(\kappa_i, \xi_i) = \ch K_{{\lie b}_{\scriptscriptstyle (3)}}(\kappa_i-1, \, \xi_i-1)$. To finish the proof of this statement, notice that, for each $i \in \{0, \dotsc, d\}$,
\begin{align*}
\ch K_{{\lie b}_{\scriptscriptstyle (1)}}(\kappa_i, \xi_i-1)
& = \ch L_{\gl}(\kappa_i, \xi_i-1) \ch \Lambda(\g_{-1}) \\
& =  \ch L_{\gl}(\kappa_i-1, \xi_i-1) \ch \Lambda(\g_{1}) 
= \ch K_{{\lie b}_{\scriptscriptstyle (3)}}(\kappa_i-1, \xi_i-1).
\end{align*}
Part~\ref{cor:CV.basis} follows from part~\ref{cor:CV.filt} and \cite[Theorem~5(iii)]{CV15}.
\end{proof}

\begin{remark}\label{rem:fus.prod.typical}
It follows from Corollary~\ref{cor:iso.vcsi}\ref{cor:CV.indep} that  fusion products of finite-dimensional typical irreducible modules do not depend on the choice of the evaluation points $z_0, \dotsc, z_d$.  However, we cannot realize fusion products of finite-dimensional atypical irreducible modules as Chari-Venkatesh modules (and hence as local Weyl modules), as the dimensions would not match.
\end{remark}

\begin{remark}
As a particular case of Corollary~\ref{cor:iso.vcsi}\ref{cor:CV.filt}, the $\g_0'[t]$-submodule generated by $v_{\pmb \xi}$ is isomorphic to the Chari-Venkatesh $\lie{sl}(2)[t]$-module associated to the partition $\xi$.
\end{remark}

\section{Demazure-type and truncated local Weyl modules} \label{s:dem.tr.weyl}

In this last section, we apply the results obtained above to two classes of $\g[t]$-modules which have important counter-parts in the non-super setting.

\subsection{Demazure-type modules}

In \cite{Kus18}, D.~Kus defined analogues of Demazure modules for the Lie superalgebra $\lie{osp}(1|2)[t]$, and called them \emph{Demazure-type modules}.  In this subsection, we give a similar definition for the Lie superalgebra $\lie{sl}(1|2)[t]$.

\begin{definition}
Given $\ell\in \mathbb Z_{\geq 0}$ and $\lambda = (\lambda_1, \lambda_2) \in P^+$, define the \emph{Demazure-type module} $D(\ell, \lambda)$ to be the quotient of $\U(\lie g[t])$ by the left ideal generated by  
\begin{equation} \label{eq:def.rel.dem}
\lie n^+[t], \quad
h(r) - \lambda(h)\delta_{r,0}, \quad
y_2(r)^{\max\{0, \lambda_2-\ell r\}+1}, \qquad 
\textup{for all $h \in \h$, $r \in \Z_{\ge0}$}.
\end{equation}
Notice that $D(\ell, \lambda)$ is generated by the image of $1 \in \U(\g[t])$ onto $D(\ell, \lambda)$, which is a homogeneous even vector and which we denote by $v_{\ell, \lambda}$.
\end{definition}

The next result is a characterization of Demazure-type modules in terms of fusion products of generalized Kac--modules which will be key to following results of this subsection.

\begin{proposition} \label{prop:dem.vcsi}
Let $\ell \in \Z_{>0}$ and $\lambda = (\lambda_1, \lambda_2) \in P^+$.  Let $\lambda_2 = (q-1)\ell + m$, $0 < m \le \ell$.  For every $\kappa_1, \dotsc, \kappa_q \in \C$ such that $\kappa_1 + \dotsb + \kappa_q = \lambda_1$ and pairwise-distinct $z_1, \dotsc, z_q \in \C$, we have an isomorphism of $\lie g[t]$-modules:
\[
D(\ell, \lambda) \cong K(\kappa_1, \ell)^{z_1} * \dotsm * K(\kappa_{q-1}, \ell)^{z_{q-1}} * K(\kappa_q, m)^{z_q}.
\]
\end{proposition}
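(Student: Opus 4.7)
The plan is to identify $D(\ell, \lambda)$ with the Chari-Venkatesh module $V(\pmb\xi)$ for $\pmb\xi := (\lambda_1, \xi)$ and $\xi := (\underbrace{\ell, \dotsc, \ell}_{q-1}, m)$. Indeed, Theorem~\ref{thm:iso.vcsi} applied to this partition already yields $V(\pmb\xi) \cong K(\kappa_1, \ell)^{z_1} \ast \dotsm \ast K(\kappa_{q-1}, \ell)^{z_{q-1}} \ast K(\kappa_q, m)^{z_q}$, so it suffices to construct mutually inverse surjections between $D(\ell, \lambda)$ and $V(\pmb\xi)$. Both modules are quotients of the graded local Weyl module $W(\lambda)$: this is built into the definition of $V(\pmb\xi)$, while for $D(\ell, \lambda)$ it follows from \eqref{eq:def.rel.dem}, noting that the $r = 0$ instance of the Demazure vanishing is exactly $y_2^{\lambda_2+1} v_{\ell,\lambda} = 0$.

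For the surjection $V(\pmb\xi) \twoheadrightarrow D(\ell, \lambda)$, I verify that $v_{\ell, \lambda}$ satisfies the Chari-Venkatesh extra relations \eqref{eq:def.rel.vcsi} associated to $\xi = (\ell^{q-1}, m)$. These extra relations involve only elements of the even semisimple subalgebra $\g'_0[t] \cong \fsl(2)[t]$, so the verification reduces to the following classical $\fsl(2)[t]$-statement: a cyclic highest-weight $\fsl(2)[t]$-module of highest weight $\lambda_2$ whose generator is annihilated by $y_2(r)^{\max\{0, \lambda_2 - \ell r\}+1}$ for every $r \ge 0$ automatically satisfies the CV relations for the partition $(\ell^{q-1}, m)$. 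This Demazure-to-CV equivalence is well known; see \cite[Theorem~5]{CV15} and \cite[Theorems~2.1.1, 3.1.1, 3.2.1]{Murray18}.

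Conversely, for the surjection $D(\ell, \lambda) \twoheadrightarrow V(\pmb\xi)$, I verify that $v_{\pmb\xi}$ satisfies the defining Demazure relations \eqref{eq:def.rel.dem}. Since $V(\pmb\xi)$ is a quotient of $W(\lambda)$, the relations $\n^+[t] v_{\pmb\xi} = 0$ and $h(r) v_{\pmb\xi} = \lambda(h) \delta_{r, 0} v_{\pmb\xi}$ are automatic. The remaining vanishing $y_2(r)^{\max\{0, \lambda_2 - \ell r\}+1} v_{\pmb\xi} = 0$ is again purely an $\fsl(2)[t]$-statement: by the argument in the proof of Theorem~\ref{thm:iso.vcsi}, the $\g'_0[t]$-submodule of $V(\pmb\xi)$ generated by $v_{\pmb\xi}$ is a quotient of the CV $\fsl(2)[t]$-module associated to the partition $\xi = (\ell^{q-1}, m)$, which satisfies the Demazure-type vanishing by the same classical references.

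The main obstacle is the $\fsl(2)[t]$-equivalence between the Demazure presentation $y_2(r)^{\max\{0, \lambda_2 - \ell r\}+1} = 0$ and the Chari-Venkatesh presentation for the rectangular-plus-remainder partition $(\ell^{q-1}, m)$; both directions of this equivalence are the technical heart of the argument, but both are classical and can be cited. Once they are invoked, the super version is essentially formal, since both sets of additional relations live entirely inside the even subalgebra $\g'_0[t]$, while the odd-root data are already encoded in the common base quotient $W(\lambda)$.
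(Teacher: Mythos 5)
Your proof is correct and takes essentially the same route as the paper: the paper likewise reduces, via Theorem~\ref{thm:iso.vcsi}, to the isomorphism $D(\ell,\lambda)\cong V(\pmb\xi)$ with $\xi=(\underline{\ell}^{q-1},m)$ and then defers the two-sided comparison of generators and relations to the argument of \cite[Proposition~5.2]{Kus18}, which is precisely the $\fsl(2)[t]$ Demazure-versus-Chari--Venkatesh equivalence you spell out. The only cosmetic difference is the citation for that classical equivalence: the sharpest reference is \cite[Theorem~2]{CV15} (the one the paper itself invokes in the proof of Proposition~\ref{prop:dem.lw}) rather than Theorem~5 of that paper.
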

\begin{proof}
By Theorem \ref{thm:iso.vcsi} it is equivalent to proving that we have an isomorphism of $\lie g[t]$--modules $D(\ell,\lambda)\cong V(\pmb \xi)$, with $\pmb\xi = (\lambda_1,\xi)$, $\xi = (\underline{\ell}^{q-1}, m)$. The proof follows the same lines as the proof of \cite[Proposition~5.2]{Kus18}.
\end{proof}

The following formulas for dimensions and characters of Demazure-type modules are consequences of the isomorphism given in Proposition~\ref{prop:dem.vcsi}.

\begin{corollary} \label{cor:app.dem}
Let $\ell \in \Z_{>0}$, $\lambda = (\lambda_1, \lambda_2) \in P^+$, and $\lambda_2 = (q-1)\ell + m$, $0 < m \le \ell$.
\begin{enumerate}[(a), leftmargin=*]
\item \label{cor:app.dem.a}
If $\ell = 1$, then $D(\ell, \lambda) \cong W(\lambda)$.

\item \label{cor:app.dem.b}
$\dim D(\ell, \lambda) = 4^q \ell^{q-1} m$.

\item \label{cor:app.dem.c}
$\ch D(\ell, \lambda) = e^{(\lambda_1, 0)} \frac{\left( \ch \Lambda(\g_{-1}) \right)^q}{\left( e^{(0,1)} - e^{(0,-1)} \right)^q} \left( e^{(0,\, \ell-1)} - e^{(0,\, 1-\ell)} \right)^{q-1} \left( e^{(0,\, m-1)} - e^{(0,\, 1-m)} \right)$.
\end{enumerate}
\end{corollary}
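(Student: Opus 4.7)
The plan is to deduce all three parts directly from Proposition~\ref{prop:dem.vcsi}, which realizes $D(\ell,\lambda)$ as a fusion product of generalized Kac modules, combined with the dimension and character formulas already established in Proposition~\ref{prop:dim.k2.sl12}, Theorem~\ref{thm:weyl.dim}, and Corollary~\ref{cor:iso.vcsi}. In particular, since Proposition~\ref{prop:dem.vcsi} is proved by identifying $D(\ell,\lambda)$ with the Chari-Venkatesh module $V(\pmb\xi)$ for $\pmb\xi = (\lambda_1, \xi)$ with $\xi = (\underline{\ell}^{q-1}, m)$, it is convenient to phrase the three corollaries in that form.

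For part~\ref{cor:app.dem.a}, the condition $\ell = 1$ forces $m=1$ and $q=\lambda_2$, so Proposition~\ref{prop:dem.vcsi} gives
\[
D(1,\lambda) \cong K(\kappa_1,1)^{z_1} * \dotsm * K(\kappa_{\lambda_2},1)^{z_{\lambda_2}},
\]
and the right-hand side is identified with $W(\lambda)$ by Theorem~\ref{thm:weyl.dim}. Alternatively, one verifies directly from the defining relations \eqref{eq:def.rel.dem} that, when $\ell=1$, the relation $y_2(r)^{\max\{0,\lambda_2-r\}+1}=0$ is forced by the relations \eqref{eq:defn.rels.wpsi} of $W(\lambda)$ via the Garland identity~\eqref{eq:garland}, so $v_{1,\lambda}$ and $w_\lambda$ satisfy the same relations.

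For part~\ref{cor:app.dem.b}, I would combine Proposition~\ref{prop:dem.vcsi} with the dimension formula for generalized Kac modules from Proposition~\ref{prop:dim.k2.sl12}. Since fusion products have dimension equal to the product of the dimensions of the fusion factors,
\[
\dim D(\ell,\lambda) = \prod_{i=1}^{q-1} \dim K(\kappa_i,\ell) \cdot \dim K(\kappa_q,m) = (4\ell)^{q-1}\cdot (4m) = 4^q\ell^{q-1}m.
\]

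For part~\ref{cor:app.dem.c}, since Proposition~\ref{prop:dem.vcsi} identifies $D(\ell,\lambda)$ with the Chari-Venkatesh module $V(\pmb\xi)$ associated to the partition $\xi = (\underline{\ell}^{q-1}, m)$ of $\lambda_2$, I would apply the character formula of Corollary~\ref{cor:iso.vcsi}\ref{cor:CV.char} with $d+1 = q$, $\xi_i = \ell$ for $0 \le i \le q-2$, and $\xi_{q-1}=m$. Substituting these values into
\[
\ch V(\pmb\xi) = e^{(\lambda_1,0)}\frac{\left(\ch\Lambda(\g_{-1})\right)^{d+1}}{\left(e^{(0,1)}-e^{(0,-1)}\right)^{d+1}}\prod_{i=0}^{d}\left(e^{(0,\xi_i-1)}-e^{(0,1-\xi_i)}\right)
\]
yields precisely the stated character formula. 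None of the three parts should present any obstacle, as all the heavy lifting was done in Proposition~\ref{prop:dem.vcsi} and Corollary~\ref{cor:iso.vcsi}; the corollary is essentially a specialization.
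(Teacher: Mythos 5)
Your proposal is correct and follows essentially the same route as the paper: all three parts are specializations of Proposition~\ref{prop:dem.vcsi}, with (a) via Theorem~\ref{thm:weyl.dim}, (b) via the multiplicativity of dimension over fusion factors together with Proposition~\ref{prop:dim.k2.sl12}, and (c) via a character computation. The only (harmless) difference is in (c), where you specialize the already-established Chari--Venkatesh character formula of Corollary~\ref{cor:iso.vcsi}\ref{cor:CV.char} to the partition $(\underline{\ell}^{q-1}, m)$, whereas the paper redoes the computation from Proposition~\ref{prop:kac.g0-mods} and the $\lie{gl}(2)$ character formula; your shortcut is legitimate since that corollary rests on the same fusion-product realization.
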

\begin{proof}
Part~\ref{cor:app.dem.a} follows directly from Proposition~\ref{prop:dem.vcsi} and Theorem~\ref{thm:weyl.dim}.  Part~\ref{cor:app.dem.b} follows directly from Proposition~\ref{prop:dem.vcsi} and Lemma~\ref{lem:dim.wpsi}\ref{lem:dim.wpsi.b}.  The proof of part~\ref{cor:app.dem.c} is a straight-forward computation using Proposition~\ref{prop:dem.vcsi}, Proposition~\ref{prop:kac.g0-mods} and the following character formulas for finite-dimensional irreducible $\lie{gl}(2)$-modules:
\[
\ch L_{\lie{gl}} (\mu)
= e^{(\mu_1, 0)} \sum_{i = 0}^{\mu_2} e^{(0,\, \mu_2-2i)}
= e^{(\mu_1, 0)} \frac{\left( e^{(0,\, \mu_2+1)} - e^{(0,\, -\mu_2-1)} \right)}{\left( e^{(0,\, 1))} - e^{(0,\, -1)} \right)},
\qquad \mu = (\mu_1, \mu_2) \in P^+.
\qedhere
\]
\end{proof}

This next result characterizes these Demazure-type modules as lowest-weight modules.  This goes back to their original definition as modules for a positive Borel subgroup of an affine Lie algebra generated by an extremal weight vector.  Compare it with \cite[Lemme~26]{Mathieu89}, \cite[Theorem~1]{FL07} and \cite[Proposition~3.6]{Naoi12}.

\begin{proposition} \label{prop:dem.lw}
Let $\lambda = (\lambda_1, \lambda_2) \in P^+$, $\ell \in \Z_{\ge0}$, and set $\mu := (\lambda_1-\lambda_2, \ -\lambda_2) \in \h^*$.  The Demazure-type module $D(\ell, \lambda)$ is generated by a non-zero vector $v_-$ satisfying the following defining relations:
\begin{equation} \label{eq:alt.rel.dem}
\n^-[t] v_- = 0, \quad
h(r)v_- = \delta_{r,0} \mu(h)v_-, \quad
x_2(r)^{\max\{0, \lambda_2 - \ell r\}+1} v_- = 0, \quad\ 
\textup{for all $h \in \h$, $r \in \Z_{\ge0}$}.
\end{equation}
\end{proposition}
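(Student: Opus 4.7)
The plan is to set $v_- := y_2^{\lambda_2}v_{\ell,\lambda}$ and show that it is nonzero, satisfies \eqref{eq:alt.rel.dem}, generates $D(\ell,\lambda)$, and that \eqref{eq:alt.rel.dem} is a set of defining relations. Invoking Proposition~\ref{prop:dem.vcsi} we identify $D(\ell,\lambda)\cong K(\mu_1)^{z_1}*\cdots*K(\mu_q)^{z_q}$ with $\mu_i=(\kappa_i,\xi_i)$, $\xi_i=\ell$ for $i<q$ and $\xi_q=m$, so that $v_{\ell,\lambda}\leftrightarrow k_{\mu_1}*\cdots*k_{\mu_q}$. Since $y_2^{\xi_i+1}k_{\mu_i}=0$ in $K(\mu_i)$, the coproduct expansion of $y_2^{\lambda_2}$ leaves only the term distributing $y_2$ maximally across the factors:
\[
v_- = \binom{\lambda_2}{\xi_1,\dots,\xi_q}\, k^-_{\mu_1}*\cdots*k^-_{\mu_q}, \qquad k^-_{\mu_i}:=y_2^{\xi_i}k_{\mu_i}.
\]
Each $k^-_{\mu_i}$ is the nonzero lowest $\mathfrak{sl}(2)$-weight vector of the summand $L_{\mathfrak{gl}}(\mu_i)\subseteq K(\mu_i)$ provided by Proposition~\ref{prop:kac.g0-mods}\ref{cor:kac.b2}, so $v_-\ne 0$ and has $\h$-weight $\mu$.

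The relations \eqref{eq:alt.rel.dem} are now verified on the underlying tensor product. For $\n^-[t]v_-=0$: $y_2 k^-_{\mu_i}=0$ by the defining relation of $K(\mu_i)$, while $x_1 k^-_{\mu_i}$ and $y_3 k^-_{\mu_i}$ both have $h_2$-weight $-\xi_i-1$, which is strictly below the minimum $h_2$-weight $-\xi_i$ appearing in $K(\mu_i)$ (Proposition~\ref{prop:kac.g0-mods}\ref{cor:kac.b2}), so both vanish. For $h(r)v_-=0$ with $r>0$: the tensor-product expression is a scalar multiple of $v_-\in V^{\le 0}\subseteq V^{\le r-1}$, hence its class in $V[r]$ vanishes in the associated graded. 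For the $x_2(r)$-power relation: the $\g_0'[t]\cong\mathfrak{sl}(2)[t]$-submodule of $D(\ell,\lambda)$ generated by $v_{\ell,\lambda}$ is a cyclic quotient of the classical Demazure module $D_\ell^{\mathrm{cl}}(\lambda_2)$, and the dual presentation of \cite{Mathieu89, FL07, Naoi12} supplies the relation $x_2(r)^{\max\{0,\lambda_2-\ell r\}+1}(y_2^{\lambda_2}v_{\ell,\lambda})=0$ in $D_\ell^{\mathrm{cl}}(\lambda_2)$, which descends to this quotient. Lastly, $v_-$ generates $D(\ell,\lambda)$ because $x_2^{\lambda_2}v_-=(\lambda_2!)^2 v_{\ell,\lambda}$ inside the $(\lambda_2+1)$-dimensional $\mathfrak{sl}(2)$-submodule containing $v_{\ell,\lambda}$.

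For the universal property, let $D'$ denote the universal $\g[t]$-module with generator $v'$ satisfying \eqref{eq:alt.rel.dem}; the preceding steps yield a surjection $\pi\colon D'\twoheadrightarrow D(\ell,\lambda)$, $v'\mapsto v_-$. Setting $v'_+:=x_2^{\lambda_2}v'\in D'$, the fact that $\pi(v'_+)=(\lambda_2!)^2 v_{\ell,\lambda}\ne 0$ gives $v'_+\ne 0$. The key claim is that $v'_+$ satisfies \eqref{eq:def.rel.dem}, yielding an inverse surjection $\phi\colon D(\ell,\lambda)\twoheadrightarrow D'$ via $v_{\ell,\lambda}\mapsto (\lambda_2!)^{-2}v'_+$; since $\pi\circ\phi$ fixes $v_{\ell,\lambda}$ and is therefore the identity on $D(\ell,\lambda)$, both $\pi$ and $\phi$ are isomorphisms. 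The weight relation on $v'_+$ is direct, the $y_2(r)$-relation is the exact mirror of the $x_2(r)$-step above (applied to the $\g_0'[t]$-submodule of $D'$ generated by $v'$), and $\n^+[t]v'_+=0$ reduces, via $[x_2,y_1]=0$ and $[x_2,x_3]=0$, to showing that $y_1(r)v'$, $x_3(r)v'$, and (for $r>0$) $x_2(r)v'$ are each $y_2$-annihilated of $h_2$-weight at most $-\lambda_2+2$ (using the supercommutators $[y_2,y_1]=y_3$, $[y_2,x_3]=-x_1$, $[y_2,x_2(r)]=-h_2(r)$ combined with $\n^-[t]v'=0$ and $h(r)v'=0$), so that $x_2^{\lambda_2}$ overshoots in the $\mathfrak{sl}(2)$-module each of them generates. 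The main obstacle is closing this last step rigorously, since $D'$ is defined only by relations and the $\mathfrak{sl}(2)$-action need not be integrable on vectors outside $\U(\g_0'[t])v'$. The cleanest route is to first bound $\dim D'$ above by producing a spanning set of $\U(\n^+[t])v'$ in the spirit of Proposition~\ref{prop:Weyl_basis}, using Garland-type identities to truncate the admissible powers of $x_2(r)$, $y_1(r)$, $x_3(r)$, and matching the count against $\dim D(\ell,\lambda)=4^q\ell^{q-1}m$ from Corollary~\ref{cor:app.dem}\ref{cor:app.dem.b}; the resulting finite-dimensionality then legitimizes the weight argument above.
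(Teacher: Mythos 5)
Your proposal follows the same overall strategy as the paper's proof: the same candidate $v_-=y_2^{\lambda_2}v_{\ell,\lambda}$, the same observation that $x_2^{\lambda_2}$ recovers (a nonzero multiple of) the original generator, and the same two-surjection scheme for showing that \eqref{eq:alt.rel.dem} are defining relations. Where you differ is in how the relations \eqref{eq:alt.rel.dem} are verified: the paper computes directly inside $D(\ell,\lambda)$ using $\lie{sl}(2)$- and $\lie{sl}(2)[t]$-representation theory, whereas you pass to the fusion-product model of Proposition~\ref{prop:dem.vcsi}, identify $v_-$ with a multinomial multiple of $k^-_{\mu_1}*\dotsb*k^-_{\mu_q}$, and kill $x_1k^-_{\mu_i}$ and $y_3k^-_{\mu_i}$ by the $h_2$-weight bound from Proposition~\ref{prop:kac.g0-mods}\ref{cor:kac.b2}. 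That variant is correct and arguably cleaner, since it gives the nonvanishing of $v_-$ and its weight for free; both arguments use the identical route (Corollary~\ref{cor:iso.vcsi}\ref{cor:CV.filt} together with \cite[Theorem~1]{FL07}) for the $x_2(r)$-power relations.

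The one step you leave open --- that $v'_+=x_2^{\lambda_2}v'$ satisfies \eqref{eq:def.rel.dem} in the abstractly presented module $D'$ --- is exactly the step the paper compresses into ``a similar argument,'' and you are right that one cannot blindly invoke $\lie{sl}(2)$-integrability on vectors such as $y_1(r)v'$. But the dimension count you propose is not needed: the relation $\n^+[t]v'_+=0$ follows from short commutator identities that use only $\n^-[t]v'=0$ and $x_2^{\lambda_2+1}v'=0$. Since $y_1=[x_2,y_3]$ and $[x_2,y_1]=0$, one has $y_1(r)v'=-y_3(r)x_2v'$ and $x_2^{\lambda_2}y_3(r)x_2v'=y_3(r)x_2^{\lambda_2+1}v'+\lambda_2\,y_1(r)x_2^{\lambda_2}v'$, whence $(1+\lambda_2)\,y_1(r)x_2^{\lambda_2}v'=-y_3(r)x_2^{\lambda_2+1}v'=0$; the same trick with $x_3=[x_1,x_2]$ and $[x_3,x_2]=0$ disposes of $x_3(r)v'_+$. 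The remaining relations on $v'_+$ (the $x_2(r)$-, $h(r)$- and $y_2(r)$-power relations) take place entirely inside $\U(\g_0'[t])v'$, which is a quotient of the finite-dimensional $\lie{sl}(2)[t]$-Demazure module by \cite[Theorem~1]{FL07}, so your weight arguments are legitimate there. With that substitution your proof closes and coincides in substance with the paper's.
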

\begin{proof}
Let $v_- := y_2^{\lambda_2} v_{\ell, \lambda}$.  We firstly show that $v_- = y_2^{\lambda_2} v_{\ell, \lambda}$ is a generator of $D(\ell, \lambda)$.  Since $v_{\ell, \lambda}$ is a generator of the $\g'_0$-submodule $\U(\g'_0)v_{\ell, \lambda} \subseteq D(\ell, \lambda)$ and $\g'_0 \cong \lie{sl}(2)$, using representation theory of $\lie{sl}(2)$, one can see that $v_-$ is also a generator of $\U(\g'_0)v_{\ell, \lambda}$.  Furthermore, since $v_{\ell, \lambda}$ is a generator of $D(\ell, \lambda)$ and $v_{\ell, \lambda} \in \U(\g'_0)v_-$, then $v_-$ is also a generator of $D(\ell, \lambda)$.

Secondly, we show that $v_-$ satisfies the relations \eqref{eq:alt.rel.dem}.  Since $[y_3, y_2]= 0$, then $y_3(c) v_- = y_3(c) y_2^{\lambda_2} v_{\ell, \lambda} = y_2^{\lambda_2} y_3(c) v_{\ell, \lambda}$ for all $c \in \Z_{\ge0}$.  Now, using representation theory of $\lie{sl}(2)$ (cf. proof of Proposition~\ref{prop:Weyl_basis}), one can see that $y_2^{\lambda_2} y_3(c) v_{\ell, \lambda} = 0$ for all $c \in \Z_{\ge0}$.  Similarly, since $[x_1, y_2] = 0$, then $x_1(b) v_- =  x_1(b)y_2^{\lambda_2} v_- = y_2^{\lambda_2} x_1(b) v_{\ell, \lambda}$ for all $b \in \Z_{\ge0}$.  Now, using representation theory of $\lie{sl}(2)$ (cf. proof of Proposition~\ref{prop:Weyl_basis}), one can see that $y_2^{\lambda_2} x_1(b) v_{\ell, \lambda} = 0$ for all $b \in \Z_{\ge0}$.  Moreover, $y_2(a) v_- = y_2(a) y_2^{\lambda_2} v_{\ell, \lambda}$, and using representation theory of $\lie{sl}(2)[t]$ (cf. \cite[Section~6]{CP01}), one can see that $y_2(a) y_2^{\lambda_2} v_{\ell, \lambda} = 0$ for all $a \in \Z_{\ge0}$.  This shows that $\n^-[t] v_- = 0$.

Next, we verify that $h(r)v_- = \delta_{r,0} \mu(h)v_-$ for all $h \in \h$ and $r \in \Z_{\ge0}$.  Since $[h_1, y_2] = -y_2$, then $h_1(r) v_- = h_1(r) y_2^{\lambda_2} v_{\ell, \lambda} = y_2^{\lambda_2} h_1(r) v_{\ell, \lambda} - \lambda_2 y_2(r) y_2^{\lambda_2-1} v_{\ell, \lambda}$.  Now, using representation theory of $\lie{sl}(2)[t]$, we see that $y_2^{\lambda_2} h_1(r) v_{\ell, \lambda} - \lambda_2 y_2(r) y_2^{\lambda_2-1} v_{\ell, \lambda} = \delta_{0,r} (\lambda_1-\lambda_2) y_2^{\lambda_2} v_{\ell, \lambda} = \delta_{0,r} \mu(h_1) v_-$ for all $r \in \Z_{\ge0}$.  Similarly, since $[h_2, y_2] = -2y_2$, then $h_2(r) v_- = h_2(r) y_2^{\lambda_2} v_{\ell, \lambda} = y_2^{\lambda_2} h_2(r) v_{\ell, \lambda} - 2\lambda_2 y_2(r) y_2^{\lambda_2-1} v_{\ell, \lambda}$.  Representation theory of $\lie{sl}(2)[t]$, implies that $y_2^{\lambda_2} h_2(r) v_{\ell, \lambda} - 2\lambda_2 y_2(r) y_2^{\lambda_2-1} v_{\ell, \lambda} = -\delta_{0,r}\lambda_2 y_2^{\lambda_2} v_{\ell, \lambda} = \delta_{0,r} \mu(h_2) v_-$ for all $r \in \Z_{\ge0}$.

Finally, we prove that $x_2(r)^{\max\{0, \lambda_2 - \ell r\}+1} v_- = 0$ for all $r \in \Z_{\ge0}$.  First, recall from Corollary~\ref{cor:iso.vcsi}\ref{cor:CV.filt} and Proposition~\ref{prop:dem.vcsi} that the $\g'_0[t]$-submodule $\U(\g'_0[t])v_{\ell, \lambda} \subseteq D(\ell, \lambda)$ is isomorphic to the Chari-Venkatesh $\lie{sl}(2)[t]$-module associated to the partition $(\underline \ell^{q-1}, \, m)$.  Hence, by \cite[Theorem~2]{CV15}, the $\g'_0[t]$-submodule $\U(\g'_0[t]) v_{\ell, \lambda}$ is isomorphic to the Demazure $\lie{sl}(2)[t]$-module associated to $(\ell, \lambda_2)$.  Now, the relation $x_2(r)^{\max\{0, \lambda_2 - \ell r\}+1} v_- = 0$ follows from \cite[Theorem~1]{FL07} (which is the analog of this proposition for simply-laced Lie algebras, in particular, $\lie{sl}(2)$).

Notice that, so far, we have proved that $D(\ell, \lambda)$ is a quotient of the cyclic $\g[t]$-module $D_-$ generated by a cyclic vector $v_-$ satisfying \eqref{eq:alt.rel.dem}.  To finish the proof, notice that a similar argument would show that $x_2^{\lambda_2} v_- \in D'$ satisfies the defining relations \eqref{eq:def.rel.dem} of $v_{\ell, \lambda}$, thus proving that $D_-$ is also a quotient of $D(\ell, \lambda)$.
\end{proof}

\subsection{Truncated local Weyl modules}

In this last subsection, we study analogs of the so-called truncated local Weyl modules, that have been defined for Lie algebras of the form $\lie l \otimes \C[t]/\langle t^n \rangle$ where $\lie l$ is a finite-dimensional simple Lie algebra and $n \in \Z_{>0}$ (see \cite{FMM19} and the references therein).  These modules have shown to be important for their applications, for instance, to a conjecture on Schur positivity of symmetric functions stated in \cite{CFS14}.

\begin{definition}
Given $\lambda \in P^+$ and $N\in \mathbb Z_{\geq 0}$, define the \emph{truncated local Weyl module} $W(\lambda, N)$ to be the quotient of $\U(\lie g[t])$ by the left ideal generated by 
\begin{equation} \label{eq:defn.rels.trweyl}
\lie n^+[t], \quad
h(r) - \lambda(h)\delta_{r,0}, \quad
y_2^{\lambda_2+1}, \quad
\lie g\otimes t^N\mathbb C[t], 
\qquad \textup{for all} \ h \in \lie h, \ r\in \mathbb Z_{\geq 0}.
\end{equation}
Notice that $W(\lambda, N)$ is generated by the image of $1 \in \U(\g[t])$ onto $W(\lambda, N)$, which is a homogeneous even vector and which we denote by $w_{\lambda,N}$.
\end{definition}

As in the Demazure-type modules we also can describe truncated local Weyl modules in terms of fusion products of generalized Kac modules.

\begin{proposition} \label{prop:tr.weyl.iso.fusion}
Let $\lambda = (\lambda_1,\lambda_2)\in P^+$ and $N\in \mathbb Z_{\geq 0}$. Let $\lambda_2 = qN + m$, $0 \le m < N$, be the Euclidean division of $\lambda_2$ by $N$.  For every $\kappa_1, \dotsc, \kappa_N \in \C$ such that $\kappa_1 + \dotsb + \kappa_N = \lambda_1$ and pairwise-distinct $z_1, \dotsc, z_N \in \C$, we have an isomorphism of $\lie g[t]$-modules:
\[
W(\lambda,N) \cong  K(\kappa_1,q)^{z_1} * \dotsm * K(\kappa_{N-m},q)^{z_{N-m}} * K(\kappa_{N-m+1}, q+1)^{z_{N-m+1}} * \dotsm * K(\kappa_N, q+1)^{z_N}.
\]
\end{proposition}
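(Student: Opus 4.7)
The plan is to identify $W(\lambda, N)$ with the Chari-Venkatesh module $V(\pmb\xi)$ associated to the partition $\xi = (\underline{q+1}^m, \underline q^{N-m})$ of $\lambda_2$ (with zero parts dropped in the degenerate case $q = 0$) and the weight $\pmb\xi = (\lambda_1, \xi)$. Once this identification is in place, Theorem~\ref{thm:iso.vcsi} together with Corollary~\ref{cor:iso.vcsi}\ref{cor:CV.indep} yields the fusion product description, since the isomorphism class of the fusion product is insensitive to the points $z_i$, to the splitting of $\lambda_1$ among the $\kappa_i$, and (by a suitable permutation of parameters) to the order of the factors, so the specific arrangement in the statement is immaterial.

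For the surjection $W(\lambda, N) \twoheadrightarrow V(\pmb\xi)$, I would realize $V(\pmb\xi)$ as a fusion product of $N$ generalized Kac modules via Theorem~\ref{thm:iso.vcsi}. Remark~\ref{rmk:vanishing.fusion} then produces $(x \otimes t^s) v_{\pmb\xi} = 0$ for every $x \in \g$ and $s \geq N$, which is precisely the truncation relation in \eqref{eq:defn.rels.trweyl}; the remaining relations of \eqref{eq:defn.rels.trweyl} coincide with those of $W(\lambda)$ and are automatic because $V(\pmb\xi)$ is a quotient of $W(\lambda)$. The universal property of $W(\lambda, N)$ then produces the surjection.

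For the reverse surjection $V(\pmb\xi) \twoheadrightarrow W(\lambda, N)$, I would check that $w_{\lambda, N}$ satisfies the Chari-Venkatesh relations \eqref{eq:def.rel.vcsi} for $\pmb\xi$. By Lemma~\ref{lem:vcsi.equi.defn}\ref{lem:vcsi.equi.defn.b}, this reduces to showing $\mathbf y_2(r, s) w_{\lambda, N} = 0$ whenever $\xi_{k+1} \leq r < \xi_k$ and $s > k r + \xi_{k+1} + \dotsb + \xi_{N-1}$. The truncation kills every summand of $\mathbf y_2(r, s)$ containing a factor $y_2(j)$ with $j \geq N$; among the surviving summands the defining constraints force $s = b_1 + 2 b_2 + \dotsb + (N-1) b_{N-1} \leq (N-1)(b_0 + \dotsb + b_{N-1}) = (N-1) r$, so $\mathbf y_2(r, s) w_{\lambda, N} = 0$ whenever $s > (N-1) r$. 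It then remains to verify $k r + \xi_{k+1} + \dotsb + \xi_{N-1} \geq (N-1) r$ for admissible $(k, r)$: for $k \geq m$ one has $\xi_{k+1} = \dotsb = \xi_{N-1} = q$ and $r < q$, giving $(N-1-k) q > (N-1-k) r$; for $k = m - 1$ the only admissible $r$ is $q$ and both sides equal $(N-1) q$, so the strict CV hypothesis still yields $s > (N-1) r$; for $k < m - 1$ the equality $\xi_k = \xi_{k+1}$ leaves no admissible $r$ by Remark~\ref{rem:exch.cond}.

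Combining the two surjections gives $W(\lambda, N) \cong V(\pmb\xi)$, and Theorem~\ref{thm:iso.vcsi} concludes. The main obstacle is this combinatorial comparison, particularly the boundary case $k = m - 1$, where the Chari-Venkatesh bound and the truncation-induced bound $(N-1) r$ agree exactly, so the argument hinges on the strictness of both inequalities rather than on slack in either of them.
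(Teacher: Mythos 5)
Your proposal is correct and follows essentially the same route as the paper: both identify $W(\lambda,N)$ with the Chari--Venkatesh module $V(\pmb\xi)$ for $\xi=(\underline{q+1}^m,\underline{q}^{N-m})$, obtain one surjection from Theorem~\ref{thm:iso.vcsi} together with Remark~\ref{rmk:vanishing.fusion}, and obtain the other by checking via Lemma~\ref{lem:vcsi.equi.defn} that every monomial of $\mathbf y_2(r,s)$ surviving the truncation forces $s\le (N-1)r$. Your case analysis of the admissible pairs $(k,r)$ is exactly what the paper compresses into the single reduction ``$r\le q$ and $s>(N-1)r$''; the only blemish is that for $k=N-1$ your strict inequality $(N-1-k)q>(N-1-k)r$ degenerates to $0>0$, though the required non-strict bound $kr+\xi_{k+1}+\dotsb+\xi_{N-1}\ge (N-1)r$ still holds there with equality.
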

\begin{proof}
By Theorem \ref{thm:iso.vcsi}, it suffices to prove that $W(\lambda,N)$ is isomorphic to $V(\pmb\xi)$, for $\pmb\xi = (\lambda_1; \xi)$ and $\xi = (\underline{q+1}^m, \underline{q}^{N-m})$.  It is clear that $W(\lambda,N)$ is a quotient of $W(\lambda)$. Moreover, using Remark~\ref{rmk:vanishing.fusion}, one can check that $V(\pmb\xi)$ is a quotient of $W(\lambda,N)$.  Therefore, it suffices to show that $w_{\lambda,N}$ satisfies the extra relations \eqref{eq:def.rel.vcsi} of $V(\pmb\xi)$, for $\pmb\xi = (\lambda_1; \xi)$ and $\xi = (\underline{q+1}^m, \underline{q}^{N-m})$.  In light of Lemma~\ref{lem:vcsi.equi.defn}\ref{lem:vcsi.equi.defn.b}, it suffices to prove that, for every $r \le q$ and $s > (N-1)r$, we have $\mathbf y_2(r,s) v_{\pmb \xi} = 0$.  Now, using Lemma~\ref{lem:vcsi.equi.defn}\ref{lem:vcsi.equi.defn.c}, it suffices to prove that if $b_0 + \dotsb + b_s = r$, $r \le q$, $b_1 + 2b_2 + \dotsb + sb_s = s$ and $s > (N-1)r$, then there exists $p \ge N$ such that $b_p \ne 0$.  If $b_p = 0$ for all $p \ge N$, then
\[
(N-1)r< s = b_1 + 2b_2 + \cdots + (N-1)b_{N-1} \leq (N-1)r,
\]
as  $b_p \le r$ for all $p \in \{0, \dotsc, r\}$.  This is a contradiction, and thus the result follows.
\end{proof}

The following corollary is a consequence of the previous isomorphism and the proof is similar to that of Corollary~\ref{cor:app.dem}. 

\begin{corollary} \label{cor:app.tr.weyl}
Let $\lambda = (\lambda_1,\lambda_2)\in P^+$ and $N\in \mathbb Z_{\geq 0}$. Let $\lambda_2 = qN + m$, $0 \le m < N$, be the Euclidean division of $\lambda_2$ by $N$.
\begin{enumerate}[(a), leftmargin=3.75ex]
\item \label{cor:app.tr.weyl.a}
If $N \ge \lambda_2$, then $W(\lambda, N) \cong W(\lambda)$.

\item \label{cor:app.tr.weyl.b}
If $N < \lambda_2$, then $\dim W(\lambda, N) = 4^N q^{N-m} (q+1)^m$.

\item \label{cor:app.tr.weyl.c}
If $N < \lambda_2$, then $\ch W(\lambda, N) = e^{(\lambda_1, 0)} \frac{\left( \ch \Lambda (\g_{-1}) \right)^N}{\left( e^{(0,1)} - e^{(0,-1)} \right)^N} \left( e^{(0,\, q)} - e^{(0,\, -q)} \right)^{N-m} \left( e^{(0,\, q-1)} - e^{(0,\, 1-q)} \right)^m$.
\end{enumerate}
\end{corollary}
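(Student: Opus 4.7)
The plan is to derive all three parts from Proposition~\ref{prop:tr.weyl.iso.fusion}, mirroring the proof of Corollary~\ref{cor:app.dem}.

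For part~\ref{cor:app.tr.weyl.a}, I would separate the cases $N > \lambda_2$ and $N = \lambda_2$. When $N > \lambda_2$, the Euclidean division gives $q = 0$ and $m = \lambda_2$, so Proposition~\ref{prop:tr.weyl.iso.fusion} realizes $W(\lambda, N)$ as
\[
K(\kappa_1, 0)^{z_1} * \dotsm * K(\kappa_{N-\lambda_2}, 0)^{z_{N-\lambda_2}} * K(\kappa_{N-\lambda_2+1}, 1)^{z_{N-\lambda_2+1}} * \dotsm * K(\kappa_N, 1)^{z_N}.
\]
Since each $K(\kappa_i, 0) \cong \C$ by Proposition~\ref{prop:typ.cond}, these trivial factors may be discarded; then choosing the remaining $\kappa_i$'s to sum to $\lambda_1$, Theorem~\ref{thm:weyl.dim} identifies the result with $W(\lambda)$. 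In the remaining case $N = \lambda_2$, we have $q = 1$ and $m = 0$, and the fusion product is $K(\kappa_1, 1)^{z_1} * \dotsm * K(\kappa_N, 1)^{z_N} \cong W(\lambda)$ directly by Theorem~\ref{thm:weyl.dim}.

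For part~\ref{cor:app.tr.weyl.b}, Proposition~\ref{prop:dim.k2.sl12} gives $\dim K(\kappa, \xi) = 4\xi$ for every $\xi \ge 1$, so Proposition~\ref{prop:tr.weyl.iso.fusion} immediately yields $\dim W(\lambda, N) = (4q)^{N-m}(4(q+1))^m = 4^N q^{N-m}(q+1)^m$.

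For part~\ref{cor:app.tr.weyl.c}, I would use that the character of a fusion product equals the product of the characters of its tensor factors, and then apply Proposition~\ref{prop:kac.g0-mods} together with the $\gl(2)$-Weyl character formula recalled in the proof of Corollary~\ref{cor:app.dem}\ref{cor:app.dem.c}; this is a direct computation entirely analogous to the one carried out there. No step poses a substantive obstacle beyond carefully keeping track of the $N - m$ factors of size $q$ and the $m$ factors of size $q + 1$ produced by the Euclidean division.
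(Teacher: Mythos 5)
Your proposal is correct and follows essentially the same route as the paper: all three parts are read off from Proposition~\ref{prop:tr.weyl.iso.fusion}, with part~\ref{cor:app.tr.weyl.a} via Theorem~\ref{thm:weyl.dim}, part~\ref{cor:app.tr.weyl.b} via the dimension formula $\dim K(\kappa,\xi)=4\xi$ of Proposition~\ref{prop:dim.k2.sl12}, and part~\ref{cor:app.tr.weyl.c} via Proposition~\ref{prop:kac.g0-mods} and the $\gl(2)$ character formula, exactly as in Corollary~\ref{cor:app.dem}. Your explicit case split in~\ref{cor:app.tr.weyl.a} (setting $\kappa_i=0$ on the factors $K(\kappa_i,0)\cong\C$ and discarding them) is a slightly more careful elaboration of the same argument.
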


Notice that, since $\left( \g \otimes t^N\C[t] \right) W(\lambda, N) = 0$, then $W(\lambda, N)$ admits a structure of $\g \otimes \C[t]/\langle t^N \rangle$-module.  And conversely, every $\g \otimes \C[t]/\langle t^N \rangle$-module $V$ admits a structure of $\g[t]$-module where $\left( \g \otimes t^N\C[t] \right) V = 0$.  We close the paper with a result characterizing truncated local Weyl modules as universal objects within a certain category of $\g \otimes \C[t]/\langle t^N \rangle$-modules.  Compare it with \cite[Section~3]{CFK10}, \cite[Proposition~4.13]{CLS19} and \cite[Proposition~8.2]{BCM19}.

\begin{proposition} \label{prop:tr.weyl.univ}
Let $\lambda \in P^+$ and $N \in \Z_{\ge0}$.  If $V$ is a finite-dimensional graded highest-weight $\g \otimes \C[t] / \langle t^N \rangle$-module generated by a highest-weight vector $v_+$ of weight $\lambda$, then there exists a surjective homomorphism of $\g[t]$-modules:
\[
W(\lambda, N) \twoheadrightarrow V
\qquad \textup{satisfying} \qquad
w_{\lambda, N} \mapsto v_+.
\]
\end{proposition}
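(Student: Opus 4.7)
The plan is to verify that the given highest-weight vector $v_+ \in V$ satisfies all four families of defining relations \eqref{eq:defn.rels.trweyl} of $W(\lambda, N)$, and then invoke the universal property of the resulting cyclic quotient of $\U(\g[t])$.

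First I would address the relations coming directly from the hypothesis. The relation $\n^+[t] v_+ = 0$ is part of the definition of $v_+$ as a $\g[t]$-highest-weight vector. The condition $h v_+ = \lambda(h) v_+$ for $h \in \h$ is also immediate from $v_+$ being of weight $\lambda$. To obtain the stronger relation $h(r) v_+ = 0$ for $r > 0$, one uses the grading: we may assume $v_+$ is homogeneous in degree $0$ (after a grading shift), and since $h(r)$ shifts the degree by $r$, any vector in $\C v_+$ that is also in $V[r]$ with $r > 0$ must be zero. Thus $\h[t]$ acts on $v_+$ via the functional $\lambda$ extended by $0$ on $\h \otimes t\C[t]$, giving $h(r) v_+ = \lambda(h)\delta_{r,0} v_+$.

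Next, the relation $(\g \otimes t^N\C[t]) v_+ = 0$ is automatic, since $V$ is assumed to be a module for $\g \otimes \C[t]/\langle t^N \rangle$, so the whole ideal $\g \otimes t^N\C[t]$ acts as zero on $V$.

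The remaining relation $y_2^{\lambda_2+1} v_+ = 0$ is where finite-dimensionality enters. The $\g'_0$-submodule $\U(\g'_0) v_+ \subseteq V$ is finite-dimensional; moreover, $x_2 v_+ = 0$ (as $x_2 \in \n^+$) and $h_2 v_+ = \lambda_2 v_+$ with $\lambda_2 \in \Z_{\ge 0}$ (since $\lambda \in P^+$). Standard $\fsl(2)$-representation theory then yields $y_2^{\lambda_2+1} v_+ = 0$.

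Having verified that $v_+$ satisfies the four families of relations \eqref{eq:defn.rels.trweyl} of the generator $w_{\lambda, N}$, the universal property of the cyclic quotient gives a well-defined homomorphism of $\g[t]$-modules $W(\lambda, N) \to V$ mapping $w_{\lambda, N} \mapsto v_+$, and this is surjective because $v_+$ generates $V$. The only point requiring genuine thought is the interpretation of the phrase \emph{graded highest-weight}: once one argues via the grading that $\h[t]$ must annihilate $v_+$ in positive degrees, the rest of the verification is essentially routine.
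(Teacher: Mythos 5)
Your proposal is correct and follows essentially the same route as the paper: verify that $v_+$ satisfies the four families of defining relations \eqref{eq:defn.rels.trweyl} (the grading forcing $h(r)v_+=0$ for $r>0$, the truncation relation coming for free from the $\g\otimes\C[t]/\langle t^N\rangle$-module structure, and $y_2^{\lambda_2+1}v_+=0$ from finite-dimensionality and $\fsl(2)$-theory), then invoke the universal property of the cyclic quotient. The paper simply defers to the analogous argument in \cite[Proposition~4.13]{CLS19}; your write-up makes the degree argument for $h(r)v_+$ slightly more explicit, but the content is the same.
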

\begin{proof}
This proof is similar to the proof of \cite[Proposition~4.13]{CLS19}.
\end{proof}

\subsection*{Acknowledgment}  The authors would like to thank V.~Chari for some enlightening conversations and an anonymous referee for the thorough reading of our paper. L. C. was supported by Fapemig Grant (APQ-02768-21) and by CNPq Grant (402449/2021-5).


\end{document}